\documentclass[11pt]{article}
\usepackage{amsmath,amsfonts,amssymb,amsthm,enumerate,graphicx}
\usepackage{tikz,authblk}
\usepackage{tikz-cd}
\usepackage{subfig}
\usepackage{url}
\usepackage{float}
\usepackage{easyReview}

\textheight9.2in \textwidth6.1in \hoffset-0.6in \voffset-0.6in

\newtheorem{theorem}{{\bf Theorem}}[section]
\newtheorem{result}{{\bf Result}}[section]
\newtheorem{claim}{{\bf Claim}}[section]

\newtheorem{lemma}[theorem]{{\bf Lemma}}

\newtheorem{proposition}[theorem]{{\bf Proposition}}

\newtheorem{question}[theorem]{{\bf Question}}

\newcommand{\vg}{\vspace{2cm}}

\newcommand{\ol}{\overline}

\newcommand{\ZZ}{ \ensuremath{\mathbb{Z}}}

\begin{document}

\title{Semi\mbox{-}equivelar toroidal maps and their $k$-semiregular covers}

	\author[1] {Arnab Kundu}
 	\author[2] { Dipendu Maity}
 	\affil[1, 2]{Department of Science and Mathematics,
 		Indian Institute of Information Technology Guwahati, Bongora, Assam-781\,015, India.\linebreak
 		\{arnab.kundu, dipendu\}@iiitg.ac.in/\{kunduarnab1998, dipendumaity\}@gmail.com.}

\date{\today}

\maketitle

\begin{abstract}
If the face\mbox{-}cycles at all the vertices in a map are of same type then the map is called semi\mbox{-}equivelar. In particular, it is called equivelar if the face-cycles contain same type of faces. A map is semiregular (or almost regular) if it has as few flag orbits as possible for its type. A map is $k$-regular if it is equivelar and the number of flag orbits of the map $k$ under the automorphism group. In particular, if $k =1$, its called regular. A map is $k$-semiregular if it contains more number of flags as compared to its type with the number of flags orbits $k$. Drach et al. \cite{drach:2019} have proved that every semi-equivelar toroidal map has a finite unique minimal semiregular cover. In this article, we show the bounds of flag orbits of semi-equivelar toroidal maps, i.e., there exists $k$ for each type such that every semi-equivelar map is $\ell$-uniform for some $\ell \le k$. We show that none of the Archimedean types on the torus is semiregular, i.e., for each type, there exists a map whose number of flag orbits is more than its type. We also prove that if a semi-equivelar map is $m$-semiregular then it has a finite index $t$-semiregular minimal cover for $t \le m$. We also show the existence and classification of $n$ sheeted $k$-semiregular maps for some $k$ of semi-equivelar toroidal maps for each $n \in \mathbb{N}$. 
\end{abstract}

\noindent {\small {\em MSC 2010\,:} 52C20, 52B70, 51M20, 57M60.

\noindent {\em Keywords: Semi-equivelar toroidal maps; $k$\mbox{-}semiregular maps; Classification of covering maps} }

\section{Introduction}

A {\em map M} is an embedding of a graph {\em G} on a surface {\em S} such that the closure of components of $S \setminus G$, called the {\em faces} of $M$, are homeomorphic to $2$-discs. A map $M$ is said to be a {\em polyhedral map} if the intersection of any two distinct faces is either empty, a common vertex, or a common edge. Here map means a polyhedral map.

The $face\mbox{-}cycle$ $C_u$ of a vertex $u$ (also called the {\em vertex-figure} at $u$) in a map is the ordered sequence of faces incident to $u$.
So, $C_u$ is of the form $(F_{1,1}\mbox{-}\cdots \mbox{-}F_{1,n_1})\mbox{-}\cdots\mbox{-}(F_{k,1}\mbox{-}$ $\cdots \mbox{-}F_{k,n_k})\mbox{-}F_{1,1}$, where $F_{i,\ell}$ is a $p_i$-gon for $1\leq \ell \leq n_i$, $1\leq i \leq k$, $p_r\neq p_{r+1}$ for $1\leq r\leq k-1$ and $p_n\neq p_1$. The types of the faces in $C_u$ defines the type of $C_u$. In this case, the type of face-cycle($u$) is $[p_1^{n_1}, \dots, p_k^{n_k}]$, is called vertex type of $u$. A map $M$ is called {\em semi-equivelar} (\cite{DM2018}, we are including the same definition for the sake of completeness) if $C_u$ and $C_v$ are of same type for all $u, v \in V(X)$. More precisely, there exist integers $p_1, \dots, p_k\geq 3$ and $n_1, \dots, n_k\geq 1$, $p_i\neq p_{i+1}$ (addition in the suffix is modulo $k$) such that $C_u$ is of the form as above for all $u\in V(X)$. In such a case, $X$ is called a semi-equivelar map of type (or vertex type) $[p_1^{n_1}, \dots, p_k^{n_k}]$ (or, a map of type $[p_1^{n_1}, \dots, p_k^{n_k}]$).

Two maps of fixed type on the torus are {\em isomorphic} if there exists a {\em homeomorphism} of the torus which maps vertices to vertices, edges to edges, faces to faces and preserves incidents. More precisely,
if we consider two polyhedral complexes $M_{1}$ and $M_{2}$ then an isomorphism to be a map $f ~:~ M_{1}\rightarrow M_{2}$ such that $f|_{V(M_{1})} : V(M_{1}) \rightarrow V(M_{2})$ is a bijection and $f(\sigma)$ is a cell in $M_{2}$ if and only if $\sigma$ is a cell in $M_{1}$. In particular, if $M_1 = M_2$, then $f$ is called an $automorphism$. The \emph{automorphism group $Aut(M)$} of $M$ is the group consisting of automorphisms of $M$.  

Throughout the last few decades there have been many results about maps and semi-equivelar maps that are highly symmetric. In particular, there has been
recent interest in the study of discrete objects using combinatorial, geometric, and algebraic approaches, with the topic of symmetries of maps receiving a lot of interest. There is a great history of work surrounding maps on the Euclidean plane $\mathbb{R}^2$ and  the $2$-dimensional torus.

An {\em Archimedean} tiling of the plane $\mathbb{R}^2$ is a tiling of $\mathbb{R}^2$ by regular polygons such that all the vertices of the tiling are of same type.
Gr\"{u}nbaum and Shephard \cite{GS1977} showed that there are exactly eleven types of Archimedean tilings on the plane (see Section \ref{sec:examples}). These types are $[3^6]$, $[4^4]$, $[6^3]$, $[3^4,6^1]$, $[3^3,4^2]$,  $[3^2,4^1,3^1,4^1]$, $[3^1,6^1,3^1,6^1]$, $[3^1,4^1,6^1,4^1]$, $[3^1,12^2]$,  $[4^1,6^1,12^1]$, $[4^1,8^2]$. Clearly, these tilings are also semi-equivelar on $\mathbb{R}^2$. But, there are semi-equivelar maps on $\mathbb{R}^2$ which are not (not isomorphic to) Archimedean tilings. In fact, there exists $[p^q]$ equivelar maps on $\mathbb{R}^2$ whenever $1/p+1/q < 1/2$ (e.g., \cite{CM1957}, \cite{FT1965}). We know from \cite{DU2005, DM2017, DM2018}  that the Archimedean tilings $E_i$ $(1 \le i \le 11)$ (in Section \ref{sec:examples}) are unique as semi-equivelar maps. That is, we have the following. 

\begin{proposition} \label{theo:plane}
Let $E_1, \dots, E_{11}$ be the Archimedean tilings on the plane given in Section $\ref{sec:examples}$. Let $X$ be a semi-equivelar map on the plane. If the type of $X$ is same as the type of $E_i$, for some $i\leq 11$, then $X\cong E_i$. In particular, $X$ is vertex-transitive.
\end{proposition}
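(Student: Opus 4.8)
The plan is to prove this by a combinatorial developing (or unique\mbox{-}continuation) argument, exploiting the fact that $\mathbb{R}^2$ is simply connected. Fix $i$ and write $E = E_i$. Since $X$ and $E$ are both polyhedral maps of the same type $[p_1^{n_1},\dots,p_k^{n_k}]$ on the plane, the polyhedrality condition forces the link of every vertex in each of them to be a single cycle carrying the same cyclic pattern of face\mbox{-}sizes. I would begin by choosing a base flag $\Phi_0 = (v_0, e_0, F_0)$ of $X$ and a flag $\Phi_0' = (v_0', e_0', F_0')$ of $E$ for which $F_0$ and $F_0'$ have the same number of sides, and then declare $f(\Phi_0) = \Phi_0'$; equivalently, I fix a base face of $X$ together with an orientation\mbox{-}preserving identification of it with a congruent face of $E$.

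Next I would spell out a local extension rule. Given a face $F$ of $X$ whose identification with a face $f(F)$ of $E$ has already been fixed, and an edge $e$ of $F$, the face $F'$ of $X$ sharing $e$ is a polygon whose number of sides is forced by the vertex type: reading the face\mbox{-}cycle at an endpoint of $e$ determines which face\mbox{-}size follows $F$ across $e$. I map $F'$ to the unique face $f(F')$ of $E$ adjacent to $f(F)$ across the edge $f(e)$, matching vertices and edges compatibly with the already\mbox{-}defined identification on the shared edge. Because the face\mbox{-}cycle type is the same constant word $[p_1^{n_1},\dots,p_k^{n_k}]$ at every vertex of both maps, this rule never meets a size obstruction: the local configuration around $e$ in $X$ is isomorphic to the one around $f(e)$ in $E$. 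Choosing a spanning tree of the dual graph of $X$ and propagating the identification along it produces a candidate bijection $f$ on faces, edges, and vertices.

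The main obstacle is well\mbox{-}definedness: a priori $f$ could depend on the chosen path of adjacent faces from $F_0$ to a given face. I would resolve this using simple connectivity. The dual graph of $X$, regarded as a graph embedded in $\mathbb{R}^2$, has the property that every closed edge\mbox{-}path is null\mbox{-}homotopic, hence is a product of elementary loops each encircling a single vertex. Thus it suffices to verify that propagating the identification once around any vertex $u$ of $X$ returns the original identification. This is exactly guaranteed by the semi\mbox{-}equivelar hypothesis: the sequence of faces and edges met while turning around $u$ realizes the cyclic word $[p_1^{n_1},\dots,p_k^{n_k}]$, identical to the word met while turning around $f(u)$ in $E$, so a full turn acts as the identity. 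Consequently $f$ is well defined, and by construction it is a bijection sending cells to cells and preserving incidences, i.e., an isomorphism $X \cong E$.

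Finally, for vertex\mbox{-}transitivity, it is classical that each Archimedean tiling $E_i$ has a symmetry group acting transitively on its vertices, all vertices being congruent by the definition of Archimedean. Since the isomorphism $X \cong E_i$ transports $\mathrm{Aut}(E_i)$ to $\mathrm{Aut}(X)$, the map $X$ inherits a vertex\mbox{-}transitive automorphism group. The step I expect to require the most care is the reduction of global consistency to the single\mbox{-}vertex check, together with the verification that the vertex figures genuinely match as oriented cyclic sequences; here the internal symmetries that a vertex type may possess (such as the rotational symmetry of the figure in type $[3^6]$) must be handled uniformly, so that the choice of base identification propagates without ambiguity across all eleven types.
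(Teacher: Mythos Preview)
The paper does not give its own proof of this proposition; it quotes it as a known result from \cite{DU2005, DM2017, DM2018}. So there is no ``paper's proof'' to compare against, only the cited literature.

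Your developing/monodromy outline is the right framework, but the step you label ``never meets a size obstruction'' hides the real content of the proposition, and as written it is not justified. Concretely: when you cross an edge $e$ of a face $F$ and claim that ``reading the face\mbox{-}cycle at an endpoint of $e$ determines which face\mbox{-}size follows $F$ across $e$'', this is simply false for several of the eleven types. In $[3^3,4^2]$ a $3$ is followed by a $3$ at two positions of the cyclic word and by a $4$ at one; in $[3^4,6^1]$ a $3$ has a $3$ on one side three times and a $6$ once; in $[3^2,4^1,3^1,4^1]$ a $3$ is adjacent sometimes to a $3$ and sometimes to a $4$. So knowing $|F|$ and even the pair $(|F|,|F'|)$ at the old endpoint does not pin down the linearised face\mbox{-}cycle at the \emph{new} endpoint $w$ after an $s_0$\mbox{-}move: two different rotations of the cyclic word begin with the same pair, and nothing in your propagation so far excludes the wrong one occurring in $X$ while the right one occurs in $E$.

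What the cited papers actually do at this point is a short type\mbox{-}by\mbox{-}type rigidity analysis: for each of the problematic types one shows, by looking at two or three neighbouring vertices simultaneously, that the combinatorial configuration is forced (for instance, that in type $[3^3,4^2]$ the squares must organise into parallel strips separated by strips of triangles). Your closing paragraph correctly senses that something delicate happens here, but you attribute it to ``internal symmetries of the vertex type'' and to the choice of base identification. The difficulty is not the base choice; it recurs at every $s_0$\mbox{-}move to a fresh vertex, and resolving it is exactly the case analysis you have not supplied. Once that analysis is in place, your simple\mbox{-}connectivity argument for global well\mbox{-}definedness and the transport of vertex\mbox{-}transitivity from $E_i$ to $X$ are fine.
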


As a consequence of Proposition \ref{theo:plane} we have 

\begin{proposition} \label{propo-1}
All semi-equivelar maps on the torus are the quotient of an Archimedean tiling on the plane by a discrete subgroup of the automorphism group of the tiling.
\end{proposition}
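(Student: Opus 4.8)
The plan is to pass to the universal cover of the torus and then invoke Proposition \ref{theo:plane}. Let $X$ be a semi-equivelar map of type $[p_1^{n_1},\dots,p_k^{n_k}]$ on the torus $\mathbb{T}^2$, and let $\pi : \mathbb{R}^2 \to \mathbb{T}^2$ be the universal covering map, whose deck transformation group $\Gamma$ is isomorphic to $\pi_1(\mathbb{T}^2)\cong \mathbb{Z}^2$ and acts freely, properly discontinuously, and cocompactly. First I would pull back the cell structure of $X$ along $\pi$ to obtain a decomposition $\tilde X := \pi^{-1}(X)$ of the plane. Since $\pi$ is a local homeomorphism and each face of $X$ is a closed $2$-disc, every vertex, edge, and face of $X$ lifts to homeomorphic copies in $\tilde X$, and $\pi$ restricts to an isomorphism from the face-cycle at any lift $\tilde u$ of a vertex $u$ onto the face-cycle at $u$. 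One checks that $\tilde X$ is again polyhedral (distinct lifted faces can only meet in a common vertex or edge, the intersection pattern becoming sparser under unwrapping), so $\tilde X$ is a semi-equivelar map on $\mathbb{R}^2$ of the \emph{same} type $[p_1^{n_1},\dots,p_k^{n_k}]$.

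The next step is to show that this type must be one of the eleven Archimedean types $E_1,\dots,E_{11}$. Because $X$ is finite, a standard incidence count gives
\[
E = \tfrac{1}{2}\,V\sum_{i=1}^{k} n_i, \qquad F = V\sum_{i=1}^{k}\frac{n_i}{p_i},
\]
where $V,E,F$ are the numbers of vertices, edges, and faces of $X$. Substituting into the Euler relation $V - E + F = \chi(\mathbb{T}^2) = 0$ and dividing by $V \neq 0$ yields the flatness condition
\[
\sum_{i=1}^{k} n_i\,\frac{p_i-2}{p_i} = 2 .
\]
The solutions of this equation in integers $p_i \ge 3$, $n_i \ge 1$ (subject to $p_i \neq p_{i+1}$) are precisely the eleven Archimedean types listed in the introduction, so $\tilde X$ has the type of some $E_i$. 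Proposition \ref{theo:plane} then upgrades this to $\tilde X \cong E_i$.

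Finally, I would transport the deck action to the tiling. Each $\gamma \in \Gamma$ is a homeomorphism of $\mathbb{R}^2$ commuting with $\pi$, hence permutes the lifted cells and restricts to an automorphism of $\tilde X \cong E_i$; being orientation-preserving and fixed-point-free, these act by translations. This identifies $\Gamma$ with a discrete subgroup of $\mathrm{Aut}(E_i)$ acting freely, properly discontinuously, and cocompactly. Since $\pi$ induces a bijection between the $\Gamma$-orbits of cells of $\tilde X$ and the cells of $X$, we obtain $X \cong \tilde X/\Gamma \cong E_i/\Gamma$, exhibiting $X$ as the quotient of an Archimedean tiling by a discrete subgroup of its automorphism group.

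I expect the main obstacle to be the verification that $\tilde X$ is genuinely a \emph{polyhedral} semi-equivelar map, i.e.\ that lifting neither creates illegitimate face intersections nor alters any vertex-figure, together with the clean identification of $\Gamma$ with a group of combinatorial (not merely topological) automorphisms of $E_i$. The enumeration of the solutions of the flatness equation is routine and is already encoded in the Gr\"unbaum--Shephard list quoted above, so it contributes little difficulty beyond bookkeeping.
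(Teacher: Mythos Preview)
Your overall strategy---lift $X$ to the universal cover $\tilde X$, invoke Proposition~\ref{theo:plane} to identify $\tilde X$ with some $E_i$, and then recognise the deck group as a discrete subgroup of $\mathrm{Aut}(E_i)$---is exactly the argument the paper has in mind when it says Proposition~\ref{propo-1} is ``a consequence of Proposition~\ref{theo:plane}.'' The paper gives no further details, so your write-up is a faithful expansion of that one-line deduction.

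There is, however, a genuine gap in your middle step. The claim that the integer solutions of
\[
\sum_{i=1}^{k} n_i\,\frac{p_i-2}{p_i} = 2
\]
are \emph{precisely} the eleven Archimedean types is false as a purely arithmetic statement: for instance $[5^2,10^1]$, $[3^2,6^2]$, and $[3^1,4^2,6^1]$ all satisfy the equation but are not on the list. The Euler relation only gives a necessary condition; showing that a semi-equivelar map on the torus must have one of the eleven Archimedean vertex types requires additional local combinatorial arguments (ruling out the spurious solutions by checking that the corresponding face-cycles cannot close up consistently around every vertex). This is precisely the content of \cite{DM2018}, which the paper cites alongside Proposition~\ref{theo:plane}. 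So the fix is simply to replace your Euler-characteristic enumeration by an appeal to \cite{DM2017, DM2018} for the classification of possible types on the torus; once the type is known to be Archimedean, Proposition~\ref{theo:plane} applies and the rest of your argument goes through unchanged.
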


A map is {\em regular} if its automorphism group acts regularly on flags (which, in nondegenerate cases, may be identified with mutually incident vertex-edge-face triples). In general, a map is \emph{semiregular} (or \emph{almost regular}) if it has as few flag orbits as possible for its type. A map is \emph{$k$-regular} if it is equivelar and the number of flag orbits of the map $k$ under the automorphism group. In particular, if $k =1$, its called regular. Similarly, a map is called \emph{$k$-semiregular} if it contains more number of flags as compared to its type and the number of flags orbits $k$.
The study of regular maps on compact surfaces has a long and rich history. Its early stages go back to the ancient Greeks' interest in highly symmetric solids and (much later) to Kepler's discovery of stellated polyhedra. A new dimension to the combinatorial and group-theoretic nature of the study of highly symmetric maps was added in the late 19th century in the work of Klein and Poincar{\'e} by revealing facts that relate the theory of maps to hyperbolic geometry and automorphic functions.

A systematic approach to classification of regular maps on a given surface was initiated by Brahana in the early 20th century. In the span of the following 70 years this was gradually extended by contributions of numerous authors,
resulting by the end of 1980's in a classification of all chiral and regular maps on orientable surfaces of genus up to 7, and regular maps on nonorientable surfaces of genus at most 8. Details of this development are summarized in
the survey paper \cite{siran2006}.
In 2000, the classification was extended with the help of computing power to orientable and nonorientable surfaces of genus up to 101 and 202, respectively \cite{conder2001}. Nevertheless, by the end of 20th century, classification of regular maps
was available only for a finite number of surfaces. 

Many ideas of the discrete symmetric structures on torus follow from the concepts introduced by Coxeter and Moser in \cite{CM1957}.  A surjective mapping $\eta \colon X \to Y$ from a map $X$ to a map $Y$ is called a $covering$ if it preserves adjacency and sends vertices, edges, faces of $X$ to vertices, edges, faces of $Y$ respectively. That is, let $G \leq$Aut($X$) be a discrete group acting on a map $X$ \emph{properly discontinuously} (\cite[Chapter 2]{katok:1992}). This means that each element $g$ of $G$ is associated with an automorphism $h_g$ of $X$ onto itself, in such a way that $h_{gh}$ is always equal to $h_g h_h$ for any two elements $g$ and $h$ of $G$, and $G$-orbit of any vertex $u\in V(X)$ is locally finite. Then, there exists $\Gamma \leq $Aut($X$) such that $Y = X/\Gamma$. In such a case, $X$ is called a cover of $Y$. A map $X$ is called regular if the automorphism group of $X$ acts transitively on the set of flags of $X$. Clearly, if a semi-equivelar map is not equivelar then it cannot be regular. 

A natural question then is: 

\begin{question}\label{ques}
Let $X$ be a semi-equivelar  map on the torus. Let $X$ be $k$-semiregular. Does there exist any cover $Y(\neq X)$ of some $m$-semiregular map? Does this cover exist for every sheet, if so, how many? How the flag orbits of $X$ and $Y$ are related? 
\end{question}

In this context, there is also much interest in finding minimal regular covers of different families of maps and polytopes (see \cite{HW2012, MPW2013, pw2011}). In \cite{drach:2015}, Drach et al. constructed the minimal rotary cover of any equivelar toroidal map. Then, they have extended their idea to toroidal maps that are no longer equivelar, and constructed minimal toroidal covers of the Archimedean toroidal maps with maximal symmetry (see in \cite{drach:2019}), called these covers almost regular; they will no longer be regular (or chiral), but instead will have the same number of flag orbits as their associated tessellation of the Euclidean plane. Here, we have the following. 
 
\begin{theorem} \label{no-of-orbits}
Let $X$ be a semi-equivelar map on the torus. Let the flags of $X$ form $m$ ${\rm Aut}(X)$-orbits.\\
{\rm (a)} If the type of $X$ is  $[3^6]$ or $[6^3]$ then $m \leq 6$.\\ 
{\rm (b)} If the type of $X$ is  $[4^4]$  then $m \leq 4$.\\ 
{\rm (c)} If the type of $X$ is  $[3^3, 4^2]$ or $[3^2, 4^1, 3^1, 4^1]$ then $m \leq 10$.\\ 
{\rm (d)} If the type of $X$ is  $[4^1, 8^2]$ or $[3^1, 6^1, 3^1, 6^1]$ then $m \leq 12$.\\ 
{\rm (e)} If the type of $X$ is  $[3^1, 12^2]$  then $m \leq 18$.\\ 
{\rm (f)} If the type of $X$ is  $[3^1, 4^1, 6^1, 4^1]$  then $m \leq 24$.\\ 
{\rm (g)} If the type of $X$ is  $[3^4, 6^1]$  then $m \leq 30$.\\ 
{\rm (h)} If the type of $X$ is  $[4^1, 6^1, 12^1]$  then $m \leq 36$.
These bounds are also sharp. 
\end{theorem}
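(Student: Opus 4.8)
The plan is to reduce everything to a quotient computation via Proposition~\ref{propo-1}. Every semi-equivelar toroidal map $X$ of a fixed Archimedean type is $X = T/\Gamma$, where $T$ is the corresponding planar Archimedean tiling and $\Gamma$ is a rank-two lattice of translations: any discrete, cocompact, fixed-point-free subgroup of $\mathrm{Aut}(T)$ whose quotient is a torus must be generated by two independent translations (glide reflections would yield a non-orientable quotient), so $\Gamma$ sits as a finite-index subgroup of the full translation group $L := \mathrm{Trans}(T)$. Since $T$ is the universal cover of the map $X$, every map-automorphism of $X$ lifts to an automorphism of $T$ normalizing $\Gamma$ (here one uses that the combinatorial automorphism group of $T$ coincides with its wallpaper symmetry group, which follows from the rigidity underlying Proposition~\ref{theo:plane}); as $\Gamma$ acts freely, the kernel of the descent map is exactly $\Gamma$, so $\mathrm{Aut}(X) \cong N_{\mathrm{Aut}(T)}(\Gamma)/\Gamma$. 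Consequently the $\mathrm{Aut}(X)$-orbits on flags of $X$ biject with the $N_{\mathrm{Aut}(T)}(\Gamma)$-orbits on flags of $T$, and $m$ is governed entirely by how large this normalizer is forced to be.

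Writing each element of $\mathrm{Aut}(T)$ as $x \mapsto Ax + b$ with linear part $A$, conjugation of translations gives that $(A,b)$ normalizes $\Gamma$ precisely when $A\Gamma = \Gamma$. The key structural observation is that the point group of every one of the eleven Archimedean tilings contains a half-turn $\rho$ (linear part $-I$), and $-I$ preserves every lattice since $-\Gamma = \Gamma$. Hence for \emph{every} choice of $\Gamma$ the normalizer contains the fixed subgroup $L \rtimes \langle\rho\rangle$ (a $p2$-group), independent of $\Gamma$. Modding out by $L$ leaves exactly $f$ flag-classes, where $f$ is the number of flags in one fundamental domain of $L$, and $\rho$ then acts on these $f$ classes. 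I would verify that no half-turn of the plane can fix a flag $(v,e,F)$: a $180^\circ$ rotation fixing the vertex $v$ must be centred at $v$, whence it carries the incident edge $e$ to the oppositely directed (and therefore distinct) edge, a contradiction. Thus $\rho$ acts freely, producing exactly $f/2$ orbits, and since $N_{\mathrm{Aut}(T)}(\Gamma) \supseteq L \rtimes \langle\rho\rangle$ always, the orbit count can only drop under the larger group, giving $m \le f/2$ for all $X$.

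It then remains to evaluate $f/2$ for each type. Counting is routine: a vertex of degree $d$ lies in $2d$ flags, so $f = 2\sum_{v} \deg v = 4E$, where $E$ is the number of edge-orbits of $L$, and hence $f/2 = 2E$. Computing $E$ per type from the vertex figure, using $\sum_v \deg v = 2E$ together with the face counts and cross-checking against $V - E + F = 0$ on the torus, yields precisely the bounds listed in (a)--(h).

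For sharpness I would exhibit, for each type, a finite-index lattice $\Gamma \le L$ whose only point symmetries are $\pm I$, i.e.\ a generic (skewed) sublattice admitting no rotation or reflection of the tiling other than the half-turn; for such $\Gamma$ one has $N_{\mathrm{Aut}(T)}(\Gamma) = L \rtimes \langle\rho\rangle$ exactly, so $X = T/\Gamma$ attains $m = f/2$. The hard part is exactly this sharpness step: one must produce an explicit $\Gamma$ that simultaneously (i) breaks every point symmetry except $\pm I$, so that no further automorphisms collapse flag orbits, and (ii) has index large enough that $T/\Gamma$ is a genuine polyhedral map (no self-identified edges, distinct faces meeting only in a vertex or edge). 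A sufficiently skew, high-index sublattice handles (ii), and a generic choice handles (i); the delicate point is confirming that the resulting toroidal map has automorphism group \emph{exactly} $(L/\Gamma) \rtimes \langle\rho\rangle$, and hence exactly the claimed number of flag orbits, with no accidental extra symmetry.
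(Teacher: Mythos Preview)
Your approach is essentially the same as the paper's. Both write $X = E_i/K_i$ with $K_i$ a rank-two translation lattice, observe that the group $G_i := H_i \rtimes \langle \chi_i \rangle$ (your $L \rtimes \langle\rho\rangle$, where $\chi_i$ is the half-turn about the origin) normalizes every such $K_i$ because $-I$ preserves every lattice, deduce $G_i/K_i \le \mathrm{Aut}(X)$, and bound $m$ by the number of $G_i$-orbits on flags of $E_i$. The paper simply asserts this orbit count type by type; your uniform formula $f/2 = 2E$ together with the explicit argument that no half-turn can fix a flag is a cleaner packaging of the same computation. For sharpness the paper exhibits a concrete lattice (e.g.\ $K = \langle \alpha_1^{5}, \beta_1^{3}\rangle$ for $[3^6]$), checks directly that $\mathrm{Nor}(K)=G_1$, and then transfers to the non-equivelar types via the associated-equivelar-map Lemmas~\ref{X-9}--\ref{X-7}; your ``generic skew sublattice'' is the same idea in outline, and as you note the only real work left is verifying that no extra point symmetry survives for the chosen $\Gamma$.
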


\begin{proposition} (\cite{drach:2015, drach:2019}) \label{drach2019}
Let $E$ be an Archimedean tiling of type $Z$ and $k$-semiregular. If $X$ is a semi\mbox{-}equivelar toroidal map of type $Z$ then there exists a covering $\eta \colon Y \to X$ where $Y$ is $k$-semiregular and unique.
\end{proposition}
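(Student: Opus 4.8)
The plan is to realize everything inside the universal cover and reduce the statement to a question about sublattices of the translation group of $E$. By Proposition \ref{propo-1}, $X = E/\Lambda$ for a discrete subgroup $\Lambda \le {\rm Aut}(E)$; since $X$ is a torus, $\Lambda$ acts freely and cocompactly, so it is a rank-two lattice of translations and a finite-index subgroup of the full translation group $T = T(E) \trianglelefteq {\rm Aut}(E)$. Writing $P = {\rm Aut}(E)/T$ for the finite point group, $P$ acts on $T \cong \mathbb{Z}^2$ by conjugation (and this action is well defined on sublattices of $T$ since $T$ is abelian). I would record at the outset that, by Proposition \ref{theo:plane}, $E$ is the unique semi-equivelar map of its type on the plane, so $E$ is the universal cover of every type-$Z$ toroidal map and the combinatorial group ${\rm Aut}(E)$ coincides with the geometric symmetry group; this rigidity is what lets me pass symmetries up and down the covering.

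First I would construct the candidate cover. Set $\Lambda' = \bigcap_{g \in P} g\cdot\Lambda$, the largest $P$-invariant sublattice of $\Lambda$. As a finite intersection of finite-index subgroups of the finitely generated abelian group $T$ it has finite index in $T$, it is contained in $\Lambda$ (the identity lies in $P$), and by construction it is $P$-invariant; since $\Lambda' \le T$ with $T$ abelian, $P$-invariance is exactly normality of $\Lambda'$ in ${\rm Aut}(E)$. Put $Y = E/\Lambda'$. Then $Y$ is a toroidal map of type $Z$, and the inclusion $\Lambda' \le \Lambda$ induces a finite covering $\eta \colon Y \to X$.

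Next I would pin down ${\rm Aut}(Y)$ and count flag orbits. Because $\Lambda' \trianglelefteq {\rm Aut}(E)$, every automorphism of $E$ descends to $Y$, giving ${\rm Aut}(E)/\Lambda' \le {\rm Aut}(Y)$. Conversely, every automorphism of $Y$ lifts to the universal cover $E$ and normalizes the deck group $\Lambda'$; by the rigidity noted above this lift lies in ${\rm Aut}(E)$, so ${\rm Aut}(Y) = N_{{\rm Aut}(E)}(\Lambda')/\Lambda' = {\rm Aut}(E)/\Lambda'$. The standard correspondence between the orbits of a group $G$ and the orbits of $G/N$ on $N$-orbits then shows that the flags of $Y$ fall into exactly as many ${\rm Aut}(Y)$-orbits as the flags of $E$ fall into ${\rm Aut}(E)$-orbits, namely $k$. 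Hence $Y$ is $k$-semiregular.

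Finally, I would address uniqueness. Since $\Lambda'$ is characterized as the largest $P$-invariant (equivalently, ${\rm Aut}(E)$-normal) sublattice of $\Lambda$, the cover $Y = E/\Lambda'$ is the minimal normal cover of $X$ of type $Z$, and any two such minimal covers share the lattice $\Lambda'$, yielding uniqueness. The main obstacle will be closing the gap between \emph{normal cover} and \emph{$k$-semiregular cover}: I must show that attaining the least possible number $k$ of flag orbits forces the cover to carry the full point-group action, i.e.\ that its defining sublattice $\Lambda''$ is $P$-invariant and hence contains $\Lambda'$. This is where the specific structure of the Archimedean point groups enters; concretely, if ${\rm Aut}(Y'') = N_{{\rm Aut}(E)}(\Lambda'')/\Lambda''$ has only $k$ orbits on flags, then $N_{{\rm Aut}(E)}(\Lambda'')$ must act with the same flag orbits as ${\rm Aut}(E)$, which forces $N_{{\rm Aut}(E)}(\Lambda'') = {\rm Aut}(E)$ and thus normality of $\Lambda''$. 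Once this is established, every $k$-semiregular cover of $X$ factors through $Y$, so $Y$ is the unique minimal one.
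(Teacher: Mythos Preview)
The paper does not give its own proof of this proposition; it is quoted verbatim from \cite{drach:2015, drach:2019} and used as an input to the later arguments. So there is no in-paper proof to compare against, and your task is really to supply a self-contained argument.

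Your construction of $Y$ via the largest $P$-invariant sublattice $\Lambda' = \bigcap_{g\in P} g\cdot\Lambda$ is exactly the right idea and is essentially how the cited references proceed. The verification that $\Lambda'$ is a rank-two lattice, that $\Lambda' \trianglelefteq {\rm Aut}(E)$, that ${\rm Aut}(Y) = {\rm Aut}(E)/\Lambda'$, and hence that $Y$ inherits the $k$ flag orbits of $E$, is clean and correct.

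On uniqueness, you flag the step ``$N_{{\rm Aut}(E)}(\Lambda'')$ has $k$ flag orbits $\Rightarrow N_{{\rm Aut}(E)}(\Lambda'') = {\rm Aut}(E)$'' as the place where the specific Archimedean point groups enter. In fact no case analysis is needed: the action of ${\rm Aut}(E)$ on the flags of $E$ is \emph{free} (an automorphism of a polyhedral map fixing a flag is the identity). Passing to $T$-orbits of flags, the finite point group $P$ acts freely with $k$ orbits, so each $P$-orbit has size $|P|$. If a subgroup $Q = N_{{\rm Aut}(E)}(\Lambda'')/T \le P$ also produces exactly $k$ orbits, then each $Q$-orbit has size $|P|$ as well; freeness forces $|Q| = |P|$, hence $Q = P$ and $\Lambda''$ is $P$-invariant. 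This closes the gap you identified without appealing to the structure of $D_4$, $D_6$, etc., and shows that every $k$-semiregular toroidal cover of $X$ factors through $Y = E/\Lambda'$, giving minimality and uniqueness in the intended sense.
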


In this context of Prop. \ref{drach2019}, we prove the following. 

\begin{theorem}\label{thm-main1}
    {\rm (a)} If $X_1$ is a $m_1$-semiregular toroidal map of type $[3^6]$ or $[6^3]$, then there exists a covering $\eta_{k_1} \colon Y_{k_1} \to X_1$ where $Y_{k_1}$ is $k_1$-semiregular for each $k_1 \le m_1$ except $k_1 = 4,5$.\\
    {\rm (b)} If $X_2$ is a $m_2$-semiregular toroidal map of type $[4^4]$, then there exists a covering $\eta_{k_2} \colon Y_{k_2} \to X_2$ where $Y_{k_2}$ is $k_2$-semiregular for each $k_2 \le m_2$ except $k_2 = 3$.\\
    {\rm (c)} If $X_9$ is a $m_9$-semiregular toroidal map of type $[3^1, 4^1,6^1,4^1]$. Then, there exists a covering $\eta_{k_9} \colon Y_{k_9} \to X_9$ where $Y_{k_9}$ is $k_9$-semiregular for each $(k_9,m_9)=(4,8),(8,24),(4,12),(12,24),$ $(4,24)$.\\
    {\rm (d)} If $X_7$ is a $m_7$-semiregular toroidal map of type $[3^1,6^1,3^1,6^1]$ then there exists a covering $\eta_{k_7} \colon Y_{k_7} \to X_7$ where $Y_{k_7}$ is $k_7$-semiregular for each $k_7 \le m_7$ for $(k_7,m_7)=(4,8),(8,24),(4,12),$ $(12,24),(4,24)$\\
    {\rm (e)} If $X_8$ is a $m_8$-semiregular toroidal map of type $[3^1,12^2]$  then there exists a covering $\eta_{k_8} \colon Y_{k_8} \to X_8$ where $Y_{k_8}$ is $k_8$-semiregular for each $(k_8,m_8)=(3,6),(3,9),(3,18),(6,18),(9,18)$.\\
    {\rm (f)} If $X_{11}$ is a $m_{11}$-semiregular toroidal map of type $[4^1,6^1,12^1]$, then there exists a covering $\eta_{k_{11}} \colon Y_{k_{11}} \to X_{11}$ where $Y_{k_{11}}$ is $k_{11}$-semiregular for each $k_{11} \le m_{11}$ for $(k_{11},m_{11})=(6,12),(6,18),(6,36),(12,36),(18,36)$.\\
    {\rm (g)} If $X_6$ is a $m_6$-semiregular toroidal map of type  $[4^1,8^2]$ then there exists a covering $\eta_{k_6} \colon Y_{k_6} \to X_6$ where $Y_{k_6}$ is $k_6$-semiregular for each $(k_6,m_6)=(6,12),(3,6),(3,12)$.\\
    {\rm (h)} If $X_4$ is a $m_4$-semiregular toroidal map of type $[3^3,4^2]$, then there exists a covering $\eta_{k_4} \colon Y_{k_4} \to X_4$ where $Y_{k_4}$ is $k_4$-semiregular for $(k_4,m_4)=(5,10)$.\\
    {\rm (i)} If $X_5$ is a $m_5$-semiregular toroidal map of type $[3^2,4^1,3^1, 4^1]$, then there exists a covering $\eta_{k_5} \colon Y_{k_5} \to X_5$ where $Y_{k_5}$ is $k_5$-semiregular for each $(k_5,m_5) = (5,10),(5,20),(10,20)$.\\
    {\rm (j)} If $X_{10}$ is a $m_{10}$-semiregular toroidal map of type $[3^4,6^1]$, then there exists a covering $\eta_{k_{10}} \colon Y_{k_{10}} \to X_{10}$ where $Y_{k_{10}}$ is $k_{10}$-semiregular for $(k_{10},m_{10})=(10,30)$.
\end{theorem}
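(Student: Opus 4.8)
The plan is to translate the entire statement into the language of planar lattices and then build every cover as a sublattice quotient. By Proposition \ref{propo-1}, each semi-equivelar toroidal map of Archimedean type $Z$ is a quotient $X = E/\Lambda$, where $E$ is the corresponding plane tiling and $\Lambda$ is a rank-two translation sublattice of the full translation lattice $T$ of $E$. Writing $P(E)$ for the point group of $E$ (which is $D_4$ for the square-based types $[4^4],[4^1,8^2]$ and $D_6$ for the seven triangular/hexagonal types), one has $\mathrm{Aut}(X)\cong (T/\Lambda)\rtimes P_\Lambda$, where $P_\Lambda=\{\rho\in P(E): \rho(\Lambda)=\Lambda\}$. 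Since $\mathrm{Sym}(E)$ acts freely on flags, so does $\mathrm{Aut}(X)$, and counting flags per primitive cell gives the flag-orbit number of $X$ as $f_Z/|P_\Lambda|$ (this is exactly the computation behind the sharp bounds of Theorem \ref{no-of-orbits}, which I will invoke). First I would record, for each type, the resulting dictionary between $P_\Lambda$ and the orbit count; because $-1$ fixes every lattice, the only admissible point groups are $C_2,D_2,C_4,D_4$ in the square case and $C_2,D_2,C_6,D_6$ in the hexagonal case, and these produce precisely the orbit numbers in the statement (e.g. for $[3^6]$, $f_Z=12$ yields $1,2,3,6$ for $D_6,C_6,D_2,C_2$, which is why $k=4,5$ cannot occur).

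Next, a finite-index inclusion $\Lambda'\le\Lambda$ yields a genuine covering $E/\Lambda'\to E/\Lambda$ with deck group $\Lambda/\Lambda'$, so producing a $k$-semiregular cover of $X$ amounts to exhibiting $\Lambda'\le\Lambda$ whose point group $P_{\Lambda'}$ has order $f_Z/k$. The listed pairs $(k,m)$ are exactly those for which the target point group $Q_k$ (of order $f_Z/k$) can be chosen to contain the source group $Q_m=P_\Lambda$; this containment constraint is what rules out transitions such as $C_6\to D_2$ and explains the specific pairs in parts (c)--(j). To obtain symmetry \emph{at least} $Q_k$ I would use the intersection $\Lambda_0=\bigcap_{q\in Q_k}q(\Lambda)$: each $q(\Lambda)$ is a finite-index sublattice of $T$, hence commensurable with $\Lambda$, so $\Lambda_0$ is a $Q_k$-invariant sublattice of finite index in $\Lambda$, giving $P_{\Lambda_0}\supseteq Q_k$ and therefore at most $k$ flag orbits.

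The delicate point is to pin the point group down to be \emph{exactly} $Q_k$, i.e. to prevent accidental extra symmetry, since $P_{\Lambda_0}$ may jump up to the full dihedral group. Here I would exploit the ring structure: identifying $T$ with $\mathbb{Z}[i]$ (square case) or with $\mathbb{Z}[\omega]$, $\omega=e^{\pi i/3}$ (hexagonal case), the $C_4$- (resp. $C_6$-) invariant sublattices are exactly the nonzero ideals $\alpha\mathbb{Z}[i]$ (resp. $\alpha\mathbb{Z}[\omega]$), and such an ideal is fixed by a mirror of $E$ if and only if $\arg\alpha$ lies on the mirror grid, namely $45^\circ\mathbb{Z}$ (resp. $30^\circ\mathbb{Z}$). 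Thus, to realise $C_4$ or $C_6$ exactly I would multiply the invariant sublattice $\Lambda_0$ by a ring element $\beta$ whose argument is off the mirror grid; this keeps $\beta\Lambda_0\le\Lambda_0\le\Lambda$ while killing every reflection. Dually, to realise $D_2$ without upgrading to $D_4$ or $D_6$ I would take a rectangle aligned with one tiling mirror but with incommensurable side lengths. In each case this produces a sublattice $\Lambda'\le\Lambda$ with $P_{\Lambda'}=Q_k$ exactly.

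Finally I would assemble the argument type by type, checking in each of (a)--(j) that the prescribed $(k,m)$ satisfies $Q_k\supseteq Q_m$ and that the construction above yields the required $\Lambda'$, so that $Y=E/\Lambda'$ is $k$-semiregular and covers $X$. I expect the main obstacle to be precisely the upper control on symmetry in the previous paragraph: the ``at least $Q_k$'' direction is the routine intersection step, but separating $C_4$ from $D_4$ and $C_6$ from $D_6$ inside an arbitrarily given $\Lambda$ forces the Gaussian/Eisenstein argument-mod-$45^\circ$/$30^\circ$ criterion together with the incommensurable-rectangle choice for $D_2$, and this must be verified uniformly over all admissible $\Lambda$ rather than for a single generic lattice.
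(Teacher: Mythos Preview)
Your approach is sound and genuinely different from the paper's. The paper treats the equivelar types $[3^6]$, $[6^3]$, $[4^4]$ by an explicit matrix computation: given $X=E/K$ with $K=\langle\gamma,\delta\rangle$, it adjoins specific reflections or rotations to obtain a larger group $G'\le\mathrm{Aut}(E)$, then solves a $4\times4$ linear system to find an integer $m$ (in fact $m=|ad-bc|$) for which $L=\langle\gamma^{m},\delta^{m}\rangle\trianglelefteq G'$, and takes $Y=E/L$; a chain of claims then checks that no extra automorphism of $Y$ merges the $G'/L$-orbits. The remaining types (c)--(j) are not handled directly but are reduced to (a)--(b) via auxiliary ``associated equivelar maps'' $\widehat{X}$ of type $[3^6]$ or $[4^4]$ and a sequence of lemmas matching the flag-orbit count of $X$ to that of $\widehat{X}$. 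Your lattice/point-group framework treats all eleven types uniformly, replaces the linear system by the one-line intersection $\Lambda_0=\bigcap_{q\in Q_k}q(\Lambda)$, and gives a cleaner mechanism for pinning down the orbit count exactly via the arithmetic of $\mathbb{Z}[i]$ and $\mathbb{Z}[\omega]$; this is more conceptual and avoids the detour through $\widehat{X}$. One correction, though: the containment $Q_k\supseteq Q_m$ that you invoke to explain the specific pairs in (c)--(j) is not actually a constraint on your construction---$\bigcap_{q\in Q_k}q(\Lambda)$ is a $Q_k$-invariant finite-index sublattice of $\Lambda$ regardless of whether $Q_k$ contains $P_\Lambda$, and indeed part (a) with $(k_1,m_1)=(2,3)$ already requires the transition from $P_\Lambda=D_2$ to $Q_k=C_6$, where the containment fails. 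The pairs listed in (c)--(j) reflect what the paper's reduction happens to yield, not an intrinsic obstruction.
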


\begin{theorem}\label{thm-main2}
Let $X$ be a semi\mbox{-}equivelar toroidal map and $k$-semiregular. Then, there exists a $n$ sheeted covering $\eta \colon Y \to X$ for each $n \in \mathbb{N}$ where $Y$ is $m$-semiregular for some $m\le k$.
\end{theorem}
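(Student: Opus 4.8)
The plan is to translate everything into the language of the Euclidean tessellation $E$ of the given type and its symmetry group $\mathrm{Sym}(E)$, and then to reduce the statement to a purely lattice-theoretic existence problem. By Proposition \ref{propo-1} we may write $X = E/\Lambda$, where $\Lambda$ is a rank-two translation lattice contained in the full translation group $T_E$ of $E$. Since every automorphism of a toroidal quotient lifts to a symmetry of its universal cover $E$, one has $\mathrm{Aut}(X) = N_{\mathrm{Sym}(E)}(\Lambda)/\Lambda$, and because translations always normalise a translation subgroup, $N_{\mathrm{Sym}(E)}(\Lambda)$ contains all of $T_E$ together with exactly those point symmetries of $E$ that stabilise $\Lambda$. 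Writing $P_E$ for the point group of $E$ and $\mathcal F$ for the finite set of flags of $E$ modulo $T_E$, I would first record the clean bookkeeping formula: for any sublattice $\Lambda'\le\Lambda$ the number of flag orbits of $E/\Lambda'$ equals the number of orbits of $\mathcal F$ under $P_{\Lambda'} := \{\,p\in P_E : p(\Lambda')=\Lambda'\,\}$. In particular the flag-orbit count depends only on the stabilised point group, and it is at most $k$ whenever $P_{\Lambda'}\supseteq P_\Lambda$.

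With this reduction in hand, the existence of an $n$-sheeted cover is equivalent to the existence of a sublattice $\Lambda'\le\Lambda$ of index $n$, and such sublattices exist for every $n\in\mathbb N$ (for a basis $v_1,v_2$ of $\Lambda$ the lattice $\langle n v_1, v_2\rangle$ already has index $n$). The induced map $\eta\colon E/\Lambda'\to E/\Lambda$ is a covering with deck group $\Lambda/\Lambda'$ of order $n$. The heart of the argument is therefore to choose, for each $n$, an index-$n$ sublattice whose stabilised point group is at least $P_\Lambda$; the lifting principle above then forces the deck group to act transitively on each flag-fibre, so that the orbits of $\mathcal F$ under $P_{\Lambda'}$ refine those under $P_\Lambda$ and we obtain $m\le k$. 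I would dispose of the low-symmetry situation first: when $P_\Lambda$ contains no rotation of order greater than two, so that $P_\Lambda$ is generated by the central inversion and reflections in a fixed orthogonal or rhombic frame $v_1,v_2$, every \emph{axis-aligned} sublattice $\langle a v_1, d v_2\rangle$ is automatically $P_\Lambda$-invariant, and taking $a=n,\ d=1$ realises every index $n$ with $P_{\Lambda'}=P_\Lambda$, hence $m=k$. This already settles the theorem for every $X$ whose lattice does not respect a three-, four-, or six-fold rotation.

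The main obstacle is the genuinely high-symmetry case, where $P_\Lambda$ contains a rotation of order three, four, or six, so that $\Lambda$ is a scaled copy of the square or hexagonal lattice. Here the $P_\Lambda$-invariant sublattices are precisely the ideals of $\mathbb Z[i]$ or $\mathbb Z[\omega]$, whose indices are the corresponding norms; these norms omit infinitely many integers (for instance $n=3$ in the square case and $n=2$ in the hexagonal case), so a $P_\Lambda$-invariant sublattice of index $n$ simply does not exist for those $n$. For such residual indices I would not try to preserve all of $P_\Lambda$; instead I would pass to a reflection-or-inversion subgroup $P'\le P_\Lambda$ that does admit an invariant index-$n$ sublattice, and then verify --- using the slack guaranteed by the hypothesis that $X$ carries more flag orbits than its planar type, together with the explicit orbit bookkeeping behind Theorem \ref{thm-main1} and the sharp bounds of Theorem \ref{no-of-orbits} --- that $P'$ still collapses $\mathcal F$ to at most $k$ orbits. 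Controlling this last step uniformly in $n$ is where the real work lies: it amounts to showing, type by type, that among the index-$n$ sublattices there is always one whose stabilised point group keeps the flag-orbit count from exceeding that of $X$. Once this is established for each Archimedean type, assembling the covers $\eta\colon Y\to X$ and reading off $m\le k$ completes the proof for every $n\in\mathbb N$.
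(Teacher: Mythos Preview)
Your framework is sound, and you have located precisely the gap that the paper's own argument does not address. The paper proceeds in one line: with $K=\langle\gamma,\delta\rangle$ it sets $L_n=\langle\gamma^n,\delta\rangle$, notes $[K:L_n]=n$, and declares $Y_n=E/L_n$ to be the desired cover --- without verifying $m\le k$ at all. That inequality is relegated to the subsequent Lemma~\ref{orbb}, which asserts that \emph{every} cover of a $k$-orbital toroidal map has at most $k$ flag orbits; its proof tacitly assumes that every automorphism of $X$ lifts to $Y$, which in your language is the false claim that $P_{\Lambda'}\supseteq P_\Lambda$ for an arbitrary sublattice $\Lambda'\le\Lambda$. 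So you are being more careful than the paper, not less.

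Unfortunately the repair you sketch in the high-symmetry case cannot be completed, because the statement itself fails there. You appeal to ``the slack guaranteed by the hypothesis that $X$ carries more flag orbits than its planar type'', but the theorem carries no such hypothesis: $k$ is allowed to equal the planar minimum. Take $X=E_2/K$ with $K=\langle 3A_2,3B_2\rangle$, the regular torus $\{4,4\}_{(3,0)}$ of type $[4^4]$, so $k=1$. A $3$-sheeted cover with $m\le 1$ would require an index-$3$ sublattice of $K$ invariant under the full point group $D_4$; but the $D_4$-invariant sublattices of a square lattice have index $a^2$ or $2a^2$, and $3$ is neither. Each of the four index-$3$ sublattices of $K$ therefore has point group contained in some $D_2$ and yields a quotient with at least two flag orbits. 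Hence no $3$-sheeted cover $Y$ with $m\le 1$ exists. The step you flag as ``where the real work lies'' is thus not merely unfinished --- it is a genuine obstruction, and neither your outline nor the paper's proof can be salvaged without adding a hypothesis (e.g.\ that $X$ is not already semiregular) or weakening the conclusion (e.g.\ restricting to those $n$ that are norms in the relevant imaginary quadratic ring).
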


\begin{theorem}\label{thm-main3}
Let $X$ be a $n$ sheeted semi\mbox{-}equivelar $k$-semiregular toroidal map and $\sigma(n) = \sum_{d|n}d$. Then, there exists different $n$ sheeted $m$-semiregular covering $\eta_{\ell} \colon Y_{\ell} \to X$ for $ \ell \in \{1, 2, \dots, \sigma(n)\}$, i.e., $Y_{1},$ $Y_{2},$ $\dots,$ $Y_{\sigma(n)}$ are $n$ sheeted $m$-semiregular covers of $X$ and different upto isomorphism for some $m\le k$.
\end{theorem}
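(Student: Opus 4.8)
The plan is to pass to the translation lattice and convert the problem into a count of finite-index sublattices of $\mathbb{Z}^2$. By Proposition \ref{propo-1} I may write $X = E/\Lambda_X$, where $E$ is the Archimedean tiling having the type of $X$ and $\Lambda_X \le \mathrm{Aut}(E)$ is the rank-two lattice of translations whose quotient is the torus carrying $X$; in particular the fundamental group of that torus is $\Lambda_X \cong \mathbb{Z}^2$. First I would set up the dictionary between covers and sublattices: by the standard theory of covering spaces, a connected $n$-sheeted toroidal covering $\eta \colon Y \to X$ is determined up to isomorphism of covers by an index-$n$ subgroup $\Lambda \le \Lambda_X$, with $Y = E/\Lambda$ and $\eta$ the quotient projection. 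Since $E$ is the common universal tiling for the type, every such $Y$ is again a semi-equivelar toroidal map of the same type as $X$, and $\eta$ carries vertices, edges and faces of $Y$ onto those of $X$, so it is a covering of maps in the sense used here.

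The combinatorial core is then the enumeration of index-$n$ subgroups of $\Lambda_X \cong \mathbb{Z}^2$. I would use the Hermite normal form: every such subgroup has a unique basis whose matrix is $\left(\begin{smallmatrix} a & b \\ 0 & d \end{smallmatrix}\right)$ with $a,d \ge 1$, $ad = n$ and $0 \le b < d$. For each divisor $d \mid n$ there are $d$ admissible residues $b$ and a unique complementary $a = n/d$, so the number of index-$n$ sublattices equals $\sum_{d \mid n} d = \sigma(n)$. This yields sublattices $\Lambda_1, \dots, \Lambda_{\sigma(n)}$ and hence the covers $Y_\ell = E/\Lambda_\ell$, $1 \le \ell \le \sigma(n)$.

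Next I would prove that these covers are pairwise distinct and record their flag-orbit count. Distinctness is where the group-theoretic input enters: because $\Lambda_X$ is abelian, two of its index-$n$ subgroups are conjugate only when they are equal, so by the dictionary between connected covers and conjugacy classes of subgroups the maps $Y_1, \dots, Y_{\sigma(n)}$ are pairwise non-isomorphic as covers of $X$. For the semiregularity I would use a fundamental-domain count: the full translation group $T(E)$ normalises every $\Lambda_\ell$ and therefore descends to $\mathrm{Aut}(Y_\ell)$, while the surviving point-group symmetries form the stabiliser $\mathrm{Stab}(\Lambda_\ell)$ of $\Lambda_\ell$ in the point group of $E$. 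Counting flags in a translational fundamental domain then gives the number of flag orbits of $Y_\ell$ as $F/|\mathrm{Stab}(\Lambda_\ell)|$, where $F$ is the number of $T(E)$-orbits of flags of $E$. Since $-I$ stabilises every lattice, $|\mathrm{Stab}(\Lambda_\ell)| \ge 2$, so each $Y_\ell$ has at most $F/2$ flag orbits; as $F/2$ equals the sharp bound $k$ recorded for the type in Theorem \ref{no-of-orbits}, every $Y_\ell$ is $m$-semiregular for some $m \le k$, as required.

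The step I expect to be the main obstacle is reconciling the full count $\sigma(n)$ with the flag-orbit bound $m \le k$. The fundamental-domain computation shows that passing to a sublattice can only destroy point symmetries unless the sublattice is specially aligned, so in general the covers realise many different values of $m$ and some may be strictly less regular than $X$; whereas Theorem \ref{thm-main2} only produces a single cleverly aligned cover with $m \le k$, here I must control all $\sigma(n)$ covers at once. The clean statement $m \le k$ holds uniformly precisely because one reads $k$ as the sharp type bound $F/2$ of Theorem \ref{no-of-orbits}, and verifying this requires the explicit point-group stabiliser analysis for each of the eleven types. That same analysis is what produces the type-dependent exceptions already visible in Theorems \ref{thm-main1} and \ref{no-of-orbits}, so I would organise the final verification type by type, using the explicit Hermite forms to read off $\mathrm{Stab}(\Lambda_\ell)$ and the realised flag-orbit numbers.
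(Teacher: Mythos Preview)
Your Hermite-normal-form enumeration of index-$n$ sublattices of $\Lambda_X\cong\ZZ^2$ is exactly the combinatorial core of the paper's argument, and the count $\sigma(n)$ is reached the same way (the paper phrases it via matrices $M_Y$ with $|\det M_Y|=n$ modulo right multiplication by $GL(2,\ZZ)$, which is the same thing). There are, however, two places where your proposal either proves something weaker than the paper or silently reinterprets the statement.

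\textbf{Distinctness.} You establish that the $Y_\ell$ are pairwise non-isomorphic \emph{as covers of $X$}, using that $\pi_1(X)$ is abelian so conjugate subgroups coincide. The paper aims for the stronger conclusion that the $Y_\ell$ are non-isomorphic \emph{as maps}. It proves a separate lemma (Lemma~\ref{isomm}) saying $Y_1\simeq Y_2$ as maps iff $M_{Y_1}=AM_{Y_2}B$ for some $A$ in the point group $G_0$ of $E$ and some $B\in GL(2,\ZZ)$, and then argues that on the set of Hermite-normal-form representatives this relation is trivial by forcing $A$ to be lower triangular with unit diagonal, hence $A=I$. Your abelian-conjugacy argument does not reach this: two distinct sublattices of $\Lambda_X$ may well be carried to one another by a rotation or reflection in $G_0$, making the associated tori isomorphic as maps while remaining non-isomorphic as covers. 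If ``different up to isomorphism'' is read as map isomorphism, as the paper's own proof indicates, you still owe the $G_0$-orbit analysis on Hermite forms.

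\textbf{The bound $m\le k$.} In the statement, $k$ is the number of flag orbits of the specific map $X$, not the sharp type bound of Theorem~\ref{no-of-orbits}. Your stabiliser argument only yields $m\le F/2$, the type bound, which may be strictly larger than $k$ when $X$ has extra symmetry. The paper does not address $m\le k$ inside the proof of Theorem~\ref{thm-main3} at all; it relies on the separately stated Lemma~\ref{orbb}, which asserts that any cover of $X$ has at most as many flag orbits as $X$, by lifting an automorphism of $X$ through the covering to an automorphism of $Y$. Your point-stabiliser computation cannot substitute for this, precisely because (as you yourself note) passing to a generic sublattice $\Lambda_\ell\le\Lambda_X$ can shrink the point-group stabiliser below $\mathrm{Stab}(\Lambda_X)$. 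To match the theorem as stated you need the lifting argument of Lemma~\ref{orbb}, not a reinterpretation of $k$.
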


\begin{theorem}\label{thm-main4}
Let $X$ be a $m$-semiregular semi\mbox{-}equivelar toroidal map and $Y$ be a $k$-semiregular covers of $X$. Then, there exists a $k$-semiregular covering map $\eta \colon Z \to X$ such that $Z$ is minimal.
\end{theorem}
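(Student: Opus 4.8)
The plan is to translate the entire question into the language of planar lattices and then read off minimality from the index structure of sublattices. By Proposition \ref{propo-1}, every semi\mbox{-}equivelar toroidal map of a fixed type is a quotient $E/\Lambda$ of the corresponding Archimedean tiling $E$ by a finite\mbox{-}index subgroup $\Lambda$ of the translation lattice $T$ of $\Gamma := \mathrm{Aut}(E)$. Under this dictionary, covers of $X = E/\Lambda_X$ correspond exactly to finite\mbox{-}index sublattices $\Lambda \le \Lambda_X$, the number of sheets of $E/\Lambda \to X$ being the index $[\Lambda_X : \Lambda]$, and the covering order is reverse inclusion of lattices. So first I would fix $X = E/\Lambda_X$ and record the hypothesis as $Y = E/\Lambda_Y$ with $\Lambda_Y \le \Lambda_X$.

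Second, I would use the flag\mbox{-}orbit count underlying Theorem \ref{no-of-orbits}: writing $P := \Gamma/T$ for the point group and $k_0$ for the number of flag orbits of $E$ itself, the number of $\mathrm{Aut}(E/\Lambda)$\mbox{-}orbits on flags is $k_0\cdot[P:\mathrm{Stab}_P(\Lambda)]$, where $\mathrm{Stab}_P(\Lambda) = \{p \in P : p(\Lambda)=\Lambda\}$. This is because an element of $\Gamma$ descends to $E/\Lambda$ iff its point part stabilizes $\Lambda$, so that $\mathrm{Aut}(E/\Lambda)=N_\Gamma(\Lambda)/\Lambda$ has order $[T:\Lambda]\,|\mathrm{Stab}_P(\Lambda)|$, while $E/\Lambda$ carries $[T:\Lambda]\,k_0\,|P|$ flags. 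Hence $E/\Lambda$ is $k$\mbox{-}semiregular precisely when $[P:\mathrm{Stab}_P(\Lambda)] = k/k_0 =: j$. In particular the family $\mathcal{C}_k$ of $k$\mbox{-}semiregular covers of $X$ corresponds to the sublattices $\Lambda \le \Lambda_X$ with $[P:\mathrm{Stab}_P(\Lambda)]=j$, and $\mathcal{C}_k$ is nonempty because $\Lambda_Y$ lies in it.

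Third comes the construction of the candidate. Put $Q := \mathrm{Stab}_P(\Lambda_Y)$, a subgroup of $P$ of index $j$. I would restrict attention to the $Q$\mbox{-}invariant sublattices $\Lambda$ with $\Lambda_Y \le \Lambda \le \Lambda_X$; since a sum of $Q$\mbox{-}invariant lattices is again $Q$\mbox{-}invariant and stays inside $\Lambda_X$, this subfamily is closed under the lattice join and therefore has a unique largest member $\Lambda^{*}$, the maximal $Q$\mbox{-}invariant sublattice of $\Lambda_X$ containing $\Lambda_Y$. By construction $Y$ factors through $Z := E/\Lambda^{*}$, the latter covers $X$, and $\mathrm{Stab}_P(\Lambda^{*}) \supseteq Q$, so $Z$ has at most $k$ flag orbits.

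The main obstacle is exactly the gap between $\mathrm{Stab}_P(\Lambda^{*})\supseteq Q$ and $\mathrm{Stab}_P(\Lambda^{*})=Q$: enlarging a lattice can only enlarge its point\mbox{-}group stabilizer, hence only decrease the flag\mbox{-}orbit count, so a priori $\Lambda^{*}$ could be strictly more symmetric than $\Lambda_Y$ and yield fewer than $k$ orbits (so $Z$ would fail to be $k$\mbox{-}semiregular). The heart of the proof is to verify $\mathrm{Stab}_P(\Lambda^{*})=Q$, i.e. that no enlargement of $\Lambda_Y$ inside $\Lambda_X$ acquires extra symmetry; I expect to check this type by type, using the explicit point groups $P$ and their subgroup lattices already tabulated for Theorem \ref{no-of-orbits}, together with the fact that every lattice stabilizer contains the central inversion. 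When this exact\mbox{-}stabilizer condition holds, $Z=E/\Lambda^{*}$ is the desired minimal $k$\mbox{-}semiregular cover, indeed the unique one through $Y$; in any case minimality in the covering poset follows a fortiori, since the sheet numbers $[\Lambda_X:\Lambda]$ over $\mathcal{C}_k$ are positive integers and hence attain a least value, any lattice realizing which gives a cover that is minimal for the covering order.
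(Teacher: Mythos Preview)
Your final sentence is the entire proof the paper gives, and it already suffices: the paper simply observes that the sheet numbers of $k$\mbox{-}semiregular covers of $X$ form a nonempty set of positive integers (nonempty because $Y$ is one), takes any cover realizing the least such number, and declares it minimal. That is literally all there is to the paper's argument; no lattice stabilizer computation, no explicit $\Lambda^{*}$, no type\mbox{-}by\mbox{-}type check.

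Everything before your last sentence is aiming at a stronger statement than the theorem asks for---a canonical minimal $k$\mbox{-}semiregular cover through which $Y$ itself factors---and the obstacle you flag is genuine and cannot in general be removed. Indeed, if $\Lambda_X$ already satisfies $Q\le\mathrm{Stab}_P(\Lambda_X)$ (for instance whenever $X$ is $k'$\mbox{-}semiregular with $k'<k$, which is the typical situation by Lemma~\ref{orbb}), then $\Lambda_X$ is itself $Q$\mbox{-}invariant, so your maximal $Q$\mbox{-}invariant $\Lambda^{*}$ is just $\Lambda_X$ and $\mathrm{Stab}_P(\Lambda^{*})=\mathrm{Stab}_P(\Lambda_X)\supsetneq Q$. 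Thus $Z=E/\Lambda^{*}=X$ has strictly fewer than $k$ flag orbits, and the construction fails outright---no amount of case analysis will rescue it. (Incidentally, the remark that ``enlarging a lattice can only enlarge its point\mbox{-}group stabilizer'' is false in general; what is true here is only that $\Lambda^{*}$ is $Q$\mbox{-}invariant \emph{by construction}.) So drop the $\Lambda^{*}$ construction entirely and keep just the well\mbox{-}ordering argument at the end; that matches the paper and is complete.
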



\section{Examples} \label{sec:examples}

We first present eleven Archimedean tilings on the plane. We need these examples for the proofs of our results in Section \ref{sec:proofs-1}.


\begin{figure}[H]
\centering
\begin{minipage}[b][5cm][s]{.45\textwidth}
\centering
\vfill

\caption{$E_4$~($[3^3,4^2]$)}\label{fig-E_4}
\vfill
\end{minipage}
\end{figure}
\vspace{1cm}

\section{Classification of $k$-semiregular covers of semi-equivelar maps}\label{sec:proofs-1}

Before going to the proofs of main theorems we need following series of results.
From  \cite[Proposition 3.2-3.7]{drach:2019} we get
\begin{proposition}\label{uni}
Let $E$ be a semi\mbox{-}equivelar tiling on the plane. Suppose $E$ has $m$ flag\mbox{-}orbits. Then 
{\rm (a)} If the type of $E$ is  $[3^6]$, $[4^4]$ or $[6^3]$ then $m = 1$.\\
{\rm (b)} If the type of $E$ is  $[3^1,6^1,3^1,6^1]$ then $m = 2$.\\
{\rm (c)} If the type of $E$ is  $[3^1, 12^2]$ or $[4^1,8^2]$  then $m = 3$.\\
{\rm (d)} If the type of $E$ is  $[3^1,4^1,6^1,4^1]$ then $m = 4$.\\
{\rm (e)} If the type of $E$ is  $ [3^2,4^1,3^1,4^1]$  then $m = 5$. \\
{\rm (f)} If the type of $E$ is  $[4^1,6^1, 12^1]$ then $m = 6$.\\
{\rm (g)} If the type of $E$ is  $[3^4,6^1]$ then $m = 10$.\\
{\rm (h)} If the type of $E$ is  $[3^3,4^2]$ then $m = 5$.
\end{proposition}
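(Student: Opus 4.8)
The plan is to reduce the global count of flag orbits to a purely local computation at a single vertex, exploiting two facts: each $E_i$ is vertex-transitive (Proposition \ref{theo:plane}), and the automorphism group of a connected map acts freely (semiregularly) on flags. First I would record the free-action lemma: if $\varphi \in {\rm Aut}(E)$ fixes a flag $(v,e,f)$, then $\varphi = {\rm id}$. Indeed, fixing a flag forces $\varphi$ to fix each of the three flags obtained from it by an elementary flag-move (changing the vertex, the edge, or the face while keeping the other two fixed); since $E$ is connected, these moves generate the whole flag set, so $\varphi$ fixes every flag and is the identity. (Geometrically the same follows because $\varphi$ is realized by an isometry fixing $v$, the midpoint of $e$, and the centroid of $f$, three non-collinear points.) Hence ${\rm Aut}(E)$ acts freely on flags.

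Next I would combine freeness with vertex-transitivity. Fix a vertex $v_0$ of degree $d$; for type $[p_1^{n_1},\dots,p_k^{n_k}]$ one has $d = n_1+\cdots+n_k$, and there are exactly $2d$ flags incident to $v_0$ (two per incident face). Every flag is ${\rm Aut}(E)$-equivalent to one at $v_0$, and two flags at $v_0$ are equivalent precisely when they lie in one orbit of the stabilizer ${\rm Aut}(E)_{v_0}$, which by the lemma acts freely on these $2d$ flags. Therefore
\[
m \;=\; \frac{2d}{\,|{\rm Aut}(E)_{v_0}|\,},
\]
and the proposition reduces to determining the order of the vertex-stabilizer for each of the eleven types.

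I would then identify ${\rm Aut}(E)_{v_0}$ with the group of symmetries of the cyclic face-type sequence around $v_0$ that extend to global automorphisms: a rotation is a cyclic shift of $(p_1^{n_1},\dots,p_k^{n_k})$ preserving the sequence, and a reflection an order-reversing symmetry. A direct inspection of the eleven sequences gives the \emph{local} symmetry orders: $12,8,6$ for the regular types $[3^6],[4^4],[6^3]$ (full dihedral, $m=1$); order $4$ (a half-turn and two reflections, $D_2$) for $[3^1,6^1,3^1,6^1]$, giving $m=2$; order $2$ (a single reflection) for $[3^1,12^2]$, $[4^1,8^2]$, $[3^1,4^1,6^1,4^1]$, $[3^3,4^2]$ and $[3^2,4^1,3^1,4^1]$, giving $m=3,3,4,5,5$; and order $1$ for $[4^1,6^1,12^1]$ (three pairwise distinct faces, no nontrivial symmetry), giving $m=6$. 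Dividing $2d$ by these orders reproduces every stated value, \emph{provided} each local symmetry actually extends to a global automorphism. For the reflexible types this extension is exactly what Proposition \ref{theo:plane} supplies: I would invoke the explicit wallpaper symmetry groups ($p6m$, $p4g$, $cmm$, etc.), or equivalently extend the local patch using the uniqueness in Proposition \ref{theo:plane}.

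The last point is the main obstacle, and it is precisely the chirality question for $[3^4,6^1]$. Its sequence $(3,3,3,3,6)$ admits a local reflection through the hexagon, so a naive count would give $10/2=5$; but the snub trihexagonal tiling is chiral, and this local reflection does \emph{not} extend to any automorphism, so ${\rm Aut}(E)_{v_0}$ is trivial and $m=10$. The delicate work will be to prove this non-extension while simultaneously confirming that the formally identical local reflection of the snub \emph{square} type $[3^2,4^1,3^1,4^1]$ \emph{does} extend (its group being $p4g$). I expect this to be handled by tracking induced orientations: a global reflection would have to reverse the consistent cyclic orientation forced by the snub structure, which is impossible for the trihexagonal snub yet compatible with the square snub. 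Once the vertex-stabilizer of $[3^4,6^1]$ is shown to be trivial, the displayed formula yields all remaining values.
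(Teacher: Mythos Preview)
The paper does not prove this proposition at all: it is stated as a direct citation from \cite{drach:2019} (Propositions~3.2--3.7 there), with no argument given. Your proposal, by contrast, is a self-contained proof, and the orbit-counting formula $m = 2d/|{\rm Aut}(E)_{v_0}|$ is exactly the right tool. The reduction via free action on flags plus vertex-transitivity is clean and correct, and your case-by-case determination of the vertex-stabilizer orders matches the known wallpaper groups of the eleven tilings.

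The one place where your sketch is genuinely incomplete---and you are right to flag it---is the extension problem. For ten of the eleven types you can simply quote the standard wallpaper group ($p6m$, $p4m$, $p4g$, $cmm$, $p31m$, $p6$, etc.) and read off $|{\rm Aut}(E)_{v_0}|$ as the order of a point stabilizer in that crystallographic group; this is routine and well documented. The real content is, as you say, the pair $[3^2,4^1,3^1,4^1]$ versus $[3^4,6^1]$: both have a combinatorial local reflection, but only the first extends. Your orientation-tracking idea is the right one, but to make it a proof you should argue concretely: in the snub trihexagonal tiling every triangle carries a coherent orientation (induced by, say, the unique adjacent hexagon), and a reflection would have to reverse all of these simultaneously while also mapping the tiling to itself, which is impossible since the mirror image is the enantiomorphic tiling and is not isomorphic to the original by Proposition~\ref{theo:plane}'s uniqueness. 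For the snub square tiling the analogous orientation is \emph{not} coherent across the whole plane (the $4$-fold rotation centres alternate chirality), and an explicit glide reflection in $p4g$ does the job. Filling in that paragraph would complete your argument; everything else is sound.
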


\begin{proof}[Proof of Theorem \ref{no-of-orbits}]
Let for $i=1,2,\dots 11$ $E_i$ be the Archimedean tiling of the plane as in Section \ref{sec:examples}. Consider $\alpha_i$ and $\beta_i$ be the fundamental translations of $E_i$. $\alpha_i:z\mapsto z+A_i$ and $\beta_i:z\mapsto z+B_i$. Let $X$ be a semi\mbox{-}equivelar map of type $[p_1^{r_1},\dots p_k^{r_k}]$. Then there exists a discrete subgroup $K_i$ of Aut($E_i$) with out any fixed element such that $X=E_i/K_i$. Let $p_i:E_i \to X$ be the polyhedral covering map.
By above description of $K_i$, it contains only translations and glide reflections. Since, $X$ is orientable so $K_i$ does not contain any glide reflections. Thus $K_i \le H_i$. Suppose $K_i = \langle \gamma_i, \delta_i \rangle$.
Let $\chi_i$ denotes the reflection about origin in $E_i$. Then $\chi_i \in {\rm Aut}(E_i)$. Consider the group $G_i = \langle \alpha_i, \beta_i, \chi_i \rangle \le {\rm Aut}(E_i)$. 
\begin{claim}
$K_i \trianglelefteq G_i$.
\end{claim}
To prove this it is enough to show that $\chi_i\circ\gamma_i \circ \chi_i^{-1}$ and $\chi_i\circ\delta_i \circ \chi_i^{-1} \in K_i$. We know that conjugation of a translation by reflection is translation by the reflected vector.
Let $\gamma_i$ and $\delta_i$ are translation by vectors $C_i$ and $D_i$ respectively. Then $\chi_i\circ\gamma_i \circ \chi_i^{-1}$ and $\chi_i\circ\delta_i \circ \chi_i^{-1}$ are translation by $-C_i$ and $-D_i$. Clearly these vectors are belongs to lattice of $K_i$. Our claim follows from this.

\smallskip
Case 1. Let $X$ is of type $[3^6], [6^3]$. Suppose $X =E_i/K_i$. $E_1$ has $12$ flag orbits by action of $H_1$. Under action of $G_1$, $E_1$ has $6$ flag orbits. Hence action of $G_i/K_i$ on flags of $X$ also gives same number of orbits. $G_i/K_i \le {\rm Aut}(X)$. Thus number of flag orbits of $X$ is less that or equals to $6$ for maps of type $[3^6]$ and $[6^3]$. This proves part (a) of Theorem \ref{no-of-orbits}.  

\smallskip
Case 2. Let $X$ be a semi-equivelar map of type $[4^4]$. Then by Proposition \ref{propo-1} we can assume $X = E_{2}/K_{2}$ for some subgroup $K_{2}$ of Aut($E_{2}$). Now $F(E_{2})$ has $4$ $G_{2}$ orbits. Hence $X$ also has $4$ $G_{2}/K_{2}\mbox{-}$orbits. As $G_{2}/K_{2} \le {\rm Aut}(X)$. Therefore number of ${\rm Aut}(X)\mbox-$orbits of $F(X)$ is less than or equals to $4$.

\smallskip
Case 3. Let $X$ be a semi-equivelar map of type $ [3^3,4^2]$ or $[3^2,4^1,3^1,4^1]$. Then by Proposition \ref{propo-1} we can assume $X = E_{11}/K_{11}$ or $E_5/K_5$ for some subgroup $K_{11}$ of Aut($E_{11}$) and $K_5$ of Aut($E_5$). Now $F(E_{11})$ and $F(E_5)$ has $10$ $G_{11}$ and $G_5$ orbits respectively. Hence $X$ also has $10$ $G_{i}/K_{i}\mbox{-}$orbits for $i=11,5$. As $G_{i}/K_{i} \le {\rm Aut}(X)$. Therefore number of ${\rm Aut}(X)\mbox-$orbits of $F(X)$ is less than or equals to $10$.

\smallskip
Case 4. Let $X$ be a semi-equivelar map of type $[3^1,6^1,3^1,6^1]$ or $[4^1,8^2]$. Then by Proposition \ref{propo-1} we can assume $X = E_{i}/K_{i}$ for some subgroup $K_{i}$ of Aut($E_{i}$) for $i=6,7$. Now $F(E_{i})$ has $12$ $G_{i}$ orbits. Hence $X$ also has $12$ $G_{i}/K_{i}\mbox{-}$orbits. As $G_{i}/K_{i} \le {\rm Aut}(X)$. Therefore number of ${\rm Aut}(X)\mbox-$orbits of $F(X)$ is less than or equals to $12$.

\smallskip
Case 5. Let $X$ be a semi-equivelar map of type $[3^1,12^2]$. Then by Proposition \ref{propo-1} we can assume $X = E_{8}/K_{8}$ for some subgroup $K_{8}$ of Aut($E_{8}$). Now $F(E_{8})$ has $18$ $G_{8}$ orbits. Hence $X$ also has $18$ $G_{8}/K_{8}\mbox{-}$orbits. As $G_{8}/K_{8} \le {\rm Aut}(X)$. Therefore number of ${\rm Aut}(X)\mbox-$orbits of $F(X)$ is less than or equals to $18$.

\smallskip
Case 6. Let $X$ be a semi-equivelar map of type $[3^1,4^1,6^1,4^1]$. Then by Proposition \ref{propo-1} we can assume $X = E_{9}/K_{9}$ for some subgroup $K_{9}$ of Aut($E_{9}$). Now $F(E_{9})$ has $24$ $G_{9}$ orbits. Hence $X$ also has $24$ $G_{9}/K_{9}\mbox{-}$orbits. As $G_{9}/K_{9} \le {\rm Aut}(X)$. Therefore number of ${\rm Aut}(X)\mbox-$orbits of $F(X)$ is less than or equals to $24$.

\smallskip
Case 7. Let $X$ be a semi-equivelar map of type $[3^4,6^1]$. Then by Proposition \ref{propo-1} we can assume $X = E_{10}/K_{10}$ for some subgroup $K_{10}$ of Aut($E_{10}$). Now $F(E_{10})$ has $30$ $G_{10}$ orbits. Hence $X$ also has $30$ $G_{10}/K_{10}\mbox{-}$orbits. As $G_{10}/K_{10} \le {\rm Aut}(X)$. Therefore number of ${\rm Aut}(X)\mbox-$orbits of $F(X)$ is less than or equals to $30$.

\smallskip
Case 8. Let $X$ be a semi-equivelar map of type $[4^1,6^1,12^1]$. Then by Proposition \ref{propo-1} we can assume $X = E_{11}/K_{11}$ for some subgroup $K_{11}$ of Aut($E_{11}$). Now $F(E_{11})$ has $36$ $G_{11}$ orbits. Hence $X$ also has $36$ $G_{11}/K_{11}\mbox{-}$orbits. As $G_{11}/K_{11} \le {\rm Aut}(X)$. Therefore number of ${\rm Aut}(X)\mbox-$orbits of $F(X)$ is less than or equals to $36$.
\end{proof}

\begin{proof}[Proof of Theorem \ref{thm-main1}]
Let $X_1$ be a semi-equivelar toroidal map of type $[3^6]$. Then by Proposition \ref{propo-1} we can assume that $X_1 = E_1/K_1$ for some fixed point free subgroup $K_1$ of Aut($X_1$). Thus $K_1$ consist of only translations and glide reflections. Since $X_1$ is orientable so $K_1$ contains only translations. Consider $H_1$, $K_1$ and $G_1$ as in proof of Theorem \ref{no-of-orbits}. Now $F(X_1)$ has $6$ $G_1\mbox{-}$orbits. Consider the group $G_1' = \langle \alpha_1, \beta_1, \chi_1, \rho_{OA} \rangle$. Clearly $F(E_1)$ has $3$ $G_1'$ orbits. Let $K_1 = \langle \gamma, \delta \rangle$. Now to get a cover of $X$ we need $L_1 \le {\rm Aut}(E_1)$ such that $G_1'/L_1$ is defined and $E_1/L_1$ has $3$ $G_1'/L_1$ orbits. For that we made the following.   
\begin{claim}\label{clm1}
There exists $m \in \ZZ$ such that $L_1 := \langle \gamma^m, \delta^m \rangle \trianglelefteq G_1'.$
\end{claim}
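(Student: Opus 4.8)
The plan is to prove normality by checking that each of the four generators of $G_1' = \langle \alpha_1, \beta_1, \chi_1, \rho_{OA}\rangle$ conjugates the two generators $\gamma^m, \delta^m$ of $L_1$ back into $L_1$; since $L_1$ is generated by $\gamma^m$ and $\delta^m$, this suffices for $L_1 \trianglelefteq G_1'$. Because $K_1 = \langle \gamma, \delta\rangle$ is a lattice of translations it is abelian, so the translational generators $\alpha_1, \beta_1$ (as well as $\gamma, \delta$ themselves) commute with $\gamma^m$ and $\delta^m$; these generators therefore fix $L_1$ and require no further argument. Thus the entire content of the claim is concentrated in the two involutions $\chi_1$ and $\rho_{OA}$.

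For the involutions I would reuse the fact already exploited in the proof of $K_i \trianglelefteq G_i$: conjugation of a translation $t_v$ by an isometry fixing $O$ with linear part $A$ equals the translation $t_{Av}$. For $\chi_1$, whose linear part is $-I$, this gives $\chi_1 \gamma^m \chi_1^{-1} = \gamma^{-m}$ and $\chi_1 \delta^m \chi_1^{-1} = \delta^{-m}$, both in $L_1$; hence $\chi_1$ normalizes $L_1$ for every $m$. For $\rho_{OA}$, with linear part the reflection $R$ in the axis $OA$, the first step is to show that $\rho_{OA}$ conjugates $\gamma$ and $\delta$ into $K_1$, i.e. $\rho_{OA}\gamma\rho_{OA}^{-1} = \gamma^{a}\delta^{b}$ and $\rho_{OA}\delta\rho_{OA}^{-1} = \gamma^{c}\delta^{d}$ for integers $a,b,c,d$. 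Granting this, commutativity of $K_1$ yields $\rho_{OA}\gamma^m\rho_{OA}^{-1} = (\gamma^{a}\delta^{b})^m = (\gamma^m)^{a}(\delta^m)^{b} \in L_1$, and likewise for $\delta^m$, so $L_1$ is $\rho_{OA}$-invariant for every $m$. Consequently the claim reduces to the single assertion that $\rho_{OA}$ normalizes the deck lattice $K_1$, that is $R(K_1) = K_1$.

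The step I expect to be the main obstacle is precisely this reflection-invariance $R(K_1) = K_1$; every other verification is automatic and independent of $m$, and indeed once $R(K_1)=K_1$ holds the computation above shows $L_1 \trianglelefteq G_1'$ for \emph{all} $m \ge 1$. To establish it I would use that $\rho_{OA} \in \mathrm{Aut}(E_1)$ normalizes the full translation lattice $H_1$, that $\langle \chi_1, \rho_{OA}\rangle$ is a finite point group of order $4$, and that $K_1$ is the finite-index sublattice whose quotient is the $3$-orbit cover demanded just before the claim: for such a cover to carry the order-$4$ point group one needs the deck lattice to be stable under $R$, so the basis vectors $C, D$ of $K_1$ must be carried by $R$ to integral combinations of $C, D$, which is exactly $\rho_{OA}\gamma\rho_{OA}^{-1}, \rho_{OA}\delta\rho_{OA}^{-1} \in K_1$. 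With $R(K_1)=K_1$ in hand, the exponent $m$ is then free: each power sublattice $L_1 = \langle \gamma^m, \delta^m\rangle = mK_1$ is $R$-invariant and normal, and $m$ may be chosen to prescribe the number of sheets $[K_1 : L_1] = m^2$ of the covering $E_1/L_1 \to X_1$, the freedom that feeds the later $n$-sheeted statements. The concluding bookkeeping — that $E_1/L_1$ has exactly $3$ orbits under $G_1'/L_1$ — then follows because the order-$4$ point group descends faithfully to the quotient.
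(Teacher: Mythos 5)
Your reduction is correct, and in fact sharper than the paper's own computation: since conjugating a translation $t_v$ by an isometry with linear part $R$ yields $t_{Rv}$, and the lattice of $L_1=\langle\gamma^m,\delta^m\rangle$ is $m\Lambda$ where $\Lambda=\ZZ C+\ZZ D$ is the lattice of $K_1$, we have $R(m\Lambda)=mR(\Lambda)$, so $L_1\trianglelefteq G_1'$ for \emph{some} $m$ if and only if it already holds for $m=1$, i.e.\ if and only if $R(\Lambda)=\Lambda$. The fatal gap is your final step: $R(K_1)=K_1$ is false for a general deck lattice, and your argument for it is circular. You justify $R$-stability by saying that "for such a cover to carry the order-$4$ point group one needs the deck lattice to be stable under $R$" --- but $K_1$ is the deck lattice of the \emph{given} map $X_1$, an arbitrary toroidal map of type $[3^6]$ with no symmetry assumption, while the object that must be $R$-stable is the deck lattice $L_1$ of the cover to be constructed, which is exactly what the claim asks to produce. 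Concretely, take $\Lambda=\ZZ(2A_1+B_1)+\ZZ B_1$ (or a large integer multiple of it, to guarantee polyhedrality of the quotient). Since $\rho_{OA}$ swaps $A_1$ and $B_1$, we get $R(2A_1+B_1)=A_1+2B_1=\frac{1}{2}(2A_1+B_1)+\frac{3}{2}B_1\notin\Lambda$, and by your own scaling remark no exponent $m$ can repair this. So your proposal establishes the claim only for those $K_1$ that happen to be reflection-invariant.

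It is worth noting that your (correct) observation that normality of $\langle\gamma^m,\delta^m\rangle$ is independent of $m$ actually contradicts the paper's proof, which chooses $m=|ad-bc|$ after solving a linear system: the displayed solutions there are erroneous. Solving the same system correctly gives $p=(bd-ac)/(ad-bc)$ and $t=-p$ \emph{independent} of $m_1,m_2$, with $q,s$ depending only on the ratio $m_1/m_2$, so no choice of $m_1=m_2$ forces integrality (for the lattice above, $p=\frac{1}{2}$ for all $m$). The standard repair, in the spirit of Drach et al., abandons powers of $\gamma,\delta$ altogether: set $n=[H_1:K_1]$ and $L_1=\langle\alpha_1^n,\beta_1^n\rangle$. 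Then $nH_1\subseteq K_1$ because the finite abelian group $H_1/K_1$ has exponent dividing $n$, so $E_1/L_1$ covers $X_1$; and $L_1$, being the $n$-th multiple of the full translation lattice, is invariant under the entire point group of $E_1$, hence normal in $G_1'$ (indeed in all of ${\rm Aut}(E_1)$). If you replace your final, circular step by this choice of sublattice, the remainder of your argument --- checking generators, the commutation of translations, and the conjugation formula for $\chi_1$ and $\rho_{OA}$ --- goes through verbatim.
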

Suppose $L_1 = \langle \gamma^{m_1}, \delta^{m_2} \rangle$. We show that there exists suitable $m_1, m_2$ such that $L_1 \trianglelefteq G_1'$. It turns out that we can take $m_1 = m_2$.
To satisfy $L_1 \trianglelefteq G_1'$ it is enough to show that $\rho_1\gamma^{m_1}\rho_1^{-1}, \rho_1\delta^{m_2}\rho_1^{-1} \in L_1$. It is known that conjugation of a translation by rotation or reflection is also a translation by rotated or reflected vector. Since $\gamma$ and $\delta$ are translation by vectors $C$ and $D$ respectively so $\gamma^{m_1}$ and $\delta^{m_2}$ are translation by vectors $m_1C$ and $m_2D$ respectively. Hence $\rho_1\gamma^{m_1}\rho_1^{-1}$ and $\rho_1\delta^{m_2}\rho_1^{-1}$ are translation by the vectors $C'$ and $D'$ respectively. Where $C' = \rho_1(m_1C) = \rho_1(m_1(aA_1 + bB_1)) = m_1a\rho_1(A_1) + m_1b\rho(B_1) = m_1aB_1 + m_1bA_1$ and similarly $D' = \rho_1(m_2D) = m_2cB_1 + m_2dA_1$.
Now these translations belong to $L_1$ if the vectors $C'$ and $D'$ belong to lattice of $L_1 = \ZZ(m_1C) + \ZZ(m_2D)$.
Let $C', D' \in \ZZ(m_1C) + \ZZ(m_2D)$. Then $\exists~ p, q, s, t \in \ZZ$ such that 
$$C' = p(m_1C) + q(m_2D), ~ D' = s(m_1C) + t(m_2D).$$
Putting expressions of $C', D', C, D$ in above equations we get,
$$(bm_1 - pam_1 - qcm_2)A_1 + (am_1 - pbm_1 - qdm_2)B_1 = 0$$
$$(cm_2 - sbm_1 - tdm_2)A_1 + (dm_2 - sam_1 - tcm_2)B_1 = 0.$$
Since, $\{A_1, B_1\}$ is a linearly independent set we have,
$$pam_1 + qcm_2 = bm_1, ~ pbm_1 + qdm_2 = am_1, ~sbm_1 + tdm_2 = cm_2, ~ sam_1 + tcm_2 = dm_2.$$
Now as rank($L_1$)$=2$ so $m_1, m_2 \neq 0$. Dividing the above system by $m_1m_2$ we get,
$$\frac{pa}{m_2} + \frac{qc}{m_1} = \frac{b}{m_2}, ~\frac{pb}{m_2} + \frac{qd}{m_1} = \frac{a}{m_2}, ~\frac{sb}{m_2} + \frac{td}{m_1} = \frac{c}{m_1}, ~\frac{sa}{m_2} + \frac{tc}{m_1} = \frac{d}{m_1}. $$
Now consider $p, q, s, t$ as variables. We can treat above system as a system of linear equations.  We can write this system in matrix form as follows.
$$\begin{bmatrix} a/m_2 & c/m_1 &0&0\\ 
b/m_2 & d/m_1 &0 &0 \\
0& 0& a/m_2 &c/m_1 \\
0&0& b/m_2&d/m_1\end{bmatrix} 
\begin{bmatrix}
p\\q\\s\\t
\end{bmatrix} =
\begin{bmatrix}
b/m_2\\a/m_2\\d/m_1\\c/m_1
\end{bmatrix}$$
Now $C,D$ are linearly independent thus $ad-bc \neq 0$. Hence the coefficient matrix of the above system has non zero determinant. Therefore the system has an unique solution. After solving we get,
$$p = \frac{m_1^2m_2(d-a)b}{(ad-bc)^2}
,~~q = \frac{m_1^2m_2(a^2-bc)}{(ad-bc)^2},~~s = \frac{m_1m_2^2(d^2-bc)}{(ad-bc)^2}
,~~t = \frac{m_1m_2^2(a-d)c}{(ad-bc)^2}.$$ 
Now if we take $m_1 = m_2  = |ad-bc| = m$(say) then $p, q, s, t \in \ZZ$.
Let $L_1 := \langle \gamma^m, \delta^m \rangle$. Then we have $L_1 \trianglelefteq G_1'.$ Hence our Claim \ref{clm1} proved.
\begin{claim}\label{normal}
$K_1/L_1 \trianglelefteq {\rm Aut}(M_1/L_1)$.
\end{claim}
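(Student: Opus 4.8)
The plan is to realize ${\rm Aut}(E_1/L_1)$ as a quotient of a subgroup of ${\rm Aut}(E_1)$ and then reduce the claim to a statement about translation lattices (reading $M_1/L_1$ as $E_1/L_1$, where $E_1$ is the planar tiling). First I would record the purely group-theoretic preliminaries. Since $X_1$ is orientable, $K_1$ consists entirely of translations and is therefore abelian; as $L_1 = \langle \gamma^m, \delta^m \rangle \le K_1$, the subgroup $L_1$ is automatically normal in $K_1$, so $K_1/L_1$ is a well-defined subgroup of the deck group of the regular covering $E_1/L_1 \to X_1 = E_1/K_1$, and hence of ${\rm Aut}(E_1/L_1)$. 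Likewise every element of $K_1$ commutes with $L_1$, so $K_1 \le N$, where $N := N_{{\rm Aut}(E_1)}(L_1)$, and by definition of $N$ we have $L_1 \trianglelefteq N$.

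Next I would identify ${\rm Aut}(E_1/L_1)$ with $N/L_1$. Because $E_1$ tiles the simply connected plane, it is the universal cover of the toroidal map $E_1/L_1$ in the category of maps, and $E_1 \to E_1/L_1$ is a regular covering with deck group $L_1$. Consequently any automorphism of $E_1/L_1$ \emph{lifts} to an automorphism of $E_1$ that normalizes $L_1$, any two lifts differing by an element of $L_1$; invoking the rigidity of the Archimedean tiling (Proposition \ref{theo:plane}), these lifts are precisely the elements of ${\rm Aut}(E_1)$, so ${\rm Aut}(E_1/L_1) \cong N/L_1$. By the correspondence theorem, since $L_1 \trianglelefteq N$ and $L_1 \le K_1 \le N$, the assertion $K_1/L_1 \trianglelefteq {\rm Aut}(E_1/L_1)$ is equivalent to $K_1 \trianglelefteq N$.

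It then remains to prove $K_1 \trianglelefteq N$, which is the routine lattice computation. Let $\phi \in N$ have linear (point-group) part $R$, so $\phi(x) = Rx + v$. As already used in the proof of Theorem \ref{no-of-orbits}, conjugation sends the translation by a vector $w$ to the translation by $Rw$. Writing $\Lambda$ for the translation lattice of $K_1$, the lattice of $L_1$ is $m\Lambda$; since $\phi$ normalizes $L_1$, its linear part satisfies $R(m\Lambda) = m\Lambda$, and because scaling by $m$ commutes with the linear map $R$ this forces $R\Lambda = \Lambda$. Hence $\phi K_1 \phi^{-1} = K_1$, giving $K_1 \trianglelefteq N$ and the claim.

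The main obstacle is the middle step: justifying ${\rm Aut}(E_1/L_1) = N/L_1$, i.e. that no automorphism of the toroidal quotient fails to arise from a planar symmetry normalizing $L_1$. This is exactly where the universal-cover property and the rigidity of the tiling (Proposition \ref{theo:plane}) are needed; once the identification is secured, the remaining argument uses only that $L_1$ is the uniform scaling $m\Lambda$ of the lattice of $K_1$, and so transfers verbatim to each type $E_i$.
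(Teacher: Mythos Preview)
Your proposal is correct and follows essentially the same route as the paper: both arguments identify ${\rm Aut}(E_1/L_1)$ with $N_{{\rm Aut}(E_1)}(L_1)/L_1$ and then reduce to showing $K_1 \trianglelefteq N_{{\rm Aut}(E_1)}(L_1)$ via the lattice observation that $R(m\Lambda)=m\Lambda$ forces $R\Lambda=\Lambda$. The only difference is cosmetic: where you justify the identification ${\rm Aut}(E_1/L_1)\cong N/L_1$ directly via the universal-cover lifting property and Proposition~\ref{theo:plane}, the paper simply cites \cite{drach:2019} for this fact, and where you phrase the key step in terms of the linear part $R$ acting on lattices, the paper writes out the same computation componentwise as $(\rho\gamma^m\rho^{-1})(0)=n_1\gamma^m(0)+n_2\delta^m(0)\Rightarrow(\rho\gamma\rho^{-1})(0)=n_1\gamma(0)+n_2\delta(0)$.
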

Let $\rho \in {\rm Nor_{Aut(M_1)}}(L_1)$. Then $\rho \gamma^m \rho^{-1}, \rho \delta^m \rho^{-1} \in L_1$. $\rho \gamma^m \rho^{-1}$ is translation by the vector $(\rho \gamma^m \rho^{-1})(0)$. That is 
$(\rho \gamma^m \rho^{-1})(0) \in$ lattice of $L_1$. Thus there exists $n_1, n_2$ such that $(\rho \gamma^m \rho^{-1})(0) = n_1\gamma^m(0) + n_2 \delta^m(0)$. As $K_1$ is generated by $\gamma$ and $\delta$ so $\rho \gamma \rho^{-1}(0) = n_1\gamma(0) + n_2 \delta(0)$. Thus $\rho \gamma \rho^{-1} \in K_1.$ similarly, $\rho \delta \rho^{-1} \in K_1.$ Therefore $\rho \in {\rm Nor}(K_1) \implies {\rm Nor_{Aut(M_1)}}(L_1) \le {\rm Nor_{Aut(M_1)}}(K_1) \implies K_1 \trianglelefteq {\rm Nor_{Aut(M_1)}}(L_1) \implies K_1/L_1 \trianglelefteq {\rm Nor_{Aut(M_1)}}(L_1)/L_1 $. From \cite{drach:2019} we know ${\rm Aut}(M_1/L_1) = {\rm Nor_{Aut(M_1)}}(L_1)/L_1 .$ This proves  Claim \ref{normal}.

Now by Claim \ref{clm1} $G_1'/L_1$ is a group and acts on $E(M_1/L_1).$ Clearly $E(M_1/L_1)$ has $2~~ G_1'/L_1\mbox{-}$orbits. Since $L_1$ contains two independent vectors, it follows that $Y_1 := M_1/L_1 $ is a toroidal map and $v+L_1 \mapsto v+K_1$ is a covering $\eta : Y_1 \to X$.
Our next aim is to show that $Y$ is a $2$-orbital map. For that we need the following,
\begin{result}\label{quo}
	Let $L \trianglelefteq K$ and $K$ acts on a topological space $E$. Then $\dfrac{E/L}{K/L}$ is homeomorphic to $E/K$ and $\phi:\dfrac{E/L}{K/L} \to E/K$ defined by $\Big(\dfrac{K}{L}\Big)(Lv) \mapsto Kv ~\forall~ v \in E$ is a homeomorphism.
\end{result}
Let $p:M_1/L_1 \to \dfrac{M_1/L_1}{K_1/L_1} = M_1/K_1$ be the quotient map.

\begin{claim}\label{diag}
Given $\alpha \in {\rm Aut}(M_1/L_1)={\rm Nor_{Aut(M_1)}}(L_1)/L_1$ there exists $\widetilde{\alpha} \in {\rm Aut}(M_1/K_1)$ such that $p\circ \alpha = \widetilde{\alpha} \circ p.$
\end{claim}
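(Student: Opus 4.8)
The plan is to push $\alpha$ down through the covering $p$, and the whole argument hinges on the normality already secured in Claim~\ref{normal}. First I would fix representatives: using the identification ${\rm Aut}(M_1/L_1) = {\rm Nor_{Aut(M_1)}}(L_1)/L_1$, write $\alpha = \rho L_1$ for some $\rho \in {\rm Nor_{Aut(M_1)}}(L_1)$. Next I would invoke Result~\ref{quo} to regard $p$ as exactly the quotient of $M_1/L_1$ by the action of $N := K_1/L_1$, so that $M_1/K_1 = (M_1/L_1)/(K_1/L_1)$ and a point of $M_1/K_1$ is the $N$-orbit $K_1 v$ of $L_1 v$. With this set-up the candidate descent is forced: define $\widetilde{\alpha}(K_1 v) = K_1 \rho(v)$, equivalently $\widetilde{\alpha}(Nz) = N\alpha(z)$ for $z \in M_1/L_1$.

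The key step — and the only genuine obstacle — is to check that $\widetilde{\alpha}$ is well defined. Suppose $K_1 v = K_1 w$, i.e.\ $Nz = Nz'$ for $z = L_1 v$, $z' = L_1 w$; then $z' = \nu z$ for some $\nu \in N$. Applying $\alpha$ gives $\alpha(z') = (\alpha\nu\alpha^{-1})\,\alpha(z)$, and here is where Claim~\ref{normal} enters: since $N = K_1/L_1 \trianglelefteq {\rm Aut}(M_1/L_1)$, the conjugate $\alpha\nu\alpha^{-1}$ again lies in $N$, whence $N\alpha(z') = N\alpha(z)$ and the value of $\widetilde{\alpha}$ is independent of the chosen representative. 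Without normality of $N$ this conjugate could leave $N$ and the rule would be ill posed, so Claim~\ref{normal} is precisely what makes the descent possible. The intertwining relation $p\circ\alpha = \widetilde{\alpha}\circ p$ then holds by construction, since $p(\alpha(z)) = N\alpha(z) = \widetilde{\alpha}(Nz) = \widetilde{\alpha}(p(z))$.

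Finally I would verify that $\widetilde{\alpha}$ is a genuine automorphism of the polyhedral map $M_1/K_1$. Running the same construction on $\alpha^{-1}$ yields $\widetilde{\alpha^{-1}}$, and the commuting relations give $\widetilde{\alpha^{-1}}\circ\widetilde{\alpha} = \mathrm{id}$ and $\widetilde{\alpha}\circ\widetilde{\alpha^{-1}} = \mathrm{id}$, so $\widetilde{\alpha}$ is a bijection. Because $p$ is a polyhedral covering (carrying vertices, edges and faces onto vertices, edges and faces and preserving incidence) and $\alpha$ is a cellular automorphism of $M_1/L_1$, the identity $p\circ\alpha = \widetilde{\alpha}\circ p$ forces $\widetilde{\alpha}$ to send cells of $M_1/K_1$ to cells and to preserve incidence; hence $\widetilde{\alpha}\in{\rm Aut}(M_1/K_1)$, completing the claim. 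Everything past the well-definedness step is formal — a standard consequence of $p$ being the quotient by a normal subgroup together with the compatibility of this descent with composition — so I expect no further difficulty once Claim~\ref{normal} is in hand.
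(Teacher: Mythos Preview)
Your proposal is correct and follows essentially the same route as the paper: identify $M_1/K_1$ with $(M_1/L_1)/(K_1/L_1)$ via Result~\ref{quo}, use the normality from Claim~\ref{normal} to show that $\alpha$ sends $K_1/L_1$-orbits to $K_1/L_1$-orbits (equivalently, that the descent rule is well defined), and then obtain the inverse by applying the same construction to $\alpha^{-1}$. The paper additionally verifies continuity of $\widetilde{\alpha}$ via evenly covered neighbourhoods, but this is inessential for the combinatorial notion of automorphism used here and your treatment is adequate.
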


By Result \ref{quo} we can think $M_1/K_1$ as $\dfrac{M_1/L_1}{K_1/L_1}.$ We show that $\alpha$ takes orbits to orbits for the action of $K_1/L_1$ on $M_1/L_1$.
Let $\mathcal{O}(\ol{v})$ denotes $K_1/L_1$-orbit of $\ol{v} \in M_1/L_1.$ Then
\begin{equation*}
    \begin{split}
        \alpha(\mathcal{O}(\ol{v})) &= \alpha\left(\frac{K_1}{L_1}(\ol{v})\right)  \\&=
        \frac{K_1}{L_1}(\alpha(\ol{v}))~[{\rm since }~ \alpha \frac{K_1}{L_1} = \frac{K_1}{L_1} \alpha~ {\rm because}~ K_1/L_1 \trianglelefteq {\rm Aut(}M_1/L_1{\rm )}]  \\& = \mathcal{O}(\alpha(\ol{v}))
        \end{split}
\end{equation*}
Therefore by universal property of quotient there exists $\widetilde{\alpha} : M_1/K_1 \to M_1/K_1$ such that the following diagram commutes.

\begin{figure}[h]
\begin{center}
\begin{tikzcd}
M_1/L_1 \arrow[r, "\alpha"] \arrow[d, "p"]                                      & M_1/L_1 \arrow[d, "p"]                      \\
\frac{M_1/L_1}{K_1/L_1} \arrow[r, "\widetilde{\alpha}", dashed] & \frac{M_1/L_1}{K_1/L_1}

\end{tikzcd}

\caption{{Diagram }}\label{diagram1}
\end{center}
\end{figure}
Now we have to show that $\widetilde{\alpha}$ is an automorphism.
Clearly $\widetilde{\alpha}$ is onto. Let $\ol{v_1}, \ol{v_2} \in Y.$ Suppose $\mathcal{O}(\ol{v_1})$ and $\mathcal{O}(\ol{v_2})$
be $K_1/L_1$ orbits of $Y$. Now,
\begin{equation*}
    \begin{split}
        \widetilde{\alpha}(\mathcal{O}(\ol{v_1})) = \widetilde{\alpha}(\mathcal{O}(\ol{v_2})) & \implies \mathcal{O}(\alpha(\ol{v_1})) = \mathcal{O}(\alpha(\ol{v_2})) \\ & \implies \exists ~\omega \in K_1/L_1 ~{\rm such~ that }~ \omega\alpha(\ol{v_1}) = \alpha(\ol{v_2}) \\&\implies \alpha^{-1}\omega \alpha (\ol{v_1}) = \ol{v_2} \\& \implies \mathcal{O}(\ol{v_1}) = \mathcal{O}(\ol{v_2}) ~[\alpha^{-1}\omega \alpha \in K_1/L_1 ~{\rm since}~ K_1/L_1 \trianglelefteq {\rm Aut}(M_1/L_1)]
    \end{split}
\end{equation*}
Therefore, $\widetilde{\alpha}$ is one-one. Now, by the commutativity of the diagram and using the fact that $p$ is a covering map one can see that $\widetilde{\alpha}$ takes vertices to vertices, edges to edges, faces to faces. It also preserves incidence relations. Let $v\in $ Domain of $\widetilde{\alpha}$. Since $p$ is a covering map there exists a neighbourhood $N$ of $v$ which is evenly covered by $p$. Let $U$ be a component of $p^{-1}(N)$. Then $p:U\to N$ is a homeomorphism. Therefore $(p \circ \alpha)|_U = \widetilde{\alpha}|_N$. As $p$ and $\alpha$ both are continuous so is $\widetilde{\alpha}|_N$. Thus $\widetilde{\alpha}$ is continuous. 
Now, replacing $\alpha$ by $\alpha^{-1}$ we get $\widetilde{\beta}$ in place of $\widetilde{\alpha}$. $\widetilde{\beta}$ has same properties as of $\widetilde{\alpha}$. Now, $\widetilde{\alpha} \circ \widetilde{\beta} = id_{M_1/K_1} = \widetilde{\beta} \circ \widetilde{\alpha}$. Therefore $\widetilde{\alpha}^{-1} = \widetilde{\beta}$. So $\widetilde{\alpha}$ is a homeomorphism.
Hence $\widetilde{\alpha}$ is an automorphism of $M_1/K_1$.

\begin{claim}\label{clm5}
If $\alpha \in$ Aut$(M_1/L_1) \setminus \frac{G_1'}{L_1}$ then $\alpha(\mathcal{O}) = \mathcal{O}$ for all $\frac{G_1'}{L_1}$-orbits $\mathcal{O}$ of $M_1/L_1$.
\end{claim}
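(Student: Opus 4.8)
The plan is to reduce Claim~\ref{clm5} to the already-established descent of automorphisms to the base map $X=M_1/K_1$, followed by the identification of $G_1'/K_1$ with $\mathrm{Aut}(X)$. First I would invoke the structural facts already in hand. By Claim~\ref{normal} we have $K_1/L_1 \trianglelefteq \mathrm{Aut}(M_1/L_1)$, so any $\alpha \in \mathrm{Aut}(M_1/L_1)$ normalises the deck group $K_1/L_1$ of the covering $\eta\colon Y_1 \to X$; hence by Claim~\ref{diag} it descends to a well-defined $\widetilde\alpha \in \mathrm{Aut}(X)$ with $\eta\circ\alpha = \widetilde\alpha\circ\eta$. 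This is precisely where the commutative diagram of Figure~\ref{diagram1} does its work.

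Next I would pin down the shape of the orbits. Since $K_1 \leq \langle\alpha_1,\beta_1\rangle \leq G_1'$, we have $K_1/L_1 \leq G_1'/L_1$, so every $G_1'/L_1$-orbit $\mathcal{O}$ on $Y_1$ is a union of $K_1/L_1$-orbits and is therefore saturated by the covering. Writing $\overline{\mathcal{O}} = \eta(\mathcal{O})$ for the corresponding $G_1'/K_1$-orbit on $X$ (using Result~\ref{quo} to view $X$ as $(M_1/L_1)/(K_1/L_1)$), one obtains $\mathcal{O} = \eta^{-1}(\overline{\mathcal{O}})$. Thus the desired equality $\alpha(\mathcal{O})=\mathcal{O}$ is equivalent to $\widetilde\alpha(\overline{\mathcal{O}}) = \overline{\mathcal{O}}$, because $\eta\circ\alpha = \widetilde\alpha\circ\eta$ forces $\alpha(\eta^{-1}(\overline{\mathcal{O}})) = \eta^{-1}(\widetilde\alpha(\overline{\mathcal{O}}))$.

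The decisive step is then to show $G_1'/K_1 = \mathrm{Aut}(X)$. Since $\langle\alpha_1,\beta_1\rangle$ is the full translation lattice of $E_1$, all translations of $X$ already lie in $G_1'/K_1$, so the equality reduces to matching point groups: the reflection $\rho_{OA}$ is chosen precisely so that $\langle\chi_1,\rho_{OA}\rangle$ is the point group stabilising the lattice $K_1$, whence $\mathrm{Aut}(X)=\mathrm{Nor}_{\mathrm{Aut}(E_1)}(K_1)/K_1$ collapses to $G_1'/K_1$. Granting this, $\widetilde\alpha\in\mathrm{Aut}(X)=G_1'/K_1$ fixes each $G_1'/K_1$-orbit setwise, since an orbit is invariant under its whole defining group, and pulling back through $\eta$ yields $\alpha(\mathcal{O})=\eta^{-1}(\widetilde\alpha(\overline{\mathcal{O}}))=\eta^{-1}(\overline{\mathcal{O}})=\mathcal{O}$. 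Observe that the hypothesis $\alpha\notin G_1'/L_1$ is not actually needed to fix the orbits; it only records that such $\alpha$ are the genuinely new automorphisms whose effect must be controlled, so that together with $G_1'/L_1 \leq \mathrm{Aut}(Y_1)$ one may conclude that $Y_1$ has exactly as many flag orbits as $G_1'/L_1$ produces.

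The main obstacle I anticipate is exactly the identification $G_1'/K_1=\mathrm{Aut}(X)$, that is, showing the point group of the lattice $K_1$ is no larger than $\langle\chi_1,\rho_{OA}\rangle$. This is where the semiregularity of $X$ recorded in Proposition~\ref{uni} and the adapted choice of $\rho_{OA}$ are essential: using the subgroup structure of the point group $D_6$ of $E_1$, one checks that the only proper overgroup of the Klein four-group $\langle\chi_1,\rho_{OA}\rangle$ is the whole of $D_6$, which would force $X$ to be regular rather than genuinely a $3$-orbit map; ruling that possibility out for the maps to which the construction is applied secures the containment. Were $\mathrm{Aut}(X)$ strictly larger, $\widetilde\alpha$ could cyclically permute the $G_1'/K_1$-orbits and the claim would fail, so this point-group computation is the real heart of the matter.
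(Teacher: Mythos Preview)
Your approach matches the paper's: descend $\alpha$ to $\widetilde\alpha$ on $X = M_1/K_1$ via Claim~\ref{diag}, argue that $\widetilde\alpha$ preserves the relevant orbits on $X$, and pull back through $\eta$. The one structural difference is that the paper works with $G_1/K_1$-orbits on the base rather than $G_1'/K_1$-orbits: having placed $K_1 a_1$ and $K_1 a_2$ in the same $G_1/K_1$-orbit, it finds $g\in G_1$ and $k\in K_1$ with $(k\circ g)(a_1)=a_2$, and then uses $G_1\subset G_1'$ and $K_1\subset G_1'$ to get $k\circ g\in G_1'$, thereby sidestepping the question of whether $K_1\trianglelefteq G_1'$ that your formulation with $G_1'/K_1$ implicitly needs.

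At the decisive step the paper simply \emph{asserts} that $\widetilde\alpha$ does not move elements between distinct $G_1/K_1$-orbits, without justification; you are right that this is where the real content lies, and your proposed fix --- identifying $\mathrm{Nor}_{\mathrm{Aut}(E_1)}(K_1)$ via the subgroup lattice of the point group $D_6$ --- is exactly what is needed to close the gap, though it tacitly assumes $X$ lies in the orbit-class for which $G_1'$ has been tailored (an assumption the paper also makes implicitly through its case analysis). Your observation that the hypothesis $\alpha\notin G_1'/L_1$ plays no role is correct; the paper's argument does not use it either.
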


Let $\alpha \in $Aut$(M_1/L_1) \setminus \frac{G_1'}{L_1}$ and $\widetilde{\alpha}$ be the induced automorphism on $M_1/K_1$ as in Fig. \ref{diagram1}. Suppose, $\mathcal{O}_1$ and $\mathcal{O}_2$ be two $G_1'/L_1$-orbits of $M_1/L_1$. Let $a_1, a_2 \in M_1$ be such that $L_1a_1 \in \mathcal{O}_1$ and $L_1a_2 \in \mathcal{O}_2$ and $\alpha(L_1a_1) = L_1a_2$.
Since, $p(L_1a_i) = K_1a_i$ by commutativity of the diagram in Claim \ref{diag} we get $\widetilde{\alpha}(K_1a_1) = K_1a_2$. 
As $\widetilde\alpha$ does not take an element of $G_1/K_1$-orbit to an element of some other orbit so $K_1a_1$ and $K_1a_2$ belong to same $G_1/K_1$-orbit of $M_1/K_1$.
Therefore, there exists $gK_1 \in G_1/K_1$ such that $(gK_1)(K_1a_1) = K_1a_2 $.\\
Now, Since $(gK_1)(K_1a_1) = K_1(ga_1)$ thus $(gK_1)(K_1a_1) = K_1a_2 \implies K_1(ga_1) = K_1a_2 \implies \exists ~k \in K_1$ such that $(k \circ g)(a_1) = a_2$.\\
Let $g':= k \circ g \in G_1'$ then $g'(a_1) = a_2.$
Consider $g'L_1 \in G_1'/L_1$. Then $(g'L_1)(L_1a_1) = L_1a_2.$
This contradicts our assumption that $L_1a_1$ and $L_1a_2$ belong to two different $G_1'/L_1$-orbit of $M_1/L_1.$ This proves  Claim \ref{clm5}.\\
Let $R_1$ and $R_2$ denote the reflections of $E_1$ about $OA$ and $OA_1$ 
Now if the given map $X_1$ is $6$-semiregular then consider the group $G_4 = \langle \alpha_1, \beta_1, \chi_1, R_1\circ R_2 \rangle$ instead of $G_1'$. Then proceeding as above we get $Y_1 := E_1/L_3$ is a $2$-semiregular cover of $X_1$, where $L_3 = \langle \gamma^{m_2}, \delta^{m_2} \rangle \trianglelefteq G_4$ for some $m_2 \in \ZZ$. 
Now if the given map $X_1$ is $3$ or $2$ semiregular then consider the group $G_3 = \langle \alpha_1, \beta_1, \chi_1, R_1, R_2 \rangle$. Then proceeding as above we get $Y_2 := E_1/L_2$ is a $1$-semiregular cover of $X_1$, where $L_2 = \langle \gamma^{m_1}, \delta^{m_1} \rangle \trianglelefteq G_3$ for some $m_1 \in \ZZ$.

Now if two conjugate subgroups of Aut($E_1$) acts on $F(E_1)$ then they give same number of orbits. So to find how many different orbital maps are there for a given type we need to check number of orbits under action of a group taken from each conjugacy class of Aut($E_1$). 
Now we know that automorphism groups on the plane are of the form $T \rtimes S$ where $T$ is the translation group and $S$ is stabilizer of origin for the action of Aut($E_1$) on $E_1$. 
If $X=E_1/K$ is a toroidal map then Aut($X$)$=$Nor($K$)$/K$. 
Since $K$ contains only translations so Nor($K$) always contains $T\rtimes \langle \chi \rangle$.  
Thus one needs to determine which symmetries in $S$ normalizes $K$. Now $S=\langle R_1,R_2,\chi \rangle$. For type $[3^6]$ there are $4$ subgroups of $S$ up to conjugates. They are $\langle \chi \rangle, \langle \chi, R_2 \rangle, \langle \chi, R_1\circ R_2 \rangle, S$. Hence Aut($X$) is of the form $(T \rtimes K')/K$ where $K'$ is conjugate to one of the above groups. Thus it is enough to see number of orbits under action of $T \rtimes K'$ on $F(E_1)$. In above proof the groups $G_1',G_2,G_3$ are nothing but $T \rtimes K'$ for different $K'$. we did not get $4$ or $5$ orbits under action of these groups. Thus there does not exists $4$ or $5$-semiregular toroidal map of type $[3^6]$.
This completes the prove of part (a) of Theorem \ref{thm-main1}.

Now let $X_2$ be a semi-equivelar toroidal map of type $[4^4]$. Then by Proposition \ref{propo-1} we can assume that $X_2 = E_2/K_2$ for some fixed point free subgroup $K_2$ of Aut($X_2$). Thus $K_2$ consist of only translations and glide reflections. Since $X_2$ is orientable so $K_2$ contains only translations. Consider $H_2$, $K_2$ and $G_2$ as in proof of Theorem \ref{no-of-orbits}.  Now $F(X_2)$ has $4$ $G_2\mbox{-}$orbits. Consider the group $G_3 = \langle \alpha_2, \beta_2, \chi_2, \rho_{OA} \rangle$. Clearly $F(E_2)$ has $2$ $G_3$ orbits. Now proceeding in same way as in previous case we can prove that  $Y_2 := E_2/L_2$ is a $2$-semiregular cover of $X_2$, where $L_2 = \langle \gamma_2^{m_2}, \delta_2^{m_2} \rangle \trianglelefteq G_2$ for some $m_2 \in \ZZ$.
Now if the given map $X_2$ is $2$-semiregular then consider the group $G_3 = \langle \alpha_2, \beta_2, \chi_2, \rho_{OA}, \rho_{OB_1} \rangle$. Then proceeding as above we get $Y_1 := E_2/L_3$ is a $1$-semiregular cover of $X_2$, where $L_3 = \langle \gamma_2^{m_1}, \delta_2^{m_1} \rangle \trianglelefteq G_3$ for some $m_1 \in \ZZ$. 
There are $5$ subgroups of point stabilizer $S$ up to conjugates. They are $\langle \chi \rangle, \langle \chi, R_1 \rangle, \langle \chi, R_2 \rangle, \langle \chi,R_1\circ R_2 \rangle, S$.
Now by the same type of argument as in previous case we can see that there does not exists a $3$-semiregular map of type $[4^4]$. This completes the proof of part (b) in Theorem \ref{thm-main1}.
\end{proof}
Now we proof a series of lemmas and use them to proof other parts of Theorems   \ref{no-of-orbits} and \ref{thm-main1}.

For a given semi-equivelar map $X = E/K$ consider $\widehat{X}$ be the associated equivelar map defined by $\widehat{X} = \widehat{E}/K$, where $\widehat{E}$ be the associated equivelar tessellation obtained from $E$ as shown in Figures \ref{fig-E_1} to \ref{fig-E_4}.

\begin{lemma}\label{X-9}
Let $X_9 = E_{9}/G_9$ is semiregular toroidal map of type $[3^1,4^1,6^1,4^1]$. Then $\widehat{X_9}$ is $m_9$-semiregular if and only if $X_9$ is $4m_9$-semiregular.
\end{lemma}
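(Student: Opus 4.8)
The plan is to reduce the statement to two facts: that $X_9$ and $\widehat{X_9}$ carry the \emph{same} automorphism group, and that the passage $E_9\rightsquigarrow\widehat{E_9}$ multiplies the number of flag orbits by exactly $4$. By Proposition \ref{propo-1} I may write $X_9 = E_9/K$ and $\widehat{X_9} = \widehat{E_9}/K$ for one and the same fixed-point-free translation subgroup $K$ (this is precisely why $\widehat{X}$ is defined with the same $K$), and it suffices to prove the unconditional identity $\#\{\text{flag orbits of }X_9\} = 4\,\#\{\text{flag orbits of }\widehat{X_9}\}$, from which the ``iff'' is immediate.

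First I would record that the construction of $\widehat{E_9}$ from $E_9$ (the red tessellation in Figure \ref{fig-E_9}) is performed at the symmetry centres of $E_9$, so it is equivariant under every isometry in $\text{Aut}(E_9)$ and is reversible ($E_9$ is recovered from $\widehat{E_9}$ by the same canonical recipe). Hence $\text{Aut}(E_9)$ and $\text{Aut}(\widehat{E_9})$ coincide as subgroups of the isometry group of the plane. Since both toroidal maps are quotients by the \emph{same} $K$, the identity $\text{Aut}(M/K)=\text{Nor}(K)/K$ from \cite{drach:2019} yields $\text{Aut}(X_9) = \text{Nor}(K)/K = \text{Aut}(\widehat{X_9}) =: \bar A$.

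Next I would set up a flag map. The construction assigns to each flag of $E_9$ a well-defined flag of $\widehat{E_9}$, giving an $\text{Aut}(E_9)$-equivariant surjection $\pi\colon F(E_9)\to F(\widehat{E_9})$. Because the automorphism group of any polyhedral map acts \emph{freely} on its flags, both $F(E_9)$ and $F(\widehat{E_9})$ decompose into free $\text{Aut}(E_9)$-orbits; by Proposition \ref{uni} there are $4$ of them on $F(E_9)$ and $1$ on $F(\widehat{E_9})$. An equivariant map between two free orbits of the same group is a bijection, so $\pi$ restricts to a bijection on each of the four source orbits and is therefore exactly $4$-to-$1$. As $\pi$ is in particular $K$-equivariant and $K$ acts freely on flags, it descends to a $\bar A$-equivariant $4$-to-$1$ surjection $\bar\pi\colon F(X_9)\to F(\widehat{X_9})$, whence $|F(X_9)| = 4\,|F(\widehat{X_9})|$.

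Finally I would count orbits. Applying the free-action fact to the two toroidal maps, $\bar A$ acts freely on $F(X_9)$ and on $F(\widehat{X_9})$, so the number of flag orbits of each map equals $|F(\cdot)|/|\bar A|$. Dividing and substituting $|F(X_9)| = 4\,|F(\widehat{X_9})|$ gives
\[
\#\{\text{flag orbits of } X_9\} \;=\; 4\cdot \#\{\text{flag orbits of } \widehat{X_9}\},
\]
which is exactly the asserted equivalence. The main obstacle is the geometric input of the second paragraph: one must verify carefully that $\widehat{E_9}$ genuinely inherits the \emph{full} symmetry group of $E_9$ (and no more), and that the flag assignment $\pi$ is globally well defined and equivariant. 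Once that is secured, the torsor bijection giving the factor $4$ and the free-action orbit count are both formal.
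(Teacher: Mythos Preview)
Your argument is correct and is a genuinely different, more uniform route than the paper's. The paper argues case by case: it lists the admissible values $m_9\in\{1,2,3,6\}$, for each one pins down (up to conjugacy) which point stabiliser $K'\le S$ occurs in $\mathrm{Aut}(\widehat{X_9})=(H_9\rtimes K')/K_9$, and then recounts flag orbits of $E_9$ under that specific group to obtain $4m_9$; the converse is handled the same way in reverse. Your proof instead establishes the single multiplicative identity $\#\{\text{flag orbits of }X_9\}=4\cdot\#\{\text{flag orbits of }\widehat{X_9}\}$ once and for all, from $\mathrm{Aut}(X_9)=\mathrm{Aut}(\widehat{X_9})$, freeness of the flag action, and the $4{:}1$ flag count. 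This is cleaner, avoids the case split, and would port verbatim to Lemmas \ref{X-8}, \ref{X-11}, \ref{X-7} with the constant $4$ replaced by $3,6,4$. What the paper's approach buys in exchange is the explicit identification of $K'$ in each case, which it reuses downstream when constructing the covers in Theorem \ref{thm-main1}.

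Two small points. First, you do not actually need to \emph{construct} a natural flag map $\pi$; its existence follows abstractly once you know both flag sets are free $A$-sets (pick basepoints in each of the four $A$-orbits of $F(E_9)$ and send them to a basepoint of the single orbit in $F(\widehat{E_9})$). Equivalently, bypass $\pi$ entirely: $F(E_9)$ is four copies of $A$ as an $A$-set and $F(\widehat{E_9})$ is one, so after quotienting by the same $K$ the cardinalities stand in ratio $4{:}1$. Second, the step $\mathrm{Aut}(\widehat{E_9})=\mathrm{Aut}(E_9)$ really does need the ``no more'' direction; here it holds because the vertices of $\widehat{E_9}$ are the hexagon centres of $E_9$, so the two tilings share the same translation lattice and the same $6$-, $3$- and $2$-fold centres. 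The paper uses this equality as well (in the line ${\rm Nor}_{\mathrm{Aut}(\widehat{E_9})}(K_9)={\rm Nor}_{\mathrm{Aut}(E_9)}(K_9)$), just without comment.
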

\begin{proof}
Here by Theorem \ref{no-of-orbits} and \ref{thm-main1} we can conclude that  $m_4 \in \{1,2,3,6\}$. The case $m_4 = 1$ discussed in \cite{drach:2019}. Here we discuss $m_4 = 2,3$ and $6$.
Let $G_9 = \langle \alpha_9, \beta_9, \chi_9 \rangle$. Where $\alpha_9 : z \mapsto z+A_9$, $\beta_9 : z \mapsto z+B_9$ and $\chi_9$ be the $180$ degree rotation about origin, see Figure \ref{fig-E_9}. $\widehat{E_9}$ is of type $[3^6]$.
First suppose $m_9 = 6$.
Let $\widehat{X_9}$ is $6$-semiregular. ${\rm Aut}(\widehat{X_9}) = {\rm Nor_{Aut(\widehat{E_9})}}(K_9)/K_9 ={\rm Nor_{Aut(E_9)}}(K_9)/K_9 = {\rm Aut}(X_9) $. Now, $G_9 \le {\rm Nor_{Aut(\widehat{E_9})}}(K_9)$. Action of $G_9$ on $E(\widehat{E_9})$ also gives $6$ flag orbits. Hence $\widehat{X_9}$ to be $6$-semiregular we must have ${\rm Nor_{Aut(\widehat{E_9})}}(K_9) = G_9$ or some conjugate of $G_9$. 
Now under the action of $G_9$, $F(E_9)$ has $24$ orbits. Symmetries of $E_9$ which fixes origin are also symmetries of $\widehat{E_9}$. Hence $F(E_9)$ has $24$ ${\rm Nor_{Aut(E_9)}}(K_9)$-orbits. Thus $X_9$ is $24$-semiregular. \\
Now, let $m_9 = 3$. Then Aut($\widehat{X_9}$) is of the form $(H_9 \rtimes K')/K_9$ where $K'$ is conjugate to $\langle \chi_9,R_2 \rangle$ or $\langle \chi_9, R_1R_2 \rangle$. Since $m_9=3$ $K'$ is conjugate to $\langle \chi_9,R_2 \rangle$. One can see that under action of this group $F(X_9)$ has $12$ flag orbits. Thus $X_9$ is $12$-semiregular map.\\
Now, let $m_9 = 2$. Then Aut($\widehat{X_9}$) is of the form $(H_9 \rtimes K')/K_9$ where $K'$ is conjugate to $\langle \chi_9,R_2\circ R_1 \rangle$. One can see that under action of this group $F(E_9)$ has $8$ orbits. Thus $X_9$ is $8$-semiregular map.\\
Conversely, let $X_9$ is $24$ orbital. Then $G_9/K_9 \le $ Aut($X_9$). These symmetries are also present in ${\rm Aut}(\widehat{E_9})$ and the group $G_9$ gives $6$ orbits on $F(\widehat{E_9})$. Since, Aut($X_9$) does not change $G_9/K_9$-orbits of $F(X_9)$ so Aut($\widehat{X_9}$) will also not change $G_9/K_9$-orbits of $F(\widehat{X_9})$. Thus $\widehat{X_9}$ is $6$-orbital.
Now suppose $X_9$ is $12$ orbital. Then its automorphism group will contain either $R_1$ or $R_2$ along with $G_9$. With these symmetries $F(\widehat{X_9})$ will have $3$ orbits. Hence $\widehat{X_9}$ is $3$-semiregular.\\
Now suppose $X_9$ is $8$ orbital. Then its automorphism group will contain either $R_1$ and $R_2$ along with $G_9$. With these symmetries $F(\widehat{X_9})$ will have $2$ orbits. Hence $\widehat{X_9}$ is $2$-semiregular. This completes the proof of Lemma \ref{X-9}.
\end{proof}

\begin{lemma}\label{X-8}
Let $X_8 = E_{8}/G_8$ is semiregular toroidal map of type $[3^1,12^2]$. Then $\widehat{X_8}$ is $m_8$-semiregular if and only if $X_8$ is $3m_8$-semiregular.
\end{lemma}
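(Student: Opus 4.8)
The plan is to transcribe the proof of Lemma \ref{X-9} almost verbatim, replacing the associated equivelar type $[3^6]$ by $[6^3]$ and the multiplier $4$ by $3$. First I would record, via Theorems \ref{no-of-orbits} and \ref{thm-main1} together with Proposition \ref{uni}(c), that the admissible values are $m_8 \in \{1,2,3,6\}$, the case $m_8 = 1$ being exactly the statement already proved in \cite{drach:2019}; so it suffices to treat $m_8 = 2,3,6$. I would set $G_8 = \langle \alpha_8, \beta_8, \chi_8 \rangle$, where $\alpha_8 : z \mapsto z + A_8$ and $\beta_8 : z \mapsto z + B_8$ are the fundamental translations and $\chi_8$ is the $180$ degree rotation about the origin $O$ of Figure \ref{fig-E_8}, and observe that $\widehat{E_8}$ is of type $[6^3]$. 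The structural input, exactly as in Lemma \ref{X-9}, is that a symmetry of $E_8$ fixing $O$ is also a symmetry of $\widehat{E_8}$ and conversely, so that $\mathrm{Nor}_{\mathrm{Aut}(E_8)}(K_8) = \mathrm{Nor}_{\mathrm{Aut}(\widehat{E_8})}(K_8)$; combined with the identity $\mathrm{Aut}(X_8) = \mathrm{Nor}_{\mathrm{Aut}(E_8)}(K_8)/K_8$ from \cite{drach:2019}, this lets the flag-orbit counts on $F(E_8)$ and $F(\widehat{E_8})$ be compared through one and the same normalizer.

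For the forward implication I would argue by cases. When $m_8 = 6$, the map $\widehat{X_8}$ attains the maximal orbit number for type $[6^3]$ precisely when $\mathrm{Nor}_{\mathrm{Aut}(\widehat{E_8})}(K_8)$ equals $G_8$ (up to conjugacy), since $G_8$ already produces $6$ flag orbits on $\widehat{E_8}$; the same $G_8$ then produces $18$ orbits on $F(E_8)$, so $X_8$ is $18$-semiregular. When $m_8 = 3$ the normalizer has the form $(H_8 \rtimes K')/K_8$ with $K'$ conjugate to the order-$4$ subgroup $\langle \chi_8, R_2 \rangle$ (here $R_1, R_2$ are the reflections of $E_8$ about $OA$ and $OA_1$), and a direct count gives $12/4 = 3$ orbits on $\widehat{E_8}$ and $36/4 = 9$ on $F(E_8)$. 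When $m_8 = 2$, $K'$ is conjugate to the order-$6$ subgroup $\langle \chi_8, R_1 \circ R_2 \rangle$, giving $2$ orbits on $\widehat{E_8}$ and $6$ on $F(E_8)$. In every case the multiplier is $3$, so $X_8$ is $3m_8$-semiregular.

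For the converse I would run the same three cases in reverse. If $X_8$ is $18$-semiregular then $G_8/K_8 \le \mathrm{Aut}(X_8)$ consists of symmetries lying in $\mathrm{Aut}(\widehat{E_8})$ that split $F(\widehat{E_8})$ into $6$ orbits; since $\mathrm{Aut}(X_8)$ does not merge any $G_8/K_8$-orbit of $F(X_8)$, neither does $\mathrm{Aut}(\widehat{X_8})$ merge the corresponding orbits of $F(\widehat{X_8})$, whence $\widehat{X_8}$ is $6$-semiregular. The cases where $X_8$ is $9$-semiregular and $6$-semiregular are identical: the extra reflection $R_2$ (respectively the extra rotation $R_1 \circ R_2$) present in $\mathrm{Aut}(X_8)$ forces $F(\widehat{X_8})$ to have $3$ (respectively $2$) orbits. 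The normality and quotient bookkeeping (the analogues of Claims \ref{clm1} and \ref{clm5} and of Result \ref{quo} in the proof of Theorem \ref{thm-main1}) carries over unchanged.

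The main obstacle, and the only place where genuine computation enters, is the explicit flag-orbit enumeration on $E_8$ and $\widehat{E_8}$ under each candidate point-stabilizer $K'$: I must list the subgroups of the origin-stabilizer $S = \langle R_1, R_2, \chi_8 \rangle$ up to conjugacy, confirm that exactly the classes yielding $\{1,2,3,6\}$ orbits on $[6^3]$ occur, and verify the matching counts $18, 9, 6, 3$ on $[3^1,12^2]$. This geometric count is what pins the factor down as $3$; everything else is formal transfer from Lemma \ref{X-9}.
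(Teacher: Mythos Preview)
Your approach is essentially identical to the paper's: both transcribe the proof of Lemma~\ref{X-9} verbatim, running through the cases $m_8\in\{2,3,6\}$ by identifying the point-stabilizer $K'$ (up to conjugacy) that realises each value, counting flag orbits of $E_8$ and $\widehat{E_8}$ under $H_8\rtimes K'$, and reading off the factor $3$.

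One slip to fix: the associated equivelar tessellation $\widehat{E_8}$ is of type $[3^6]$, not $[6^3]$. In Figure~\ref{fig-E_8} the red lattice joins the centres of adjacent $12$-gons, and each such centre has six neighbours, giving the triangular tessellation; this is also what the paper states explicitly. Your argument is unaffected by this misidentification --- the two types are dual, share the same translation lattice, the same point-stabilizer $D_6$, the same admissible flag-orbit set $\{1,2,3,6\}$ from Theorems~\ref{no-of-orbits}(a) and~\ref{thm-main1}(a), and the same count of $12$ flags per fundamental translation domain --- so all of your arithmetic ($12/|K'|$ versus $36/|K'|$) and all of the structural claims go through unchanged once you replace $[6^3]$ by $[3^6]$.
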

\begin{proof}
Here by Theorem \ref{no-of-orbits} and \ref{thm-main1} we can conclude that  $m_8 \in \{1,2,3,6\}$. The case $m_8 = 1$ discussed in \cite{drach:2019}. Here we discuss $m_8 = 2,3$ and $6$.
Let $G_8 = \langle \alpha_8, \beta_8, \chi_8 \rangle$. Where $\alpha_8 : z \mapsto z+A_8$, $\beta_8 : z \mapsto z+B_8$ and $\chi_8$ be the $180$ degree rotation about origin, see Figure \ref{fig-E_8}. $\widehat{E_8}$ is of type $[3^6]$.
First suppose $m_8 = 6$.
Let $\widehat{X_8}$ is $6$-semiregular. Then by similar reason as in Lemma \ref{X-9}  we must have ${\rm Nor_{Aut(\widehat{E_8})}}(K_8) = G_8$ or some conjugate of $G_8$. 
Now under the action of $G_8$, $F(E_8)$ has $18$ orbits. Symmetries of $E_8$ which fixes origin are also symmetries of $\widehat{E_8}$. Hence $F(E_8)$ has $18$ ${\rm Nor_{Aut(E_8)}}(K_8)$-orbits. Thus $X_8$ is $18$-semiregular. \\
Now, let $m_8 = 3$. Then Aut($\widehat{X_8}$) is of the form $(H_8 \rtimes K')/K_8$ where $K'$ is conjugate to $\langle \chi_8,R_2 \rangle$ or $\langle \chi_8, R_1R_2 \rangle$. Since $m_8=3$ $K'$ is conjugate to $\langle \chi_8,R_2 \rangle$. One can see that under action of this group $F(X_8)$ has $9$ flag orbits. Thus $X_8$ is $9$-semiregular map.\\
Now, let $m_8 = 2$. Then Aut($\widehat{X_8}$) is of the form $(H_8 \rtimes K')/K_8$ where $K'$ is conjugate to $\langle \chi_8,R_2\circ R_1 \rangle$. One can see that under action of this group $F(E_8)$ has $6$ orbits. Thus $X_8$ is $6$-semiregular map.\\
Conversely, let $X_8$ is $18$-semiregular. Then $G_8/K_8 \le $ Aut($X_8$). These symmetries are also present in ${\rm Aut}(\widehat{E_8})$ and the group $G_8$ gives $6$ orbits on $F(\widehat{E_8})$. Since, Aut($X_8$) does not change $G_8/K_8$-orbits of $F(X_8)$ so Aut($\widehat{X_8}$) will also not change $G_8/K_8$-orbits of $F(\widehat{X_8})$. Thus $\widehat{X_8}$ is $6$-semiregular.
Now suppose $X_8$ is $9$-semiregular. Then its automorphism group will contain either $R_1$ or $R_2$ along with $G_8$. With these symmetries $F(\widehat{X_8})$ will have $3$ orbits. Hence $\widehat{X_8}$ is $3$-semiregular.\\
Now suppose $X_8$ is $6$-semiregular. Then its automorphism group will contain either $R_1$ and $R_2$ along with $G_8$. With these symmetries $F(\widehat{X_8})$ will have $2$ orbits. Hence $\widehat{X_8}$ is $2$-semiregular. This completes the proof of Lemma \ref{X-8}.
\end{proof}

\begin{lemma}\label{X-11}
Let $X_{11} = E_{11}/G_{11}$ is semiregular toroidal map of type $[4^1,6^1,12^1]$. Then $\widehat{X_{11}}$ is $m_{11}$-semiregular if and only if $X_{11}$ is $6m_{11}$-semiregular.
\end{lemma}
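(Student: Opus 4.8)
The plan is to follow exactly the template established in Lemmas \ref{X-9} and \ref{X-8}, replacing the multiplicities used there ($4$ and $3$) by the multiplicity $6$ appropriate to the type $[4^1,6^1,12^1]$. Write $G_{11} = \langle \alpha_{11}, \beta_{11}, \chi_{11} \rangle$, where $\alpha_{11}\colon z \mapsto z + A_{11}$ and $\beta_{11}\colon z \mapsto z + B_{11}$ are the fundamental translations and $\chi_{11}$ is the $180^{\circ}$ rotation about the origin (see Figure \ref{fig-E_11}), and note that the associated equivelar tessellation $\widehat{E_{11}}$ is of type $[3^6]$, with $R_1, R_2$ denoting the two reflections of $E_{11}$ in the axes through the origin. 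Two facts drive the whole argument: first, $G_{11}$ always normalizes $K_{11}$ (since $K_{11}$ consists only of translations and conjugation of a translation by $\chi_{11}$ reverses its vector), so that $G_{11}/K_{11} \le \mathrm{Aut}(\widehat{X_{11}}) = \mathrm{Aut}(X_{11})$; and second, the identification $\mathrm{Aut}(\widehat{X_{11}}) = \mathrm{Nor}_{\mathrm{Aut}(\widehat{E_{11}})}(K_{11})/K_{11} = \mathrm{Nor}_{\mathrm{Aut}(E_{11})}(K_{11})/K_{11} = \mathrm{Aut}(X_{11})$, which holds because every origin-fixing symmetry of $E_{11}$ is also a symmetry of $\widehat{E_{11}}$.

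By Theorem \ref{no-of-orbits} and Theorem \ref{thm-main1} the possible values are $m_{11} \in \{1,2,3,6\}$, and I would treat the forward direction case by case, the case $m_{11}=1$ being already handled in \cite{drach:2019}. For $m_{11}=6$: since $6$ is the maximal flag-orbit count for type $[3^6]$, a $6$-semiregular $\widehat{X_{11}}$ forces $\mathrm{Nor}_{\mathrm{Aut}(\widehat{E_{11}})}(K_{11})$ to equal $G_{11}$ up to conjugacy; counting $G_{11}$-orbits on $F(E_{11})$ then gives exactly $36 = 6\times 6$, so $X_{11}$ is $36$-semiregular. For $m_{11}=3$ the group $\mathrm{Aut}(\widehat{X_{11}})$ is of the form $(H_{11}\rtimes K')/K_{11}$ with $K'$ conjugate to $\langle \chi_{11}, R_2\rangle$, under which $F(X_{11})$ has $18$ orbits; for $m_{11}=2$ one has $K'$ conjugate to $\langle \chi_{11}, R_2\circ R_1\rangle$, under which $F(E_{11})$ has $12$ orbits. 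In each case the count is exactly $6m_{11}$.

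For the converse I would run the three reductions in the opposite direction. If $X_{11}$ is $36$-semiregular then $G_{11}/K_{11} \le \mathrm{Aut}(X_{11})$, these symmetries also lie in $\mathrm{Aut}(\widehat{E_{11}})$ and give $6$ orbits on $F(\widehat{E_{11}})$, and since $\mathrm{Aut}(X_{11})$ preserves the $G_{11}/K_{11}$-orbit decomposition of $F(X_{11})$ the same holds for $\widehat{X_{11}}$, so $\widehat{X_{11}}$ is $6$-semiregular. If $X_{11}$ is $18$-semiregular its automorphism group contains $R_1$ or $R_2$ in addition to $G_{11}$, and these force $F(\widehat{X_{11}})$ to collapse to $3$ orbits; if $X_{11}$ is $12$-semiregular the group contains both $R_1$ and $R_2$, collapsing $F(\widehat{X_{11}})$ to $2$ orbits. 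This establishes that $\widehat{X_{11}}$ is $m_{11}$-semiregular if and only if $X_{11}$ is $6m_{11}$-semiregular.

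The step I expect to be the real work is the orbit counting on $F(E_{11})$: verifying that under $G_{11}$ there are exactly $36$ flag orbits, that adjoining a single reflection $R_2$ (resp. the rotation $R_2\circ R_1$) brings this down to $18$ (resp. $12$), and, crucially, that the reflection symmetries of the $[3^6]$ tessellation $\widehat{E_{11}}$ lift to precisely these same symmetries of $E_{11}$, so that the orbit counts on $F(X_{11})$ and $F(\widehat{X_{11}})$ track each other by the constant factor $6$ across all cases. Once this bookkeeping is pinned down from Figure \ref{fig-E_11}, the remaining structure is identical to the proofs of Lemmas \ref{X-9} and \ref{X-8}.
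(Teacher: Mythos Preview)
Your proposal is correct and follows essentially the same approach as the paper's own proof: the same case split $m_{11}\in\{1,2,3,6\}$, the same identification $\mathrm{Aut}(\widehat{X_{11}})=\mathrm{Aut}(X_{11})$ via normalisers, the same subgroup analysis ($K'$ conjugate to $\langle\chi_{11}\rangle$, $\langle\chi_{11},R_2\rangle$, $\langle\chi_{11},R_2\circ R_1\rangle$), and the same orbit counts $36,18,12$ on $F(E_{11})$ versus $6,3,2$ on $F(\widehat{E_{11}})$. The paper's proof is in fact slightly terser than yours, simply invoking ``by similar reason as in Lemma~\ref{X-9}'' at each step rather than re-explaining why the normaliser argument goes through.
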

\begin{proof}
Here by Theorem \ref{no-of-orbits} and \ref{thm-main1} we can conclude that  $m_{11} \in \{1,2,3,6\}$. The case $m_{11} = 1$ discussed in \cite{drach:2019}. Here we discuss $m_{11} = 2,3$ and $6$.
Let $G_{11} = \langle \alpha_{11}, \beta_{11}, \chi_{11} \rangle$. Where $\alpha_{11} : z \mapsto z+A_{11}$, $\beta_{11} : z \mapsto z+B_{11}$ and $\chi_{11}$ be the $180$ degree rotation about origin, see Figure \ref{fig-E_11}. $\widehat{E_{11}}$ is of type $[3^6]$.
First suppose $m_{11} = 6$.
Let $\widehat{X_{11}}$ is $6$-semiregular. Then by similar reason as in Lemma \ref{X-9}  we must have ${\rm Nor_{Aut(\widehat{E_{11}})}}(K_{11}) = G_{11}$ or some conjugate of $G_{11}$. 
Now under the action of $G_{11}$, $F(E_{11})$ has $36$ orbits. Symmetries of $E_{11}$ which fixes origin are also symmetries of $\widehat{E_{11}}$. Hence $F(E_{11})$ has $36$ ${\rm Nor_{Aut(E_{11})}}(K_{11})$-orbits. Thus $X_{11}$ is $36$-semiregular. \\
Now, let $m_{11} = 3$. Then Aut($\widehat{X_{11}}$) is of the form $(H_{11} \rtimes K')/K_{11}$ where $K'$ is conjugate to $\langle \chi_{11},R_2 \rangle$ or $\langle \chi_{11}, R_1R_2 \rangle$. Since $m_{11}=3$ $K'$ is conjugate to $\langle \chi_{11},R_2 \rangle$. One can see that under action of this group $F(X_{11})$ has $18$ flag orbits. Thus $X_{11}$ is $18$-semiregular map.\\
Now, let $m_{11} = 2$. Then Aut($\widehat{X_{11}}$) is of the form $(H_{11} \rtimes K')/K_{11}$ where $K'$ is conjugate to $\langle \chi_{11},R_2\circ R_1 \rangle$. One can see that under action of this group $F(E_{11})$ has $12$ orbits. Thus $X_{11}$ is $12$-semiregular map.\\
Conversely, let $X_{11}$ is $36$-semiregular. Then $G_{11}/K_{11} \le $ Aut($X_{11}$). These symmetries are also present in ${\rm Aut}(\widehat{E_{11}})$ and the group $G_{11}$ gives $6$ orbits on $F(\widehat{E_{11}})$. By similar reason as above $\widehat{X_{11}}$ is $6$-semiregular.
Now suppose $X_{11}$ is $18$-semiregular. Then its automorphism group will contain either $R_1$ or $R_2$ along with $G_{11}$. With these symmetries $F(\widehat{X_{11}})$ will have $3$ orbits. Hence $\widehat{X_{11}}$ is $3$-semiregular.\\
Now suppose $X_{11}$ is $12$-semiregular. Then its automorphism group will contain either $R_1$ and $R_2$ along with $G_{11}$. With these symmetries $F(\widehat{X_{11}})$ will have $2$ orbits. Hence $\widehat{X_{11}}$ is $2$-semiregular. This completes the proof of Lemma \ref{X-11}.
\end{proof}

\begin{lemma}\label{X-5}
Let $X_5 = E_{5}/G_5$ is semiregular toroidal map of type $[3^2,4^1,3^1,4^1]$. If $\widehat{X_5}$ is $4$-semiregular then $X_5$ is $20$-semiregular.
\end{lemma}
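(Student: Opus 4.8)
The plan is to run the normaliser argument of Lemmas \ref{X-9}, \ref{X-8} and \ref{X-11}, the novelty being that here the associated equivelar tessellation $\widehat{E_5}$ is of type $[4^4]$ (the square cell drawn in red in Figure \ref{fig-E_5}) rather than $[3^6]$, and that the proportionality constant is $5$ rather than $4,3,6$. Write $G_5 = \langle \alpha_5, \beta_5, \chi_5\rangle$, where $\alpha_5\colon z\mapsto z+A_5$, $\beta_5\colon z\mapsto z+B_5$ are the fundamental translations of $E_5$ and $\chi_5$ is the $180^{\circ}$ rotation about the origin (Figure \ref{fig-E_5}); let $K_5=\langle\gamma_5,\delta_5\rangle$ be the defining lattice, so that $X_5=E_5/K_5$ and $\widehat{X_5}=\widehat{E_5}/K_5$. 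Deleting the snub decoration cannot destroy a symmetry, so every symmetry of $E_5$ is a symmetry of $\widehat{E_5}$ and hence
\[
{\rm Nor_{Aut(E_5)}}(K_5)\subseteq {\rm Nor_{Aut(\widehat{E_5})}}(K_5).
\]
In contrast to Lemma \ref{X-9}, I do \emph{not} expect equality here: $\widehat{E_5}$ is regular while $E_5$ is not, so the inclusion is typically strict.

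Assuming $\widehat{X_5}$ is $4$-semiregular, I would first invoke the classification for type $[4^4]$: by Theorem \ref{thm-main1}(b) and the conjugacy analysis of the point stabiliser $S=\langle\chi_5,R_1,R_2\rangle$ given there, $4$ is the maximal possible number of flag orbits and is reached exactly when no reflection normalises the lattice, i.e. ${\rm Nor_{Aut(\widehat{E_5})}}(K_5)=G_5$ up to conjugacy. Feeding this into the displayed inclusion, together with the reverse containment $G_5\subseteq {\rm Nor_{Aut(E_5)}}(K_5)$ (valid because $\chi_5$ normalises $K_5$, exactly as in the Claim in the proof of Theorem \ref{no-of-orbits}), forces ${\rm Nor_{Aut(E_5)}}(K_5)=G_5$. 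Consequently ${\rm Aut}(X_5)={\rm Nor_{Aut(E_5)}}(K_5)/K_5=G_5/K_5$.

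The last step is to count the $G_5$-orbits of $F(E_5)$, and the plan is a direct enumeration over one fundamental cell of $\langle\alpha_5,\beta_5\rangle$. The snub square tiling has, per cell, $4$ vertices, $4$ triangles, $2$ squares and $10$ edges; since each edge meets two faces and two vertices, this gives $4\cdot 10=40$ flags. A half-turn can fix no flag (it cannot fix a vertex, an incident edge and an incident face all at once), and the same holds for $\chi_5$ composed with any lattice translation; hence $\chi_5$ pairs the $40$ flags, giving $40/2=20$ orbits under $G_5$. Because $K_5\subseteq G_5$, these descend to exactly $20$ orbits of $F(X_5)$ under ${\rm Aut}(X_5)=G_5/K_5$, so $X_5$ is $20$-semiregular.

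The step I expect to be the main obstacle is precisely this orbit bookkeeping: the snub square tiling has $4$-fold centres and glide reflections (wallpaper type $p4g$), so one must check carefully that $\chi_5$ and the lattice produce no accidental identifications beyond the evident pairing, leaving exactly $20$ orbits. This same asymmetry explains why the statement is only an implication: from $X_5$ being $20$-semiregular one again gets ${\rm Nor_{Aut(E_5)}}(K_5)=G_5$, but the strictly larger group ${\rm Aut}(\widehat{E_5})$ may supply additional reflections normalising $K_5$, so $\widehat{X_5}$ could then have fewer than $4$ flag orbits; hence the converse can fail and the lemma is stated in one direction only.
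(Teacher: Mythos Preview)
Your proposal is correct and follows essentially the same normaliser argument as the paper: identify ${\rm Nor_{Aut(\widehat{E_5})}}(K_5)$ with $G_5$ (up to conjugacy) from the $4$-semiregularity of $\widehat{X_5}$, sandwich to get ${\rm Nor_{Aut(E_5)}}(K_5)=G_5$, and then read off $20$ flag orbits. The paper's proof simply asserts the count of $20$ orbits under $G_5$, whereas you supply the explicit bookkeeping ($4$ vertices, $10$ edges, hence $40$ flags per translation cell, halved by the free action of $\chi_5$); your additional remark on why the converse is not claimed is also accurate but goes beyond what the paper records.
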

\begin{proof}
Let $G_{5} = \langle \alpha_{5}, \beta_{5}, \chi_{5} \rangle$. Where $\alpha_{5} : z \mapsto z+A_{5}$, $\beta_{5} : z \mapsto z+B_{5}$ and $\chi_{5}$ be the $180$ degree rotation about origin, see Figure \ref{fig-E_5}. $\widehat{E_{5}}$ is of type $[4^4]$.
Let $\widehat{X_{5}}$ is $4$-semiregular. Then by similar reason as in Lemma \ref{X-9}  we must have ${\rm Nor_{Aut(\widehat{E_{5}})}}(K_{5}) = G_{5}$ or some conjugate of $G_{5}$. 
Now under the action of $G_{5}$, $F(E_{5})$ has $20$ orbits. Symmetries of $E_{5}$ which fixes origin are also symmetries of $\widehat{E_{5}}$. Hence $F(E_{5})$ has $20$ ${\rm Nor_{Aut(E_{5})}}(K_{5})$-orbits. Thus $X_{5}$ is $20$-semiregular.
\end{proof}

\begin{lemma}\label{X-6}
Let $X_6 = E_{6}/G_6$ is semiregular toroidal map of type $[4^1,8^2]$. Then $\widehat{X_6}$ is $4$-semiregular then $X_6$ is $12$-semiregular. 
\end{lemma}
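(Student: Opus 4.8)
The plan is to transcribe the argument of Lemma \ref{X-9}, specialised to its single maximal case, using that for the associated type $[4^4]$ the largest possible number of flag orbits on the torus is $4$ (Theorem \ref{no-of-orbits}(b)) while the planar $[4^4]$ tiling is regular (Proposition \ref{uni}(a)). First I would fix $G_6 = \langle \alpha_6, \beta_6, \chi_6 \rangle$, where $\alpha_6 \colon z \mapsto z+A_6$ and $\beta_6 \colon z \mapsto z+B_6$ are the fundamental translations of $E_6$ and $\chi_6$ is the $180^{\circ}$ rotation about the origin (see Figure \ref{fig-E_6}), and record that the associated equivelar tessellation $\widehat{E_6}$, spanned by the red square in Figure \ref{fig-E_6}, is of type $[4^4]$. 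Writing $X_6 = E_6/K_6$ and $\widehat{X_6} = \widehat{E_6}/K_6$ for the common translation group $K_6$, the $[4^4]$-theory then applies to $\widehat{X_6}$.

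The structural input I would invoke is the coincidence of automorphism groups. Since every symmetry of $E_6$ fixing the origin is simultaneously a symmetry of $\widehat{E_6}$ and conversely, and since the two tilings are quotiented by the same $K_6$, the normalizers agree, ${\rm Nor_{Aut(\widehat{E_6})}}(K_6) = {\rm Nor_{Aut(E_6)}}(K_6)$, so that ${\rm Aut}(\widehat{X_6}) = {\rm Aut}(X_6)$, exactly as in Lemma \ref{X-9}. Now assume $\widehat{X_6}$ is $4$-semiregular. Because $G_6/K_6 \le {\rm Aut}(\widehat{X_6})$ already cuts $F(\widehat{X_6})$ into $4$ orbits and $4$ is the maximum permitted by Theorem \ref{no-of-orbits}(b), any normalizer strictly larger than $G_6$ would fuse orbits and leave fewer than $4$; hence ${\rm Nor_{Aut(\widehat{E_6})}}(K_6)$ is $G_6$ up to conjugacy, and consequently ${\rm Aut}(X_6) = G_6/K_6$.

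It then remains to count the $G_6$-orbits of $F(E_6)$. For this I would appeal directly to the computation underlying Theorem \ref{no-of-orbits}(d), which records that $G_6$ acts on $F(E_6)$ with exactly $12$ orbits. Conceptually the jump from $4$ to $12$ is the flag-density ratio between the two tilings: as $X_6$ and $\widehat{X_6}$ are quotients by the same $K_6$, one has $|F(X_6)| = 3\,|F(\widehat{X_6})|$, the factor $3$ being the ratio of flags per fundamental domain of $[4^1,8^2]$ to that of $[4^4]$, and under the free action of ${\rm Aut}(X_6) = G_6/K_6$ this sends the $4$ orbits upstairs to $12$ orbits downstairs. Hence $F(X_6)$ splits into $12$ orbits, i.e. $X_6$ is $12$-semiregular.

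The step I expect to be the main obstacle is pinning down the orbit count $12$ for the $G_6$-action on $F(E_6)$ — equivalently, verifying that the reflections $R_1, R_2$ eliminated by the $4$-semiregularity hypothesis are exactly the symmetries of $\widehat{E_6}$ whose absence triples the orbit count in passing from $\widehat{E_6}$ to $E_6$. Everything else is a direct reuse of the normalizer-and-quotient machinery already established in the proof of Theorem \ref{thm-main1} and in Lemma \ref{X-9}, so no new conceptual ingredient is needed.
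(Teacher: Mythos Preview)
Your proposal is correct and follows essentially the same approach as the paper: identify $G_6 = \langle \alpha_6, \beta_6, \chi_6 \rangle$, note $\widehat{E_6}$ is of type $[4^4]$, use the $4$-semiregularity of $\widehat{X_6}$ to force ${\rm Nor_{Aut(\widehat{E_6})}}(K_6) = G_6$ up to conjugacy (exactly as in Lemma \ref{X-9}), and then invoke the count that $G_6$ has $12$ orbits on $F(E_6)$. Your flag-density heuristic for the factor $3$ is a pleasant gloss the paper does not include, and your claim that the point-stabilizers of $E_6$ and $\widehat{E_6}$ agree (``and conversely'') is stronger than what the paper uses---it only needs the inclusion ${\rm Aut}(E_6)_0 \subseteq {\rm Aut}(\widehat{E_6})_0$---but it is true here and does no harm.
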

\begin{proof}
Let $G_{6} = \langle \alpha_{6}, \beta_{6}, \chi_{6} \rangle$. Where $\alpha_{6} : z \mapsto z+A_{6}$, $\beta_{6} : z \mapsto z+B_{6}$ and $\chi_{6}$ be the $180$ degree rotation about origin, see Figure \ref{fig-E_6}. $\widehat{E_{6}}$ is of type $[4^4]$.
Let $\widehat{X_{6}}$ is $4$-semiregular. Then by similar reason as in Lemma \ref{X-9}  we must have ${\rm Nor_{Aut(\widehat{E_{6}})}}(K_{6}) = G_{6}$ or some conjugate of $G_{6}$. 
Now under the action of $G_{6}$, $F(E_{6})$ has $12$ orbits. Symmetries of $E_{6}$ which fixes origin are also symmetries of $\widehat{E_{6}}$. Hence $F(E_{6})$ has $12$ ${\rm Nor_{Aut(E_{6})}}(K_{6})$-orbits. Thus $X_{6}$ is $12$-semiregular.
\end{proof}

\begin{lemma}\label{X-7}
Let $X_{7} = E_{7}/G_{7}$ is semiregular toroidal map of type $[3^1,6^1,3^1,6^1]$. Then $\widehat{X_{7}}$ is $m_{7}$-semiregular if and only if $X_{7}$ is $4m_{7}$-semiregular.
\end{lemma}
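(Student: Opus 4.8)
The plan is to follow the proof of Lemma \ref{X-9} almost verbatim, since the present statement has exactly the same shape (an ``if and only if'' with multiplicative factor $4$) and the associated equivelar tessellation $\widehat{E_7}$ is again of type $[3^6]$, obtained by joining the centres of the hexagons of $E_7$ (the red triangulation in Figure \ref{fig-E_7}). First I would fix $G_7 = \langle \alpha_7, \beta_7, \chi_7 \rangle$, where $\alpha_7 : z \mapsto z + A_7$ and $\beta_7 : z \mapsto z + B_7$ are the fundamental translations indicated in Figure \ref{fig-E_7} and $\chi_7$ is the $180^\circ$ rotation about the origin. Writing $X_7 = E_7/K_7$ with $K_7$ fixed-point-free, orientability forces $K_7$ to consist of translations only, and the computation in the proof of Theorem \ref{no-of-orbits} identifies the number of $G_7$-orbits of $F(E_7)$; comparing it with the corresponding count for $F(\widehat{E_7})$ yields the factor $4$ asserted in the statement.

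For the forward implication I would run the four cases $m_7 \in \{1,2,3,6\}$ in parallel with Lemma \ref{X-9}. The identity $\mathrm{Aut}(\widehat{X_7}) = \mathrm{Nor}_{\mathrm{Aut}(\widehat{E_7})}(K_7)/K_7 = \mathrm{Nor}_{\mathrm{Aut}(E_7)}(K_7)/K_7 = \mathrm{Aut}(X_7)$, which holds by \cite{drach:2019} because every symmetry fixing the origin belongs to both $\mathrm{Aut}(E_7)$ and $\mathrm{Aut}(\widehat{E_7})$, lets me translate the hypothesis that $\widehat{X_7}$ is $m_7$-semiregular into the assertion that the normalizer has the form $(H_7 \rtimes K')/K_7$, where $K'$ is, up to conjugacy, one of $\langle \chi_7 \rangle$, $\langle \chi_7, R_2 \rangle$, $\langle \chi_7, R_1 R_2 \rangle$ or the full point-stabilizer $S = \langle R_1, R_2, \chi_7 \rangle$ according as $m_7 = 6, 3, 2, 1$; here $R_1$ and $R_2$ are the two reflections of $E_7$ that fix the origin. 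Since each such $K'$ fixes the origin it also lies in $\mathrm{Aut}(E_7)$, so it remains only to count the $(H_7 \rtimes K')$-orbits of $F(E_7)$ and to verify that this number equals $4m_7$, giving that $X_7$ is $4m_7$-semiregular.

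The converse is the mirror-image computation. Assuming $X_7$ is $4m_7$-semiregular, I would observe that $G_7/K_7 \le \mathrm{Aut}(X_7)$ and that the extra symmetries then forced into $\mathrm{Aut}(X_7)$ (namely $R_2$, or $R_1 R_2$, or both $R_1$ and $R_2$) all fix the origin, hence descend to symmetries of $\widehat{X_7}$ that collapse the flag-orbits of $F(\widehat{E_7})$ down to exactly $m_7$. The one ingredient that genuinely has to be redone for this type, rather than merely quoted from Lemma \ref{X-9}, is the classification up to conjugacy of the subgroups of the point-stabilizer $S$ for $[3^1,6^1,3^1,6^1]$ together with the precise $G_7$-orbit count on $F(E_7)$ produced by each; I expect this orbit bookkeeping to be the only real obstacle, because the group-theoretic scaffolding---the normality of $L_7 = \langle \gamma^m, \delta^m \rangle$ in the enlarged group, the quotient homeomorphism of Result \ref{quo}, and the induced-automorphism diagram of Claim \ref{diag}---transfers unchanged from the $[3^6]$ case already settled in Lemma \ref{X-9}.
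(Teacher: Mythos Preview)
Your proposal is correct and follows essentially the same approach as the paper: the same identification $\mathrm{Aut}(\widehat{X_7}) = \mathrm{Nor}_{\mathrm{Aut}(\widehat{E_7})}(K_7)/K_7 = \mathrm{Aut}(X_7)$, the same case split $m_7 \in \{2,3,6\}$ (with $m_7=1$ deferred to \cite{drach:2019}), the same classification of the point-stabilizer subgroup $K'$ up to conjugacy, and the same orbit count on $F(E_7)$ in each case. One small remark: the scaffolding you list at the end---the normality of $L_7 = \langle \gamma^m, \delta^m \rangle$, Result \ref{quo}, and Claim \ref{diag}---belongs to the proof of Theorem \ref{thm-main1}, not to Lemma \ref{X-9}, and is not actually invoked here; the lemma is purely an orbit-counting comparison and needs none of that machinery.
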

\begin{proof}
Here by Theorem \ref{no-of-orbits} and \ref{thm-main1} we can conclude that  $m_{7} \in \{1,2,3,6\}$. The case $m_{7} = 1$ discussed in \cite{drach:2019}. Here we discuss $m_7 = 2,3$ and $6$.
Let $G_7 = \langle \alpha_7, \beta_7, \chi_7 \rangle$. Where $\alpha_7 : z \mapsto z+A_7$, $\beta_7 : z \mapsto z+B_7$ and $\chi_7$ be the $180$ degree rotation about origin, see Figure \ref{fig-E_7}. $\widehat{E_7}$ is of type $[3^6]$.
First suppose $m_7 = 6$.
Let $\widehat{X_7}$ is $6$-semiregular. Then by similar reason as in Lemma \ref{X-9}  we must have ${\rm Nor_{Aut(\widehat{E_7})}}(K_7) = G_7$ or some conjugate of $G_7$. 
Now under the action of $G_7$, $F(E_7)$ has $24$ orbits. Symmetries of $E_7$ which fixes origin are also symmetries of $\widehat{E_7}$. Hence $F(E_7)$ has $24$ ${\rm Nor_{Aut(E_7)}}(K_7)$-orbits. Thus $X_7$ is $24$-semiregular. \\
Now, let $m_7 = 3$. Then Aut($\widehat{X_7}$) is of the form $(H_7 \rtimes K')/K_7$ where $K'$ is conjugate to $\langle \chi_7,R_2 \rangle$ or $\langle \chi_7, R_1R_2 \rangle$. Since $m_7=3$ $K'$ is conjugate to $\langle \chi_7,R_2 \rangle$. One can see that under action of this group $F(X_7)$ has $12$ flag orbits. Thus $X_7$ is $12$-semiregular map.\\
Now, let $m_7 = 2$. Then Aut($\widehat{X_7}$) is of the form $(H_7 \rtimes K')/K_7$ where $K'$ is conjugate to $\langle \chi_7,R_2\circ R_1 \rangle$. One can see that under action of this group $F(E_7)$ has $8$ orbits. Thus $X_7$ is $8$-semiregular map.\\
Conversely, let $X_7$ is $24$-semiregular. Then $G_7/K_7 \le $ Aut($X_7$). These symmetries are also present in ${\rm Aut}(\widehat{E_7})$ and the group $G_7$ gives $6$ orbits on $F(\widehat{E_7})$. By similar reason as above $\widehat{X_7}$ is $6$-semiregular.
Now suppose $X_7$ is $12$-semiregular. Then its automorphism group will contain either $R_1$ or $R_2$ along with $G_7$. With these symmetries $F(\widehat{X_7})$ will have $3$ orbits. Hence $\widehat{X_7}$ is $3$-semiregular.\\
Now suppose $X_7$ is $8$-semiregular. Then its automorphism group will contain either $R_1$ and $R_2$ along with $G_7$. With these symmetries $F(\widehat{X_7})$ will have $2$ orbits. Hence $\widehat{X_7}$ is $2$-semiregular. This completes the proof of Lemma \ref{X-7}.
\end{proof}

\begin{lemma}
Let $X_{10} = E_{10}/G_{10}$ is semiregular toroidal map of type $[3^4,6^1]$. Then $\widehat{X_{10}}$ is $6$-semiregular if and only if $X_9$ is $30$-semiregular.
\end{lemma}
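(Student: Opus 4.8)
The plan is to follow the template already established in Lemmas \ref{X-9} through \ref{X-7}, reading the ``$X_9$'' in the stated conclusion as the intended $X_{10}$ (the factor $5$ between $\widehat{X_{10}}$ and $X_{10}$ forces the right-hand side to be $X_{10}$). Write $G_{10} = \langle \alpha_{10}, \beta_{10}, \chi_{10}\rangle$, where $\alpha_{10}$ and $\beta_{10}$ are the two fundamental translations $z \mapsto z + A_{10}$ and $z \mapsto z + B_{10}$ of $E_{10}$ and $\chi_{10}$ is the rotation by $180^\circ$ about the origin (see Figure \ref{fig-E_10}), and recall that the associated equivelar tessellation $\widehat{E_{10}}$ is of type $[3^6]$. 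The entire argument rests on two orbit-count inputs that I would record first, directly from Figure \ref{fig-E_10}: under the action of $G_{10}$ the flag set $F(E_{10})$ splits into exactly $30$ orbits, while $F(\widehat{E_{10}})$ splits into exactly $6$ orbits. This is the source of the factor $30/6 = 5$ concealed in the statement.

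For the forward direction, I would assume $\widehat{X_{10}}$ is $6$-semiregular. As in Lemma \ref{X-9}, using the identity $\mathrm{Aut}(\widehat{X_{10}}) = \mathrm{Nor}_{\mathrm{Aut}(\widehat{E_{10}})}(K_{10})/K_{10}$, the fact that $6$ is the maximal orbit number for type $[3^6]$ forces $\mathrm{Nor}_{\mathrm{Aut}(\widehat{E_{10}})}(K_{10})$ to equal $G_{10}$ or a conjugate of $G_{10}$. Here the crucial structural fact, special to the snub type $[3^4,6^1]$, is that $E_{10}$ is chiral, so every origin-fixing symmetry of $E_{10}$ is a rotation and is therefore also a symmetry of $\widehat{E_{10}}$; consequently no origin-fixing symmetry outside $G_{10}$ can normalize $K_{10}$, giving $\mathrm{Nor}_{\mathrm{Aut}(E_{10})}(K_{10}) = G_{10}$ as well. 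Invoking the count $|F(E_{10})/G_{10}| = 30$ then shows that $X_{10}$ is $30$-semiregular.

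For the converse, assuming $X_{10}$ is $30$-semiregular, I would note that $G_{10}/K_{10} \le \mathrm{Aut}(X_{10})$, that these same symmetries lie in $\mathrm{Aut}(\widehat{E_{10}})$, and that they produce $6$ orbits on $F(\widehat{E_{10}})$. Since $\mathrm{Aut}(X_{10})$ merges no $G_{10}/K_{10}$-orbit of $F(X_{10})$ (otherwise $X_{10}$ would have fewer than $30$ orbits), the analogous non-merging must hold for $\widehat{X_{10}}$, and hence $\widehat{X_{10}}$ is exactly $6$-semiregular.

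The main obstacle I anticipate is the very first step: certifying the two counts $30$ and $6$ from the geometry of the snub hexagonal tiling, which requires tracking how each flag of the $[3^4,6^1]$ pattern sits inside the refined $[3^6]$ tessellation and checking that $\chi_{10}$ together with the translation lattice identifies flags in exactly the claimed numbers. The chirality of $E_{10}$ is doing double duty here: it is what eliminates the intermediate cases $m_{10} = 2,3$ that appear in Lemmas \ref{X-9}--\ref{X-7}, and it is what guarantees that the only origin-fixing symmetries available are rotations, which is precisely what makes the normalizer step in the forward direction go through without having to examine reflection subgroups of the point stabilizer.
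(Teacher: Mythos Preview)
Your proposal is correct and follows essentially the same route as the paper: set up $G_{10}=\langle\alpha_{10},\beta_{10},\chi_{10}\rangle$, use the orbit counts $|F(E_{10})/G_{10}|=30$ and $|F(\widehat{E_{10}})/G_{10}|=6$, and for each direction pass between $\mathrm{Nor}_{\mathrm{Aut}(\widehat{E_{10}})}(K_{10})$ and $\mathrm{Nor}_{\mathrm{Aut}(E_{10})}(K_{10})$ via the inclusion of origin-fixing symmetries of $E_{10}$ into those of $\widehat{E_{10}}$. Your explicit invocation of the chirality of $E_{10}$ is a welcome elaboration (it explains both why the normalizer comparison goes through and why the intermediate cases $m_{10}=2,3$ of the earlier lemmas do not arise here), but it is not a different method---the paper simply states the inclusion without naming its source.
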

\begin{proof}
Let $G_{10} = \langle \alpha_{10}, \beta_{10}, \chi_{10} \rangle$. Where $\alpha_{10} : z \mapsto z+A_{10}$, $\beta_{10} : z \mapsto z+B_{10}$ and $\chi_{10}$ be the $180$ degree rotation about origin, see Figure \ref{fig-E_10}. $\widehat{E_{10}}$ is of type $[3^6]$.
Let $\widehat{X_{10}}$ is $6$-semiregular. Then by similar reason as in Lemma \ref{X-9}  we must have ${\rm Nor_{Aut(\widehat{E_{10}})}}(K_{10}) = G_{10}$ or some conjugate of $G_{10}$. 
Now under the action of $G_{10}$, $F(E_{10})$ has $30$ orbits. Symmetries of $E_{10}$ which fixes origin are also symmetries of $\widehat{E_{10}}$. Hence $F(E_{10})$ has $30$ ${\rm Nor_{Aut(E_{10})}}(K_{10})$-orbits. Thus $X_{10}$ is $30$-semiregular. \\
Conversely, let $X_{10}$ is $30$-semiregular. Then $G_{10}/K_{10} \le $ Aut($X_{10}$). These symmetries are also present in ${\rm Aut}(\widehat{E_{10}})$ and the group $G_{10}$ gives $6$ orbits on $F(\widehat{E_{10}})$. By similar reason as above $\widehat{X_{10}}$ is $6$-semiregular.
\end{proof}

\begin{proof}[Proof of Theorem \ref{no-of-orbits} continued]
Here we will show that the bounds in Theorem \ref{no-of-orbits} are strict. 
Now we show that there existence of a $6$-semiregular toroidal map of type $[3^6]$.
Let $X$ be a equivelar map of type $[3^6]$. Then $X=E_1/K$ for some discrete fixed point free subgroup $K$ of Aut($E_1$). Aut($X$)$=$ Nor($K$)/$K$. Now $F(X)$ has $6$ $G_1/K$-orbits. If we can show that there exists some $K \le H_1$ such that Nor($K$)$=G_1$ then we are done.\\
Consider $K=\langle \alpha_1^5,\beta_1^3 \rangle$. $\alpha_1^5$ and $\beta_1^3$ are translations by the vectors $5A_1$ and $3B_1$ respectively.\\
${\rm Nor}(K)=\{\gamma \in {\rm Aut}(E_1) \mid \gamma\alpha_1^5\gamma^{-1},\gamma\beta_1^3\gamma^{-1}\in K \} = \{\gamma \in {\rm Aut}(E_1) \mid \gamma(5A_1), \gamma(3B_1) \in \ZZ5A_1+\ZZ3B_1 \}$. Clearly $G_1 \le {\rm Nor}(K)$. But $60$ and $120$ degree rotations and reflection about a line does not belongs to ${\rm Nor}(K)$. Hence ${\rm Nor}(K) = G_1$. The same process will work for equivelar maps of type $[4^4]$.
For other semi-equivelar maps we use above lemmas to conclude the bounds are sharp. The argument will go as following.
Let $X$ be a semi-equivelar map of type $[p_1^{r_1}, p_2^{r_2}, \dots, p_k^{r_k}]$ such that $\widehat{X}$ is of type $[3^6]$. Now by above discussion there exists a $6$ semiregular toridal map of type $[3^6]$. Now using above lemmas depending on type of $X$ it follows that the bounds of flag orbits are strict. Similarly we can do for maps whose corresponding equivelar map is of type $[4^4]$. This completes the proof of Theorem \ref{no-of-orbits}. 
\end{proof}

\begin{proof}[Proof of Theorem \ref{thm-main1} continued]
Let $X_9$ be a $m_9$-semiregular toroidal map of type $[3^1,4^1,6^1,4^1]$. Let $\widehat{X_9}$ be the associated equivelar map of type $[3^6]$. By Lemma \ref{X-9} we get $\widehat{X_3}$ has $n_9:=m_9/2$ many edge  orbits. Now by Theorem \ref{thm-main1} we have covering $\eta_{k_9} : \widehat{Y_{k_9}} \to \widehat{X_9}$ where $\widehat{Y_{k_9}}$ is $k_9$-semiregular for each $ (k_9,n_9) = (1,2),(2,6),(1,3),(3,6),(1,6)$. Now, if we consider the map of type $[3^1,4^1,6^1,4^1]$ corresponding to the equivelar map $\widehat{Y_{k_9}}$, say $Y_{k_9}$, then by Lemma \ref{X-9} it will be a $(4 \times k_9)$-edge orbital map. Clearly $Y_{k_9}$ is a cover of $X_9$. Hence for given $m_9$-semiregular map of type $[3^1,4^1,6^1,4^1]$ there exists a $k_9$ orbital cover of it for each $(k_9,m_9)=(4,8),(8,24),(4,12),(12,24),(4,24)$.\\
Proceeding in exactly similar way we can conclude the followings also. 
Given $m_7$-semiregular map of type $[3^1,6^1,3^1,6^1]$ there exists a $k_7$-semiregular cover of it for each $(k_7,m_7)=(4,8),(8,24),(4,12),(12,24),(4,24)$.\\
Given $m_8$-semiregular map of type $[3^1,12^2]$ there exists a $k_8$-semiregular cover of it for each $(k_8,m_8)=(3,6),(3,9),$ $(3,18),(6,18),(9,18)$.\\
Given $m_{11}$-semiregular map of type $[4^1,6^1,12^1]$ there exists a $k_{11}$-semiregular cover of it for each $(k_{11},m_{11})=(6,12),(6,18),(6,36),(12,36),(18,36)$.\\
Given $m_{6}$-semiregular map of type $[4^1,8^2]$ there exists a $k_{6}$-semiregular cover of it for each $(k_{6},m_{6})=(6,12),(3,6),(3,12)$.This completes the proof of parts (c),(d),(e),(f),(g) of Theorem \ref{thm-main1}.\\
Let $X_4$ be a $10$-semiregular map. We can take $X_4 = E_4/K_4$ for some $K_4 \le H_4 \le {\rm Aut}(E_4)$. Let $G_4$ be as in the proof of Theorem \ref{no-of-orbits}. Then $F(E_4)$ has $10$ flag orbits. Consider $G_4' = \langle G_4, R_1 \rangle$ where $R_1$ is the map obtained by taking reflection of $E_4$ about the line passing through $O$ and $A$ (see Figure \ref{fig-E_4}). Observe that $F(E_4)$ has $5$ $G_4'$-orbits. Now proceeding in similar way as in part (a) of this theorem we get existence of a $5$-semiregular cover of $X_4$. This proves part (h) of Theorem \ref{thm-main1}.\\
Let $X_5$ be a $20$-semiregular map of type $[3^2,4^1,3^1,4^1]$. We can take $X_5 = E_5/K_5$ for some $K_5 \le H_5 \le {\rm Aut}(E_5)$. Let $G_5$ be as in the proof of Theorem \ref{no-of-orbits}. Then $F(E_5)$ has $20$ flag orbits. Consider $G_5' = \langle \alpha_5,\beta_5,\chi_5, R_1 \rangle$ and $G_5''=\langle \alpha_5,\beta_5,\chi_5, R_1,R_2 \rangle$ where $R_1$ and $R_2$ is the map obtained by taking reflection of $E_5$ about the line passing through $O$,$A$ and $A_5,B_5$ respectively (see Figure \ref{fig-E_5}). Observe that $F(E_5)$ has $10$ $G_4'$-orbits and $5$ $G_4''$-orbits. Now proceeding in similar way as in part (a) of this theorem we get existence of a $10$ and $5$-semiregular cover of $X_5$. This proves part (i) of Theorem \ref{thm-main1}.\\
Let $X_{10}$ be a $30$-semiregular map. We can take $X_{10} = E_{10}/K_{10}$ for some $K_{10} \le H_{10} \le {\rm Aut}(E_{10})$. Let $G_{10}$ be as in the proof of Theorem \ref{no-of-orbits}. Then $F(E_{10})$ has $30$ flag orbits. Consider $G_{10}' = \langle \alpha_{10},\beta_{10},\chi_{10},\rho_{10} \rangle$ where $R_1$ is the map obtained by taking rotation of $E_{10}$ about origin (see Figure \ref{fig-E_10}). Observe that $F(E_{10})$ has $10$ $G_{10}'$-orbits. Now proceeding in similar way as in part (a) of this theorem we get existence of a $10$-semiregular cover of $X_{10}$. This proves part (j) of Theorem \ref{thm-main1}.
\end{proof}

Now we are moving to see number of sheets of the covers obtained above. For that we make,

\begin{claim}\label{sheet} 
The cover $Y$ in Theorem \ref{thm-main1} is a $m^2$ sheeted covering of $X$.
\end{claim}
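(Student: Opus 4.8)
The plan is to read off the number of sheets as a lattice index. In every part of Theorem \ref{thm-main1} the cover has the same shape: one starts from $X = E_i/K_i$ with $K_i = \langle \gamma, \delta \rangle$, forms the finite-index subgroup $L_i = \langle \gamma^m, \delta^m \rangle$ (with $m = |ad-bc|$ as produced in Claim \ref{clm1}), and defines $Y := E_i/L_i$ together with the covering $\eta \colon v + L_i \mapsto v + K_i$. Since $E_i$ is the plane, it is the universal (simply connected) cover of both $X$ and $Y$, and $K_i$, $L_i$ act freely and properly discontinuously on it. Hence by the standard dictionary of covering space theory the number of sheets of $\eta$ equals the subgroup index $[K_i : L_i]$.

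The next step is to evaluate this index. As $K_i$ is generated by the two translations $\gamma$, $\delta$ along linearly independent vectors $C$, $D$, it is the rank-$2$ lattice $K_i = \ZZ C + \ZZ D$, while $L_i = \ZZ(mC) + \ZZ(mD) = m K_i$. The index of a rank-$2$ lattice in a scalar multiple of itself is the absolute value of the determinant of the diagonal change-of-basis matrix, so
$$[K_i : L_i] = \left| \det \begin{pmatrix} m & 0 \\ 0 & m \end{pmatrix} \right| = m^2.$$
Therefore $\eta$ is an $m^2$-sheeted covering, which is precisely the claim.

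For the parts (c)--(j) the cover $Y$ is built through the associated equivelar map $\widehat{X}$: one first covers $\widehat{X} = \widehat{E}/K$ by $\widehat{Y} = \widehat{E}/L$ and then passes to the corresponding semi-equivelar map. Because $X$, $Y$ and $\widehat{X}$, $\widehat{Y}$ share the very same pair of lattices $K$, $L$, the sheet count $[K : L] = m^2$ is identical in both settings, so no separate computation is needed and the uniform answer $m^2$ carries over to every type.

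The one point deserving care is the passage from \emph{subgroup index} to \emph{number of sheets}: this requires that $K_i$ act on $E_i$ without fixed points (guaranteed by the standing hypothesis that $K_i$ is fixed-point free) and that $L_i$ have finite index in $K_i$ (guaranteed since $L_i = m K_i$ with $m \neq 0$, as $\mathrm{rank}(L_i) = 2$). Concretely, the free action identifies the fiber $\eta^{-1}(K_i x)$ with the set of right cosets $L_i \backslash K_i$, whose cardinality is $[K_i : L_i]$. Granting this identification, the determinant evaluation above is immediate and the claim follows.
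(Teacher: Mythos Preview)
Your argument is correct and follows essentially the same route as the paper: both reduce the sheet count to the subgroup index $[K_i : L_i]$ via covering-space theory (the paper quotes two results from Hatcher to that effect) and then observe that $L_i = \langle \gamma^m, \delta^m \rangle$ has index $m^2$ in $K_i = \langle \gamma, \delta \rangle$. Your version is actually more explicit than the paper's, both in the determinant computation of the index and in noting that parts (c)--(j) share the same lattice pair $(K,L)$ with the associated equivelar construction, so the same $m^2$ applies uniformly.
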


To do this we need following two results from the theory of covering spaces.

\begin{result}\label{result1} (\cite{AH2002})
Let $p:(\widetilde{X},\widetilde{x_0})\to (X,x_0)$ be a path-connected covering space of the path-connected, locally path-connected space $X$, and let $H$ be the subgroup $p_*(\pi_1(\widetilde{X},\widetilde{x_0})) \subset \pi_1(X,x_0).$ Then,
\begin{enumerate}
    \item[1.] This covering space is normal if and only if $H$ is a normal subgroup of $\pi_1(X,x_0)$
    \item[2.] $G(\widetilde{X})$(the group of deck transformation of the covering $\widetilde{X}\to X$) is isomorphic to $N(H)/H$ where $N(H)$ is the normalizer of $H$ in $\pi_1(X,x_0)$.
\end{enumerate}
In particular, $G(\widetilde{X})$ is isomorphic to $\pi_1(X,x_0)/H$ if $\widetilde{X}$ is a normal covering. Hence for universal cover $\widetilde{X}\to X$ we have $G(\widetilde{X}) \simeq \pi_1(X)$.
\end{result} 

\begin{result}\label{result2} (\cite{AH2002})
The number of sheets of a covering space $p:(\widetilde{X},\widetilde{x_0})\to (X,x_0)$ with $X$ and $\widetilde{X}$ path-connected equals the index of $p_*(\pi_1(\widetilde{X},\widetilde{x_0}))$ in $\pi_1(X,x_0)$.
\end{result}

In our situation applying Result \ref{result1} for the covering $E_i\to E_i/K_i$ we get $\pi_1(E_i/K_i) = K_i$. For the covering $E_i \to E_i/\mathcal{L}_i$ we get $\pi_1(E_i/\mathcal{L}_i) = \mathcal{L}_i$. Thus applying Result \ref{result2} we get number of sheets of $Y$ over $X$ is $=n:=[K_i:\mathcal{L}_i] = m^2$ for all $i = 3,4,5,6, 7$.  This proves our Claim \ref{sheet}.

\begin{proof}[Proof of Theorem \ref{thm-main2}]
Let $X$ be an semiregular map of type $(m, \ell, u,v)$. Then form Prop. \ref{propo-1} we get $X = M_i/K$ for some discrete subgroup $K$ of Aut($M_i$). Now $Y$ covers $X$ if and only if  $Y = M_i/L$ for some subgroup $L$ of $K$ generated by $2$ translations corresponding to $2$ independent vectors. Let $K = \langle \gamma , \delta \rangle$. Now consider $L_n = \langle \gamma^n, \delta \rangle$ and $Y_n = M_i/L_n$. then $Y_n$ covers $X$.
Number of sheets of the cover $Y_n \longrightarrow X$ is equal to $[K : L_n] = n.$ Hence $Y_n$ is our required $n$ sheeted cover of $X$.
\end{proof}
\begin{proof}[Proof of Theorem \ref{thm-main3}]
Here two maps are isomorphic if they are isomorphic as maps. Two maps are equal if the orbits of $\mathbb{R}^2$ under the action of corresponding groups are equal as sets.
Suppose $X$ and $K$ be as in the proof of Theorem \ref{thm-main2}. Let $n \in \mathbb{N}$. 
Let $Y =  E/L$ be $n$ sheeted cover of $X$. 
Let $L = \langle \omega_1, \omega_2\rangle$. Where $\omega_1, \omega_2 \in K = \langle \gamma, \delta \rangle$. Suppose $\omega_1 = \gamma^a \circ \delta^b$ and $\omega_2 = \gamma^c \circ \delta^d$ where $a,b,c,d \in \ZZ$. Define $M_Y = \begin{bmatrix} a & c\\b &d \end{bmatrix}$.
We represent $Y$ by the associated matrix $M_Y$. This matrix representation corresponding to a map is unique as $\gamma$ and $\delta$ are translations along two linearly independent vectors. Denote area of the torus $Y$ by $\Delta_Y$. As $Y$ is $n$ sheeted covering of $X$ so $\Delta_Y = n\Delta_X \implies$ area of the parallelogram spanned by $w_1$ and $w_2 = n \times$ area of the parallelogram spanned by $\gamma$ and $\delta$. That means $|det(M_Y)| = n$. Therefore for each $n$ sheeted covering, the associated matrix belongs to 
$$ \mathcal{S}:=\{ M \in GL(2,\ZZ): |det(M)| = n \}.$$ 
Conversely for every element of $\mathcal{S}$ we get a $n$ sheeted covering $Y$ of $X$ by associating $\begin{bmatrix} a & c\\b &d \end{bmatrix}$ to $E/\langle a\gamma + b\delta, c\gamma +d\delta\rangle$. So there is an one to one correspondence to $n$-sheeted covers of $X$ and $\mathcal{S}$. To proceed further we need following two lemmas.
\begin{lemma}\label{equl}
Let $Y_1$ and $Y_2$ be maps and $M_1$ and $M_2$ be associated matrix of them respectively. Then $Y_1 = Y_2$ if and only if  there exists an unimodular matrix $($an integer matrix with determinant $1$ or $-1)$ $U$ such that $M_1U = M_2.$  
\end{lemma}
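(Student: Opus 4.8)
The plan is to translate the statement about equality of maps into a statement about equality of sublattices of $\ZZ^2$, and then to invoke the standard fact that two integer bases of one and the same rank\mbox{-}$2$ lattice differ by a unimodular change of basis. Throughout I would fix $K=\langle\gamma,\delta\rangle$ as in the proof of Theorem \ref{thm-main3}, so that $X=E/K$, and write $Y_1=E/L_1$, $Y_2=E/L_2$ for the two covers under consideration, with $L_1,L_2\le K$. Since $\gamma,\delta$ are translations along linearly independent vectors, the assignment $\gamma^{x}\circ\delta^{y}\mapsto(x,y)^{\mathsf T}$ is a group isomorphism $K\xrightarrow{\ \sim\ }\ZZ^2$, under which $L_i$ corresponds to the sublattice of $\ZZ^2$ spanned by the columns of $M_i$. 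Note that $\det M_i\neq 0$, because the two generators of $L_i$ are independent; this is what makes the columns of $M_i$ an actual basis of $L_i$ rather than a mere generating set.

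The first thing I would record is the meaning of $Y_1=Y_2$. By the convention fixed just before this lemma, two maps are equal exactly when the $\mathbb{R}^2$\mbox{-}orbits under their groups coincide as sets; since $L_1,L_2$ act by translations, the orbit of the origin is $L_i$ itself, so the orbit partitions agree if and only if $L_1=L_2$ as subgroups of $K$. Hence, under the identification above, $Y_1=Y_2$ is equivalent to the equality of the two column lattices of $M_1$ and $M_2$ inside $\ZZ^2$.

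For the backward direction I would assume $M_2=M_1U$ with $U$ unimodular. Then each column of $M_2$ is an integer combination of the columns of $M_1$, so $L_2\subseteq L_1$; as $U^{-1}$ is again an integer matrix, the relation $M_1=M_2U^{-1}$ gives $L_1\subseteq L_2$, whence $L_1=L_2$ and $Y_1=Y_2$. For the forward direction I would assume $Y_1=Y_2$, i.e.\ $L_1=L_2=:L$. The columns of $M_1$ and the columns of $M_2$ are then each a generating set of $L$, so every column of $M_2$ is an integer combination of the columns of $M_1$, yielding an integer matrix $U$ with $M_2=M_1U$, and symmetrically an integer matrix $V$ with $M_1=M_2V$. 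Combining these gives $M_1=M_1UV$, and since $\det M_1\neq 0$ I may cancel $M_1$ to obtain $UV=I$; taking determinants, $\det U\cdot\det V=1$ with $\det U,\det V\in\ZZ$ forces $\det U=\pm 1$, so $U$ is unimodular, as required.

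The argument is essentially routine, so I do not expect a serious obstacle; the only point demanding care is the translation step itself, namely verifying that the geometric assertion ``the two quotient maps coincide'' really collapses to ``the two lattices are equal,'' together with the observation that $\det M_i\neq 0$ so that the columns of each $M_i$ genuinely form a $\ZZ$\mbox{-}basis of $L_i$. It is precisely this independence that licenses the cancellation $M_1=M_1UV\Rightarrow UV=I$ and hence the clean conclusion $\det U=\pm 1$.
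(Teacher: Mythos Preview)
Your proof is correct and follows essentially the same approach as the paper: both reduce $Y_1=Y_2$ to the equality $L_1=L_2$ of sublattices and then use that two $\ZZ$-bases of the same rank-$2$ lattice are related by a unimodular matrix. The only minor difference is that the paper cites Hardy--Wright for this last fact, while you supply the direct $M_1=M_1UV\Rightarrow UV=I$ argument; your treatment of the translation step $Y_1=Y_2\Leftrightarrow L_1=L_2$ is in fact cleaner than the paper's.
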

\begin{proof}
Let $Y_1 = Y_2$. Let $i:Y_1\to Y_2$ be an isomorphism. We can extend  $i$ to $\widetilde{i} \in Aut(E)$. Then $\widetilde{i}$ will take fundamental parallelogram of $Y_1$ to that of $Y_2$. Hence the latices formed by $L_1$ and $L_2$ are same say $\Lambda$. $\widetilde{i}$ transforms $\Lambda$ to itself. Therefore from \cite{HW1979}(Theorem 32, Chapter 3) we get matrix of the transformation is unimodular. Our lemma follows from this.
\\
Conversely suppose $M_1U = M_2$ where $U$ is an unimodular matrix. Let $M_1 = (w_1~ w_2), M_2 = (w_1'~w_2') $ and $U = \begin{bmatrix} a & b\\c &d \end{bmatrix}$ where $w_i, w_i'$ are column vectors for $i=1,2$.
Therefore 
$$ M_2 = M_1U \implies (w_1'~ w_2') = (w_1~ w_2)\begin{bmatrix} a & b\\c & d \end{bmatrix} = (aw_1+cw_2~~bw_1+dw_2).$$
Now suppose $L_1 = \langle \alpha_1, \beta_1 \rangle$ and $L_2 = \langle \alpha_2, \beta_2 \rangle$ and $A_i, B_i$ be the vectors by which $\alpha_i$ and $\beta_i$ translating the plane for $i=1,2$ and let $C$ and $D$ be the vectors corresponding to $\gamma$ and $\delta$.
Let $$A_1 = p_1C + q_1D, B_1 = s_1C + t_1D, A_2 = p_2C + q_2D, B_2 = s_2C + t_2D.$$
Now $w_1' = \begin{pmatrix} p_2 \\ q_2\end{pmatrix} = a\begin{pmatrix} p_1 \\ q_1\end{pmatrix} + c\begin{pmatrix} s_1 \\ t_1\end{pmatrix} = \begin{pmatrix} ap_1+cs_1 \\ aq_1+ct_1\end{pmatrix}$.
Therefore 
\begin{equation*}
    \begin{split}
        A_2 &= (ap_1 + cs_1)C + (aq_1+ct_1)D \\&= a(p_1C + q_1D) + c(s_1C + t_1D) \\&= aA_1 + cB_1
    \end{split}
\end{equation*}
Hence $\alpha_2 \in L_1$. Similarly $\beta_2 \in L_1$. Therefore $L_2 \le L_1$. Proceeding in the similar way and using the fact that $det(U) = \pm 1$ we get $L_1 \le L_2$.
Therefore $L_1 = L_2$. Thus $Y_1 = E/L_1 = E/L_2 = Y_2$. This completes the proof of Lemma \ref{equl}.
\end{proof}
\begin{lemma}\label{isomm}
Let $Y_1$ and $Y_2$ be two toroidal maps with associated matrix $M_1$ and $M_2$ respectively. Then $Y_1 \simeq Y_2$ if and only if  there exists $A \in G_0$ and $B\in GL(2,\ZZ)$ such that $M_1 = AM_2B$ where $G_0$ is group of rotations and reflections fixing the origin in $E$.
\end{lemma}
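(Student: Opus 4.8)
The plan is to classify the maps $Y_j = E/L_j$ by their lattices $\Lambda_j$ of translation vectors, and to show that a map isomorphism amounts to exactly two freedoms: changing the generators of a single lattice (recorded by right multiplication by a unimodular matrix) and moving one lattice onto the other by an ambient symmetry of $E$ fixing the origin (recorded by left multiplication by an element of $G_0$). The two ingredients I would set up first are the semidirect decomposition $\mathrm{Aut}(E) = T \rtimes G_0$ already used in the paper, where $T$ is the translation group and $G_0$ the stabiliser of the origin, and the rigidity fact (Proposition \ref{propo-1}, Proposition \ref{theo:plane}) that $E$ is the unique Archimedean tiling of its type, so that every map isomorphism between quotients of $E$ lifts to a genuine element of $\mathrm{Aut}(E)$ rather than a mere homeomorphism.

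For the forward direction, suppose $i\colon Y_1 \to Y_2$ is a map isomorphism. First I would lift $i$ to a homeomorphism $\widetilde i$ of the universal cover $E$; since $E$ is the unique tiling of its type, $\widetilde i$ preserves the cell structure, so $\widetilde i \in \mathrm{Aut}(E)$. Using $\mathrm{Aut}(E) = T \rtimes G_0$ write $\widetilde i = \tau \circ A$ with $\tau \in T$ and $A \in G_0$. The condition that $\widetilde i$ descend to the quotients is that it conjugate $L_1$ onto $L_2$; because $L_1, L_2$ consist of translations and translations commute, the translation part $\tau$ drops out of the conjugation, leaving the purely linear statement $A\,\Lambda_1 = \Lambda_2$ of lattices in the plane. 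Passing to the chosen basis, the generating matrix of $A\,\Lambda_1$ is $A M_1$ and that of $\Lambda_2$ is $M_2$, and two generating matrices describe the same lattice exactly when they differ by right multiplication by a unimodular matrix (the lattice content already isolated in Lemma \ref{equl}); hence $A M_1 = M_2 U$ for some $U \in GL(2,\ZZ)$. Rearranging gives $M_1 = A^{-1} M_2 U$, and since $A^{-1} \in G_0$ and $U \in GL(2,\ZZ)$ this is the required factorisation.

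For the converse I would read $M_1 = A M_2 B$ from right to left. By Lemma \ref{equl} the matrices $M_2$ and $M_2 B$ (with $B \in GL(2,\ZZ)$) describe the same map $Y_2$, so it suffices to produce an isomorphism from $E/\Lambda(M_2 B)$ to $Y_1$. The identity $M_1 = A\,(M_2 B)$ says precisely that $A$ carries the lattice $\Lambda(M_2 B)$ onto $\Lambda_1$; since $A \in G_0 \le \mathrm{Aut}(E)$ is a cell-preserving symmetry fixing the origin, it induces a map isomorphism $E/\Lambda(M_2 B) \to E/(A\cdot\Lambda(M_2 B)) = E/\Lambda_1 = Y_1$, whence $Y_2 \simeq Y_1$.

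The main obstacle is the bookkeeping that makes the two directions line up. Conceptually the heart is the lifting step: I must argue that an \emph{abstract} map isomorphism lifts to a Euclidean symmetry of $E$, which is exactly where the uniqueness of the Archimedean tiling (Proposition \ref{theo:plane}) is indispensable. The technical nuisance is then identifying the geometric action of $G_0$ on lattices with the left-multiplication by the matrices denoted $A$; this requires that $G_0$ normalise the reference lattice in which the $M_j$ are expressed, so that left multiplication is well defined and genuinely matches $A\,\Lambda_1$. Once that identification is made and the lattice-equality criterion of Lemma \ref{equl} is in hand, the remaining manipulations are routine linear algebra.
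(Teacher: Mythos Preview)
Your proposal is correct and follows essentially the same strategy as the paper: lift the map isomorphism to an automorphism of the planar tiling, then factor that automorphism into a point-stabiliser part and a lattice base-change. Your write-up is in fact more careful than the paper's own proof---you make explicit the semidirect decomposition $\mathrm{Aut}(E)=T\rtimes G_0$, explain why the translational part $\tau$ drops out of the conjugation $\widetilde{i}\,L_1\,\widetilde{i}^{-1}=L_2$, and invoke Lemma~\ref{equl} and Proposition~\ref{theo:plane} at the precise points where they are needed---whereas the paper simply asserts that ``the only ways to transform one fundamental region to another are rotation, reflection and change of basis.''
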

\begin{proof}
Let $Y_1 \simeq Y_2$ and $\alpha : Y_1 \to Y_2$ be an isomorphism. Now $\alpha$ can be extended to an automorphism of the covering plane $E$, call that extension be $\widetilde{\alpha}$. 
Clearly $\widetilde{\alpha}$ will take fundamental parallelogram of $Y_1$ to that of $Y_2$. Now the only ways to transform one fundamental region to another are rotation, reflection and change of basis of $E$. Multiplication by an element of $GL(2,\ZZ) $ will take care of base change. Rotation, reflection or their composition will take care by multiplication by $A\in G_0$. Hence we get $M_1 = AM_2B$.\\
Conversely let $M_1 = AM_2B$. $A \in G_0$ so the combinatorial type of the torus associated to the matrix $AM_2$ and $M_2$ are same. Geometrically multiplying by elements of $GL(2,\ZZ)$ corresponds to modifying the fundamental domain by changing the basis. Hence this will not change the combinatorial type of the torus. Thus $Y_1 \simeq Y_2$. This completes the proof of Lemma \ref{isomm}.
\end{proof}
Now define a relation on $\mathcal{S}$ by $P\sim Q \iff P= QU$ for some unimodular matrix $U$. Clearly this is an equivalence relation. Consider $\mathcal{S}' = \mathcal{S}/\sim $. So by Lemma \ref{equl} we can conclude that there are $\#\mathcal{S}'$ many distinct $n$ sheeted cover of $X$ exists. Let's find this cardinality.
Now for every $m\times n$ matrix $P$ with integer entries has an unique $m \times n$ matrix $H$, called hermite normal form of $P$, such that $H = PU$ for some unimodular matrix $U$. All elements of an equivalence class of $\mathcal{S}'$ has  same hermite normal form and we take this matrix in hermite normal form as representative of that equivalence class. Thus to find cardinality of $\mathcal{S}'$ it is enough to find number of distinct matrices $M$ which are in hermite normal form and has determinant $n$. We do not take the matrices with determinant $-n$ because by multiplying by the unimodular matrix  
$\begin{bmatrix} 0 & 1\\1 &0 \end{bmatrix}$ changes sign of the determinant.
As $M$ is in lower triangular form so take $M = \begin{bmatrix} a & 0\\b &d \end{bmatrix}$.
Then $det(M) = ad = n \implies a= n/d.$ By definition of hermite normal form $b\geq0$ and $b<d$ so $b$ has $d$ choices for each $d|n.$ Hence there are precisely $\sigma(n):=\sum_{d|n}d$ many distinct $M$ possible. Thus $\#\mathcal{S}' = \sigma(n).$
Let 
$\mathcal{S}_1 = \{ M| M $ is a representative of an equivalence class of $ \mathcal{S}'$ which is in hermite normal form$\}$
Clearly $\#\mathcal{S}_1 = \sigma(n).$ Now define a relation on $\mathcal{S}_1 $ by $M_1\sim M_2  \iff \exists A \in G_0 ~such ~that~ M_1 = AM_2$. Clearly this is an equivalence relation. 
Consider $\mathcal{S}_2 := \mathcal{S}_1/\sim$.
By Lemma \ref{isomm} it follows that there are $\#\mathcal{S}_2$ many $n$ sheeted covers upto isomorphism. Because here all matrices $M_i$ has same determinant so $M_1 = AM_2B \implies det(A)det(B) = 1$. As det($A$) and det($B$) both are integer so they belongs to $\{1,-1\}$ i.e. they are unimodular matrices.
Now we have to find $\#\mathcal{S}_2$.  Observe that the matrix representation of elements of $G_0$ with respect to the basis $\{\alpha(0), \beta(0)\}$ have integer entries because lattice points must go to lattice points by a symmetry of the plane where $\alpha : z \mapsto z+A_i$ and $\beta:z\mapsto z+B_i$ are two translations of $E_i$. Suppose $M_1$ and $M_2 \in \mathcal{S}_2$ such that $M_1 \sim M_2$. So there exists $A \in G_0$ such that $M_1 = AM_2$. Let 
$M_1 = \begin{bmatrix} \frac{n}{d_1} & 0\\c_1 &d_1 \end{bmatrix}$ , 
$M_2 = \begin{bmatrix} \frac{n}{d_2} & 0\\c_2 &d_2 \end{bmatrix}$ and 
$A = \begin{bmatrix} p & q\\r &s \end{bmatrix}$. 
Then 
\begin{equation}\label{mat}
    \begin{split}
        M_1 = AM_2 &\implies  \begin{bmatrix} \frac{n}{d_1} & 0\\c_1 &d_1 \end{bmatrix} = \begin{bmatrix} p & q\\r &s \end{bmatrix} \begin{bmatrix} \frac{n}{d_2} & 0\\c_2 &d_2 \end{bmatrix} = \begin{bmatrix} \frac{np}{d_2}+qc_2 & qd_2\\\frac{rn}{d_2}+sc_2 &sd_2 \end{bmatrix} \\
        &\implies qd_2 = 0 \\&\implies q=0 ~since ~d_2 \neq 0.
    \end{split}
\end{equation}
Therefore $A = \begin{bmatrix} p & 0\\r &s \end{bmatrix}$. $det(A) = 1 \implies ps = 1 \implies s= 1/p.$
Again from equation \ref{mat} we get $$np/d_2 = n/d_1 \implies p = d_2/d_1$$ and $$rn/d_2 + c_2/p = c_1 \implies r = (d_2c_1-d_1c_2)/n$$
Therefore $$A = \begin{bmatrix} d_2/d_1 & 0\\(d_2c_1-d_1c_2)/n &d_1/d_2 \end{bmatrix}.$$ As $A$ has integer entries and $d_1, d_2$ are positive so $d_1 = d_2 = d$(say) and $n|d(c_1-c_2)$.
Hence $$A = \begin{bmatrix} 1 & 0\\ d(c_1-c_2)/n & 1 \end{bmatrix}.$$
Now $A \in G_0$ and $G_0 = D_6$ for maps of type $[3^6],[6^3], [3^1,6^1,3^1,6^1],$, $[3^1,4^1,6^1,4^1], [3^1,12^2],$ $[4^1,6^1,12^1]$; $G_0 = D_4$ for maps of type $[4^4],[3^2,4^1,3^1,4^1],[4^1,8^2]$; $G_0=\ZZ_6$ for $[3^4,6^1]$; $G_0=\ZZ_2^2$ for maps of type $[3^3,4^2]$.
Here $D_6$ is generated by $\begin{bmatrix} 0 & -1\\ 1 & 1 \end{bmatrix}$ and $\begin{bmatrix} -1 & -1\\ 0 & 1 \end{bmatrix}$. $D_4$ is generated by $\begin{bmatrix} 0 & 1\\ -1 & 0 \end{bmatrix}$ and $\begin{bmatrix} -1 & 0\\ 0 & 1 \end{bmatrix}$. $\ZZ_6$ is generated by $\begin{bmatrix} 0 & -1\\ 1 & 1 \end{bmatrix}$.
One can check that only matrices in $G_0$ having diagonal entries $1$ is identity matrix. Hence $A = I_2$. Therefore $c_1 = c_2 \implies M_1 = M_2.$ Each equivalence class of $\mathcal{S}_2$ is singleton. Therefore $\#\mathcal{S}_2 = \#\mathcal{S}_1 = \sigma(n)$. This proves  Theorem \ref{thm-main3}.
\end{proof}

\begin{proof}[Proof of Theorem \ref{thm-main4}]
Let $X$ be a $m$-orbital map of vertex type $(m,  \ell; u,v)$. Let $Y_1$ be a $k$-orbital cover of $X$. Consider number of sheets of the cover $Y_1\longrightarrow X$ be $n_1$. Let the set $C_1$ containing all $n$ sheeted covering of $X$ for $n\le n_1-1$. Now check that does there exists a $k$-orbital cover or not in $C_1$. If there does not exists one, then $Y_1$ be a minimal $k$-orbital cover otherwise take $Y_2$ be a $k$-orbital cover in $C_1$. Let number of sheets for the covering $Y_2\longrightarrow X$ be $n_2$. Then consider $C_2$ be the collection of all $s$ sheeted cover of $X$ for $s\le n_2$. Again check if there exists a $k$-orbital cover in $C_2$. If not then $Y_2$ minimal $k$-orbital cover of $X$. Otherwise proceed similarly to more lower sheeted covering. Since there are only finitely many covers of each sheeted so the process will terminate. This proves  Theorem \ref{thm-main4}
\end{proof}
Now to answer of the last part of Question \ref{ques} we prove the following,
\begin{claim}\label{lem1}
Let $X$ be a $m$-orbital map. Then there exists a group $\widetilde{G} \le$ Aut($E$) such that $E(E)$ has $m$ $\widetilde{G}$-orbits.
\end{claim}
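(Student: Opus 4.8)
The plan is to take $\widetilde{G}$ to be the normalizer of $K$ in $\mathrm{Aut}(E)$ and to transport the orbit count on $X$ upward through the covering $p\colon E\to X$. By Proposition \ref{propo-1} I would first write $X = E/K$ for a discrete subgroup $K\le \mathrm{Aut}(E)$, which (as $X$ is an orientable torus) consists solely of translations. Recall, as is used throughout the paper and established in \cite{drach:2019}, that $\mathrm{Aut}(X) = \mathrm{Nor}_{\mathrm{Aut}(E)}(K)/K$. I set $\widetilde{G} := \mathrm{Nor}_{\mathrm{Aut}(E)}(K)$, so that $K \trianglelefteq \widetilde{G}$ and $\widetilde{G}/K = \mathrm{Aut}(X)$.

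The key observation, coming from the fact that $p$ is a covering map, is that the induced surjection $E(E)\to E(X)$ on flags has as its fibers exactly the $K$-orbits of $E(E)$; hence there is a canonical bijection between the set $K\backslash E(E)$ of $K$-orbits and the flag set $E(X)$. Since $K \trianglelefteq \widetilde{G}$, the action of $\widetilde{G}$ on $E(E)$ permutes the $K$-orbits, and so induces an action of $\widetilde{G}/K$ on $K\backslash E(E)$. Under the identification $K\backslash E(E)\cong E(X)$ this induced action coincides with the usual action of $\mathrm{Aut}(X)=\widetilde{G}/K$ on $E(X)$: this is precisely the descent/lifting property of automorphisms through $p$, exactly the commuting square already verified in Claim \ref{diag} for the intermediate covers.

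The final step is the elementary orbit-counting fact that for a group $\widetilde{G}$ acting on a set $S$ with a normal subgroup $K$, the $\widetilde{G}$-orbits on $S$ correspond bijectively to the $(\widetilde{G}/K)$-orbits on $K\backslash S$. Applying this with $S=E(E)$ gives that the number of $\widetilde{G}$-orbits on $E(E)$ equals the number of $\mathrm{Aut}(X)$-orbits on $E(X)$, which is $m$ because $X$ is $m$-orbital. Thus $\widetilde{G}=\mathrm{Nor}_{\mathrm{Aut}(E)}(K)$ is the required group.

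The argument is essentially formal once the identity $\mathrm{Aut}(X)=\mathrm{Nor}(K)/K$ and the covering-fiber description of $K$-orbits are in hand; there is no serious obstacle, only a point demanding care. That point is the $\widetilde{G}/K$-equivariance of the bijection $K\backslash E(E)\cong E(X)$, i.e. the assertion that the $\widetilde{G}$-action upstairs descends to \emph{precisely} the $\mathrm{Aut}(X)$-action downstairs rather than to some coarser or finer action; I would justify this directly from the commutativity of the lifting diagram in Claim \ref{diag}, which shows every $\alpha\in\mathrm{Aut}(X)$ arises from an element of $\widetilde{G}$ and conversely.
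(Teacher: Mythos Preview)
Your proof is correct and follows essentially the same approach as the paper: both set $\widetilde{G}=\mathrm{Nor}_{\mathrm{Aut}(E)}(K)$ and use $\mathrm{Aut}(X)=\mathrm{Nor}(K)/K$ to identify the $\widetilde{G}$-orbits on $E(E)$ with the $\mathrm{Aut}(X)$-orbits on $E(X)$. Your version is in fact more careful than the paper's, which simply asserts that the preimages $\eta^{-1}(O_i)$ are the $\widetilde{G}$-orbits without spelling out the orbit-counting bijection you make explicit.
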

\begin{proof}
Let $X$ be a semi-equivelar toroidal map of type $[p_1^{r_1},p_2^{r_2},\dots p_k^{r_k}]$. By proposition \ref{propo-1} we get $X = E_j/K$ for some discrete subgroup $K$ of Aut($E_j$) where $E_j$ is semi-equivelar tilling of $\mathbb{R}^2$. 
Let $O_1,O_2, \dots O_m$ be $G$-orbits of $E(X)$. Let $\eta:E_j\to X$ be the covering map. Then $\{\eta^{-1}(O_i) | i = 1,2, \dots m\}$ be a partition of $E(E_j)$. 
Aut($X$)$=$Nor($K$)/$K$. 
Now consider $\widetilde{G} = {\rm Nor}(K).$ Then
$E(E_j)$ forms $m$ $\widetilde{G}$-orbits. This proves  Claim \ref{lem1}.
\end{proof}
\begin{lemma}\label{orbb}
Let $X$ be a $m$-orbital semiregular toroidal map and $Y$ be a $k$-orbital cover of $X$. Then $k\le m$.
\end{lemma}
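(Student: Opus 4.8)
The plan is to translate the two flag‑orbit counts into orbit counts of symmetry groups acting on the common planar tiling, and then to use the elementary fact that a larger group produces fewer orbits. Since $Y$ is a cover of $X$, Proposition \ref{propo-1} lets me write $X = E/K$ and $Y = E/L$ for a single Archimedean tiling $E$ and discrete translation subgroups $L \le K$ of ${\rm Aut}(E)$, the inclusion encoding that the covering $\eta\colon Y \to X$ is the quotient by $K/L$. Put $N_K = {\rm Nor}_{{\rm Aut}(E)}(K)$ and $N_L = {\rm Nor}_{{\rm Aut}(E)}(L)$. Using ${\rm Aut}(E/K) = N_K/K$ together with the correspondence between ${\rm Aut}(X)$-orbits on $F(X)$ and $N_K$-orbits on $F(E)$, Claim \ref{lem1} identifies $m$ with the number of $N_K$-orbits on $F(E)$ and, applied to $Y$, identifies $k$ with the number of $N_L$-orbits on $F(E)$.

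With this translation, $k \le m$ is equivalent to the assertion that the partition of $F(E)$ into $N_L$-orbits is coarser than the partition into $N_K$-orbits, and for that it suffices to prove the containment $N_K \subseteq N_L$: if $N_K \subseteq N_L$, then each $N_L$-orbit is a union of $N_K$-orbits, so the number of $N_L$-orbits is at most the number of $N_K$-orbits. To establish the containment I would use the splitting ${\rm Aut}(E) = T \rtimes S$ into the translation group $T$ and the point stabilizer $S$. Every translation normalizes both $K$ and $L$ (translations commute), so the whole question descends to the point group: I must show that each $s \in S$ preserving the lattice $\Lambda_K$ of $K$ also preserves the lattice $\Lambda_L$ of $L$. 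This is exactly the place where the structure of the cover enters, since for the symmetry‑compatible covers constructed earlier one has $L = \langle \gamma^{m}, \delta^{m}\rangle$ with $K = \langle \gamma, \delta\rangle$, hence $\Lambda_L = m\Lambda_K$ is a homothet of $\Lambda_K$; then any linear $s$ fixing $\Lambda_K$ fixes $m\Lambda_K = \Lambda_L$ as well, giving $S_K \subseteq S_L$ and therefore $N_K = T\rtimes S_K \subseteq T \rtimes S_L = N_L$.

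I expect the containment $N_K \subseteq N_L$ to be the only delicate step, and it is the natural home for the hypotheses; the descent to the point group and the final orbit count are routine. Equivalently, one may phrase this step in covering‑space language: an element of ${\rm Aut}(X) = N_K/K$ is represented by some $g \in N_K$, and its lift acts on $Y = E/L$ precisely when $g$ normalizes $L$, so $N_K \subseteq N_L$ says that \emph{every} automorphism of $X$ lifts to an automorphism of $Y$. Granting this, ${\rm Aut}(X)$ embeds into ${\rm Aut}(Y)$ carrying each ${\rm Aut}(X)$-orbit onto a single ${\rm Aut}(Y)$-orbit, which again forces $k \le m$. To finish, I would fix the $N_K$-orbits $O_1,\dots,O_m$ on $F(E)$, note that $N_L \supseteq N_K$ makes each $N_L$-orbit a union of some of the $O_i$, conclude $k \le m$, and transport the inequality back to $X$ and $Y$ via Claim \ref{lem1}.
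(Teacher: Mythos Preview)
Your reduction of the inequality $k\le m$ to the containment $N_K\subseteq N_L$ is exactly the right diagnosis, and you are also right that this is the only nontrivial step. The paper's argument rests on the same unproved assertion: it invokes ``the projection $p\colon {\rm Aut}(Y)\to {\rm Aut}(X)$'' and then a ``preimage'' $\widetilde\Upsilon\in{\rm Aut}(Y)$ of a given $\Upsilon\in{\rm Aut}(X)$, which is precisely the claim that every automorphism of $X$ lifts to one of $Y$, i.e.\ that $N_K\subseteq N_L$. Neither you nor the paper supplies this for an \emph{arbitrary} cover, and in fact it fails. Take $E=E_2$, $K=\langle (3,0),(0,3)\rangle$ and $L=\langle (3,0),(0,15)\rangle\le K$. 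Then $X=E/K$ is the regular map $\{4,4\}_{(3,0)}$ with $m=1$, while for the $5$-sheeted cover $Y=E/L$ the $90^{\circ}$ rotation sends $(3,0)$ to $(0,3)\notin\Lambda_L$, so the point group of $N_L$ is only $\{1,R_x,R_y,-1\}\cong D_2$ and $Y$ has $k=2$ flag orbits. Thus $k>m$ and the lemma, read for arbitrary covers, is false; the containment $N_K\subseteq N_L$ that both proofs need simply does not hold in general.

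Where your write-up is stronger than the paper's is that you isolate the hypothesis that makes the argument work. Your homothety observation $\Lambda_L=m\Lambda_K$ is correct for the specific covers $L=\langle\gamma^{m},\delta^{m}\rangle$ built in the proof of Theorem~\ref{thm-main1}, and for those it immediately gives $S_K\subseteq S_L$, hence $N_K\subseteq N_L$, hence $k\le m$; this is essentially what the paper establishes earlier as Claim~\ref{normal}. So the honest version of the lemma is: if $Y\to X$ is a cover with the property that every automorphism of $X$ lifts to $Y$ (equivalently $N_K\subseteq N_L$), then $k\le m$. The covers constructed in the paper have this property, but a general sublattice $L\le K$ does not. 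If you restrict the statement accordingly, your normalizer-comparison proof is both correct and cleaner than the pigeonhole-plus-lifting argument in the paper.
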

\begin{proof}
Let $k \geq m+1$. Let $O_1, O_2, \dots, O_{m+1}$
be distinct Aut($Y$)-orbits of $E(Y)$. Let $\eta$ be the covering map. Suppose $a_i \in O_i$ for $i = 1,2, \dots m+1$. Then $\eta(a_i) \in E(X) ~\forall i$. Since $E(X)$ has $m$ orbits so by pigeon hole principle there exists $i,j \in \{1,2,3, \dots m+1\}$ such that $\eta(a_i), \eta(a_j)$ are in same Aut($X$) orbits of $E(X)$. Therefore there exists  $\Upsilon \in$ Aut($X$) such that $\Upsilon(\eta(a_i)) = \eta(a_j)$. Let $\widetilde{\Upsilon}\in$ Aut($Y$) be the preimage of $\upsilon$ under the projection $p:{\rm Aut}(Y)\to {\rm Aut}(X)$. If $a_i$ and $a_j$ belongs to same sheet of the covering $Y\longrightarrow X$ then $\widetilde{\Upsilon}(a_i) = a_j$. If $a_i$ and $a_j$ belongs to two different sheet then apply a suitable translation on $a_j$ and get an element $a_j' \in O(a_j)$ such that $a_i$ and $a_j'$ belongs to same sheet. Therefore in both cases $ \exists ~\widetilde{\Upsilon} \in$ Aut($Y$) such that $\widetilde{\Upsilon}(a_i) = a_j$. This is a contradiction to $a_i$ and $a_j$ are in different orbits. This proves Lemma \ref{orbb}.
\end{proof}

\section{Acknowledgements}

Authors are supported by NBHM, DAE (No. 02011/9/2021-NBHM(R.P.)/R$\&$D-II/9101).


{\small

}

\end{document}